
\documentclass[10pt, leqno]{article}
\newenvironment{proof}{\noindent {\bf Proof:}}{$\Box$ \vspace{2 ex}}

\usepackage{amsmath, amssymb, amscd, hyperref}
\usepackage{latexsym, epsfig, color, relsize}
\usepackage[all]{xy}
\usepackage{tikz, verbatim, xcolor, geometry, ntheorem}

\setlength{\topmargin}{-0.2in}
\setlength{\oddsidemargin}{-0.1in}
\setlength{\evensidemargin}{-0.1in}
\setlength{\textwidth}{6.5in}
\setlength{\textheight}{8.6in}

\newcommand{\C}{\mathbb{C}}

\newcommand{\ns}{{\rm{ns}}}
\newcommand{\PGL}{{\rm{PGL}}}
\newcommand{\Tr}{{\rm{Tr}}}
\newcommand{\chr}{{\rm{char}}}
\newcommand{\Z}{\mathbb{Z}}
\renewcommand{\P}{\mathbb{P}}

\newcommand{\R}{\mathbb{R}}
\newcommand{\F}{\mathbb{F}}
\newcommand{\A}{\mathbb{A}}
\newcommand{\cO}{\mathcal{O}}
\newcommand{\co}{\mathcal{O}}

\newcommand{\ra}{\rangle}
\newcommand{\la}{\langle}
\newcommand{\beq}{\begin{equation}}
\newcommand{\eeq}{\end{equation}}
\newcommand{\calO}{\mathcal{O}}

\newcommand\Disc{\operatorname{Disc}}
\newcommand\disc{\operatorname{Disc}}

\newcommand\Stab{\operatorname{Stab}}

\newcommand\SL{\operatorname{SL}}
\newcommand\GL{\operatorname{GL}}
\newcommand\gl{\operatorname{GL}}

\newcommand{\rank}{{\rm rank}}

\DeclareMathOperator{\sym}{Sym}

\newcommand{\pb}[6]
{		\left[
			\begin{smallmatrix}%
				#1&#2&#3\\%
				#4&#5&#6\\%
			\end{smallmatrix}%
		\right]%
}

\def\pt#1#2#3#4#5#6#7#8#9{%
	\def\ArgsTenAndFurther##1##2##3{%
		\left[
			\begin{smallmatrix}%
				#1&#2&#3&#4&#5&#6\\%
				#7&#8&#9&##1&##2&##3%
			\end{smallmatrix}%
		\right]%
	}%
	\ArgsTenAndFurther%
}

\newcommand{\twtw}[4]{[\begin{smallmatrix}#1&#2\\#3&#4\\\end{smallmatrix}]}

\newcommand{\odr}{\co_{D1^2}}		
\newcommand{\ods}{\co_{D11}}		
\newcommand{\odi}{\co_{D2}}		
\newcommand{\odg}{\co_{D\rm{ns}}}	
\newcommand{\ocs}{\co_{C\rm{s}}}	
\newcommand{\ocg}{\co_{C\rm{ns}}}	
\newcommand{\ots}{\co_{B11}}		
\newcommand{\oti}{\co_{B2}}		

\newcommand{\sbs}[2]{W_{[#1,#2]}}
\newcommand{\sby}[2]{Y_{[#1,#2]}}

\newcommand\cF{\mathcal{F}}
\newcommand\Sym{\operatorname{Sym}}

\newtheorem{proposition}{Proposition}
\newtheorem{theorem}[proposition]{Theorem}
\newtheorem{corollary}[proposition]{Corollary}

\newtheorem{lemma}[proposition]{Lemma}
\newtheorem{remark}[proposition]{Remark}

\newtheorem{assmp}{Assumption}

\newtheorem*{theorem-non}{Theorem}


\definecolor{dgreen}{RGB}{0, 170, 0}


\title{Orbital exponential sums for prehomogeneous vector spaces}
    
\author{Takashi Taniguchi and Frank Thorne}

\begin{document}

\maketitle

\begin{center}
	\large{\itshape{Dedicated to Professor Tomohide Terasoma on his sixtieth birthday}}
\end{center}

\begin{abstract}
Let $(G, V)$ be a prehomogeneous vector space, let $\co$
be any $G(\F_q)$-invariant subset of $V(\F_q)$,
and let $\Phi$ be the characteristic function of $\co$. 
In this paper we develop a method for explicitly and efficiently 
evaluating the Fourier transform $\widehat{\Phi}$,
based on combinatorics and linear algebra.
We then carry out these computations in full
for each of five prehomogeneous vector spaces,
including the $12$-dimensional space of
pairs of ternary quadratic forms.
Our computations reveal that these Fourier transforms enjoy a great deal
of structure, and sometimes exhibit more than square root cancellation on average.

These Fourier transforms naturally arise in analytic number theory, where explicit formulas (or upper bounds)
lead to {\itshape sieve level of distribution} results for related arithmetic sequences. 
We describe some examples, and in the companion paper \cite{TT_leveldist}
we develop a new method to do so, designed to exploit the particular structure of these Fourier transforms.
\end{abstract}

\section{Introduction}
Our results are best illustrated by example. Let $V = \Sym^3(2)$ be the space of binary cubic forms,
together with an action of $G := \GL_2$ given by
\beq
(g \circ x)(u, v) = \frac{1}{\det(g)} x((u, v) g).
\eeq
where $x=x(u,v)\in V$ is a binary cubic form.

The pair $(G,V)$ is {\itshape prehomogeneous}: over an algebraically closed field $k$, the action of $G(k)$ on $V(k)$ has
a Zariski open orbit -- here the locus of points $x \in V$ with $\Disc(x) \neq 0$. Any binary cubic form
is determined up to a scalar multiple by its roots in $\P^1(k)$, and prehomogeneity
is equivalent to the fact that $\PGL_2$ acts triply transitively on $\P^1$.

If $k$ is any field with $\chr(k) \neq 3$, then (as we will explain later) there is 
a bilinear form $[-,-] : V(k) \times V(k) \rightarrow k$ for which
$[g x, g^{-T} y] = [x, y]$ for all
$x, y \in V(k)$ and $g \in G(k)$.
Here $g^{-T}\in G(k)$ is the inverse of the transpose of $g$.
If further $k = \F_q$ is a {\itshape finite} field of characteristic $p$ and 
$\Phi: V(\F_q) \rightarrow \C$ is any function,
we define its Fourier transform $\widehat{\Phi} : V(\F_q) \rightarrow \C$ by
\beq\label{eqn:ft_intro}
\widehat{\Phi}(y) = q^{-4} \sum_{x \in V(\F_q)} \Phi(x) \exp\left(2 \pi \sqrt{-1}  \cdot \frac{\Tr_{\F_q/\F_p}([x, y])}{p}\right).
\eeq

The following elementary result is the prototype for the kind of result we are after:
\begin{proposition}\label{thm:Phip}
Let $w_q : V(\F_q) \rightarrow \C$ be the counting function of the number of roots of $x \in V(\F_q)$
in $\P^1(\F_q)$. Then, assuming that $\chr(\F_q) \neq 3$, we have
\begin{equation}\label{eq:Psi_fourier}
\widehat{w_q}(x)=
\begin{cases}
	1+q^{-1}	& x=0,\\
	q^{-1}		& \text{$x \neq 0$ and $x$ has a triple root in $\P^1(\F_q)$},\\
	0		& \text{otherwise}.
\end{cases}
\end{equation}
\end{proposition}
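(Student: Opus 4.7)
The plan is to decompose $w_q$ as a sum of characteristic functions of hyperplanes indexed by $\P^1(\F_q)$, Fourier transform termwise, and identify which $v' \in V(\F_q)$ lies in the annihilator of each hyperplane.

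Writing $w_q = \sum_{P \in \P^1(\F_q)} \charfn_{W_P}$, where $W_P \subset V(\F_q)$ is the $3$-dimensional subspace of binary cubics vanishing at $P$, linearity of the Fourier transform reduces the problem to computing each $\widehat{\charfn_{W_P}}$. The standard orthogonality argument for a linear subspace yields
\[
\widehat{\charfn_{W_P}}(v') = q^{-1}\charfn_{L_P}(v'),
\]
where $L_P := \{v' \in V(\F_q) : [v,v'] = 0 \text{ for all } v \in W_P\}$ is the one-dimensional annihilator of $W_P$ under the bilinear form.

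The main step is to identify $L_P$ for each $P = [\alpha:\beta]$: I claim $L_P$ is spanned by the cube $(\beta u - \alpha v)^3$. I would verify this by writing $[-,-]$ in coordinates (the explicit formula involves a factor of $\tfrac{1}{3}$, which is exactly why $\chr(\F_q)\neq 3$ is needed) and checking that $[(\beta u - \alpha v)^3, f]$ is a nonzero scalar multiple of $f(\alpha,\beta)$, so that this cube indeed annihilates $W_P$. Equivalently, the covariance $[gv, g^{-T}v'] = [v,v']$ makes $P \mapsto L_P$ a $\PGL(2)(\F_q)$-equivariant map from $\P^1$ to lines in $V$, so by transitivity it suffices to verify the identification at a single base point such as $P = [1:0]$, where $L_P$ is spanned by $v^3$. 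I expect this coordinate verification to be the main technical point, though a short one.

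Assembling the three cases is then immediate. We have
\[
\widehat{w_q}(v') = q^{-1}\cdot\#\bigl\{P \in \P^1(\F_q) : v' \in L_P\bigr\}.
\]
If $v' = 0$, every $P$ contributes, yielding $q^{-1}(q+1) = 1 + q^{-1}$. If $v' \neq 0$ is a scalar multiple of some cube of a linear form, exactly one $P$ contributes, namely the projective class of that linear form, yielding $q^{-1}$. Otherwise no $P$ contributes and the value is $0$. Since a nonzero binary cubic has a triple root in $\P^1(\F_q)$ precisely when it is a nonzero scalar multiple of a cube of an $\F_q$-linear form, this matches the stated formula.
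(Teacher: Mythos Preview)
Your approach works, modulo a coordinate slip: for $P=[\alpha:\beta]$ the annihilator $L_P$ is spanned by $(\alpha u+\beta v)^3$, not $(\beta u-\alpha v)^3$; correspondingly at $P=[1:0]$ one gets $L_P=\langle u^3\rangle$, not $\langle v^3\rangle$. With $[x,x']=aa'+bb'/3+cc'/3+dd'$ one finds $[(\alpha u+\beta v)^3,f]=f(\alpha,\beta)$ on the nose, so your own proposed coordinate check would immediately catch and correct this; the rest of the argument is unchanged.

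The paper takes a different route. Rather than transforming $w_q$ directly, it goes via Fourier inversion: it computes $\widehat{\Phi_q}$ for $\Phi_q$ the characteristic function of the triple-root orbit $(1^3)$, by writing $\Phi_q$ as an $\SL_2(\F_q)$-average of point indicators along the orbit of $(t,0,0,0)$, passing the group action across the pairing via $[gv,g^{-T}v']=[v,v']$, and reducing to the count of $g\in\SL_2(\F_q)$ for which $[1:0]$ is a root of $g^Ty$. This yields $q^4\widehat{\Phi_q}(y)=qw_q(y)-(q+1)$, from which the proposition follows. Your argument is more self-contained for this particular statement, needing only the subspace Fourier formula and the scalar identity $[(\alpha u+\beta v)^3,f]=f(\alpha,\beta)$, with no appeal to the orbit structure. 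The paper's argument is deliberately chosen to illustrate the orbit-averaging and subspace-counting mechanism (Proposition~\ref{prop:A}) that powers all of the later computations, and so is the version that generalizes to the other $(G,V)$ treated there.
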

Work of Davenport and Heilbronn \cite{DH} connected this $(G,V)$ to counting problems involving
cubic fields and $3$-torsion in the class groups of quadratic fields, 
and 
formulas of a similar shape to \eqref{eq:Psi_fourier} appeared in subsequent works including the following:
\begin{enumerate}
\item
The function $w_p(x)$ appears in Bhargava, Shankar, and Tsimerman's
\cite[(80)-(83)]{BST} proof of negative secondary terms
in the Davenport-Heilbronn theorem.
These authors proved and applied the weaker result
\cite[(80)-(83)]{BST} that $|\widehat{w_p}(x)| \ll p^{-1}$ for all $x \neq 0$.
\item
Related functions, defined over $V(\Z/p^2\Z)$, appear in the authors' \cite{TT_L, TT_rc} (independent) proof of these same secondary terms.
Coarse bounds did not suffice for our methods, and
we obtained exact formulas, but by rather laborious methods.
\item
As we will shortly describe, 
Belabas and Fouvry proved \cite[Corollaire 2]{BF} that there are
infinitely many cubic fields whose discriminant is fundamental and divisible by at most $7$ prime factors. They relied
on similar exponential sum estimates which are proved in \cite[Section 3]{BF}.

\end{enumerate}
Our aim was to develop a simple and generalizable method for proving exact formulas of this shape. In this paper,
we will describe our method and compute
the Fourier transforms of the characteristic functions of each
of the $G(\F_q)$-orbits on each of the following prehomogeneous vector spaces:
\begin{itemize}
\item
$V = \Sym^3(2)$, the space of binary cubic forms; $G = \GL_2$.
\item
$V = \Sym^2(2)$, the space of binary quadratic forms; $G = \GL_1 \times \GL_2$.
\item
$V = \Sym^2(3)$, the space of ternary quadratic forms; $G = \GL_1 \times \GL_3$.
\item
$V = 2\otimes \Sym^2(2)$, the space of pairs of binary quadratic forms; $G = \GL_2 \times \GL_2$.
\item
$V = 2\otimes \Sym^2(3)$, the space of pairs of ternary quadratic forms; $G = \GL_2 \times \GL_3$.
\end{itemize}
We thus obtain Fourier transform formulas for any $G$-invariant function $\Phi$, i.e. one
which satisfies $\Phi(g x) = \Phi(x)$ for all $g \in G(\F_q)$ and $x \in V(\F_q)$.
We exclude finitely many field characteristics in each case,
but otherwise our results are completely general.
Our formulas for $(G,V)$ above are respectively
given in Theorems \ref{thm:sym32-A}, \ref{thm:sym22-A},
\ref{thm:sym23-A}, \ref{thm:2sym22-A} and \ref{thm:2sym23-A}.
We have worked out some additional cases as well,
but we leave the details for subsequent papers.

A sample result (in addition to the simpler Proposition \ref{thm:Phip}) is as follows:

\begin{theorem}\label{thm:ternary}
For a finite field $\F_q$ of characteristic not equal to $2$,
let $V(\F_q) := \F_q^2 \otimes \Sym^2(\F_q^3)$ be the space of pairs of ternary quadratic forms, and
write $\Psi_q\colon V(\F_q)\rightarrow\{0,1\}$ for
the characteristic function of those $x \in V(\F_q)$ which are singular.

Then, we have
\begin{equation}\label{eq:Psi_fourier_ternary}
\widehat{\Psi_q}(x)=
\begin{cases}
q^{-1} + 2q^{-2} - q^{-3} - 2 q^{-4} - q^{-5} + 2q^{-6} + q^{-7} - q^{-8}
	& x \in \co_0,\\
q^{-3} - q^{-4} - 2q^{-5} + 2q^{-6} + q^{-7} - q^{-8}
	& x \in \odr,\\
2q^{-4} - 5q^{-5} + 3q^{-6} + q^{-7} - q^{-8}
	& x \in \ods,\\
q^{-4} - 3q^{-5} + 2q^{-6} + q^{-7} - q^{-8}
	& x \in \ocs,\\
- q^{-5} + q^{-6} + q^{-7} - q^{-8}
	& x \in \odi, \odg, \ocg, \ots, \oti,\\
-q^{-6} + 2q^{-7} - q^{-8}
	& x \in \cO_{1^2 1^2}, \\
q^{-6} - q^{-8}
	& x \in \cO_{2^2}, \\
q^{-7} - q^{-8}
	& x \in \co_{1^4}, \co_{1^3 1}, \co_{1^2 11}, \co_{1^2 2}, \\
-q^{-8}
	& x \in \cO_{1111}, \cO_{112}, \cO_{22}, \cO_{13}, \cO_{4}.
\end{cases}
\end{equation}
\end{theorem}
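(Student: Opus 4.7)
The plan is to apply the general method developed in the earlier sections of this paper to the $12$-dimensional prehomogeneous vector space $V = 2\otimes \Sym^2(3)$ with $G = \GL_2 \times \GL_3$. Since $\Psi_q$ is $G(\F_q)$-invariant, its Fourier transform $\widehat{\Psi_q}$ is invariant under the dual $G(\F_q)$-action coming from the intertwining identity $[gv,g^{-T}v']=[v,v']$, so $\widehat{\Psi_q}$ is constant on each $G(\F_q)$-orbit. It therefore suffices to evaluate $\widehat{\Psi_q}$ at one representative from each orbit appearing in \eqref{eq:Psi_fourier_ternary}.

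First I would invoke the classification of $G(\F_q)$-orbits on $V(\F_q)$ and the meaning of the orbit labels $\co_0, \odr, \ods, \ldots, \co_4$ that the paper has already set up. A pair $x=(x_1,x_2)$ corresponds to the pencil of conics $\{sx_1+tx_2\}\subset \P^2$, whose discriminant $\det(sx_1+tx_2)$ is a binary cubic form encoding the degenerate members. The orbits are classified by Segre-type data of the pencil (the root pattern of this cubic together with rank profiles of the degenerate members and of the base locus), and the set $\{x: \Psi_q(x)=1\}$ is the union of those orbits for which the discriminant cubic has a repeated root or vanishes identically.

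Second, I would decompose $\Psi_q = \sum_{\co\text{ singular}} \charfn_\co$ and use linearity to write $\widehat{\Psi_q} = \sum_\co \widehat{\charfn_\co}$. The method of this paper supplies, for each orbit $\co$ and representative $v'$, a formula for $\widehat{\charfn_\co}(v')$ obtained by stratifying the orbit sum according to incidence data between $v$ and $v'$ --- ranks of the specializations $sx_1+tx_2$ for $[s:t]$ in the base locus of $v'$, intersection patterns of the associated linear subspaces, and so on --- and evaluating the resulting character contributions via elementary linear algebra on smaller subspaces. Choosing normal forms for the orbit representatives factors each summation into products of simpler sums over subspaces, quadrics, and pencils of lower rank, most of which have already been evaluated in the preceding sections of the paper when treating $\Sym^2(2)$, $\Sym^2(3)$, and $2\otimes\Sym^2(2)$. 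The striking coincidences in \eqref{eq:Psi_fourier_ternary} --- for instance the common value $-q^{-5}+q^{-6}+q^{-7}-q^{-8}$ shared by five distinct orbits, and $-q^{-8}$ shared by another five --- signal that many $(\co,v')$ pairings yield equivalent incidence combinatorics, which I would exploit to consolidate the computations.

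Third, I would run through the remaining $(\co,v')$ pairings orbit-by-orbit. The main obstacle is the combinatorial volume: although each individual evaluation is routine in light of the general method, the full table of contributions must all be correct, and the orbits with the most degenerate normal forms (such as $\co_0$ and $\odr$, where the Fourier values show the most structure) generate a proliferation of incidence strata whose stabilizers must be tracked carefully. I would cross-check the final table against the Fourier inversion identity $\sum_{v'\in V(\F_q)}\widehat{\Psi_q}(v')=\Psi_q(0)=1$, expanded as a weighted sum over orbit sizes (each orbit cardinality being a polynomial in $q$ read off from its stabilizer), which provides a stringent consistency check linking all entries of \eqref{eq:Psi_fourier_ternary}.
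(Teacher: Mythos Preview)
Your proposal misidentifies the method the paper actually uses. You describe computing each $\widehat{\charfn_\co}(v')$ by ``stratifying the orbit sum according to incidence data between $v$ and $v'$'' --- ranks of specializations in the pencil, intersection patterns, and so on. That is a direct-evaluation approach in the spirit of the ad hoc argument for Proposition~\ref{thm:Phip} in the introduction, but it is precisely what the paper's systematic method is designed to \emph{avoid}. Carrying it out here would mean handling on the order of $15\times 20$ separate exponential sums over a $12$-dimensional space, and nothing in the earlier sections is set up to feed into such a computation.

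What the paper actually does is the subspace-counting method of Section~\ref{sec:setup}. One chooses many coordinate subspaces $W\subset V$ (here the $W_{[i,j]}$ and a handful of auxiliary $W_1,\dots,W_7$), computes the integer vectors $(|W\cap\co_i|)_i$ and $(|W^\perp\cap\co_i|)_i$ by elementary combinatorics, and then invokes Proposition~\ref{prop:A}: each such $W$ yields a linear relation among the $\widehat{e_i}$. Once these vectors span $\R^{20}$, the full $20\times 20$ matrix $M$ with $\widehat{e_j}=\sum_i a_{ij}e_i$ is determined by linear algebra (Theorem~\ref{thm:2sym23-A}). Theorem~\ref{thm:ternary} is then immediate: $\Psi_q$ is the sum of $e_i$ over the fifteen singular orbits, so $\widehat{\Psi_q}$ is the corresponding sum of columns of $M$. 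The earlier sections on $\Sym^2(2)$, $\Sym^2(3)$, $2\otimes\Sym^2(2)$ enter not as Fourier evaluations but as orbit counts for the smaller $W_{[i,j]}$ via the obvious embeddings. Your Fourier-inversion cross-check is reasonable, but the paper instead verifies $M$ against Lemma~\ref{lem:A} (the symmetry $|\co_i|a_{ij}=|\co_j|a_{ji}$ and $M^2=|V|^{-1}E_r$).
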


The sets on the right are defined, and their cardinalities computed, in Proposition \ref{prop:orbit-2sym23}.
An element $x \in V(\F_q)$ is {\itshape singular} if it belongs to any of the orbits listed before the last line; 
see the introduction to Section \ref{sec:pairs_ternary} for a more intrinsic definition.  

We can see by comparing the above result to Proposition \ref{prop:orbit-2sym23}
that the sizes of $\widehat{\Psi_q}(x)$ and the orbits $\cO$ containing $x$ are approximately inversely correlated.
%
In particular, on average, we obtain better than square root cancellation:
\begin{corollary}\label{cor:l1} We have the $L_1$-norm bound
\[
\sum_{x \in V(\F_q)} |\widehat{\Psi_q}(x)| \ll q^4.
\]
\end{corollary}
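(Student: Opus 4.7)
The plan is to evaluate the $L_1$-norm orbit by orbit using Theorem~\ref{thm:ternary}. Since $\Psi_q$ is $G(\F_q)$-invariant and the pairing $[\cdot,\cdot]$ transforms under $(g,g^{-T})$, the Fourier transform $\widehat{\Psi_q}$ is itself constant on each $G(\F_q)$-orbit $\cO \subset V(\F_q)$; denote this common value by $\alpha_\cO$. Then
\[
\sum_{x \in V(\F_q)} \bigl|\widehat{\Psi_q}(x)\bigr| \;=\; \sum_{\cO} |\cO|\cdot |\alpha_\cO|,
\]
and since the number of orbits is bounded independently of $q$, it suffices to show that each summand is $O(q^4)$.

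The driving observation, already flagged in the paragraph preceding the corollary, is that orbit size $|\cO|$ and $|\alpha_\cO|$ are inversely correlated: the larger the orbit, the smaller $|\widehat{\Psi_q}|$ on it. Inspecting \eqref{eq:Psi_fourier_ternary} together with the orbit cardinalities given by Proposition~\ref{prop:orbit-2sym23}, this correlation turns out to be strong enough that $|\cO| \cdot |\alpha_\cO| = O(q^4)$ holds uniformly.

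Concretely I would walk down the list in \eqref{eq:Psi_fourier_ternary}. The non-singular orbits $\cO_{1111}, \cO_{112}, \cO_{22}, \cO_{13}, \cO_{4}$ together have cardinality $O(q^{12})$ and $|\alpha_\cO| = q^{-8}$, contributing $O(q^4)$. The $D$-subscripted orbits, noted in the text as having total cardinality $O(q^7)$, pair with $|\alpha_\cO| = O(q^{-3})$ to give $O(q^4)$ again. The intermediate strata $\co_{1^4}, \co_{1^3 1}, \co_{1^2 11}, \co_{1^2 2}$ and $\cO_{1^2 1^2}, \cO_{2^2}$ satisfy analogous estimates; at the other extreme, the zero orbit $\co_0 = \{0\}$ contributes only $O(q^{-1})$.

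The only substantive obstacle is bookkeeping: verifying for each of the roughly sixteen orbit strata in \eqref{eq:Psi_fourier_ternary} that the codimension of $\cO$ in $V$, as recorded in Proposition~\ref{prop:orbit-2sym23}, is large enough that $|\cO| \cdot |\alpha_\cO| \ll q^4$. This is routine once the cardinalities of Proposition~\ref{prop:orbit-2sym23} are on hand, and it is precisely the phenomenon of ``better than square root cancellation'' highlighted before the statement of the corollary.
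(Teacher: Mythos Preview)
Your proposal is correct and is exactly the argument the paper has in mind: the corollary is stated without proof immediately after Theorem~\ref{thm:ternary}, as a trivial consequence of pairing the values in \eqref{eq:Psi_fourier_ternary} with the orbit sizes of Proposition~\ref{prop:orbit-2sym23} (the paper later calls such $L_1$-bounds ``trivially obtained as consequences'' of these formulas). Your orbit-by-orbit bookkeeping is the intended verification, and each product $|\cO|\cdot|\alpha_{\cO}|$ indeed lands at $O(q^4)$ or below.
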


\medskip
We mention two other papers to which our results are related: 

\begin{enumerate}

\item
In an important paper \cite{FK}, Fouvry and Katz obtained {\itshape upper bounds} 
for related exponential sums in a much more
general context. As a special case, let $Y$ be a (locally closed) subscheme 
of $\A_{\Z}^n$, and consider the exponential sum
\eqref{eqn:ft_intro} with $V = \A_{\Z}$, $q = p$ prime, and 
$\Phi$ the characteristic function of $Y(\F_p)$.
Fouvry and Katz
produce a filtration of
subschemes $\A_\Z^n \supseteq X_1 \supseteq \cdots \supseteq X_j \supseteq \cdots \supseteq X_n$ of 
increasing codimension, so that successively weaker upper bounds hold on each
$(\A_\Z^n - X_j)(\F_p)$.
Our Proposition \ref{thm:Phip} and Theorem \ref{thm:ternary} illustrate a similar structure,
with substantially smaller values than the general bounds proved by Fouvry-Katz.

As an interesting application (\cite[Corollary 1.3]{FK}), they prove that there are infinitely many
primes $p \equiv 1 \pmod 4$ for which $p + 4$ is squarefree and not the discriminant of a cubic field.

\item
Denef and Gyoja \cite{DG} studied the sum \eqref{eqn:ft_intro},
in the not necessarily $G$-invariant case defined by
$\Phi(x) = \chi(\Disc(x))$, where $\chi$ is a nontrivial
Dirichlet character modulo $p$, so that $\Phi$ is relatively invariant and supported on the nonsingular orbits.
In this setting, Denef and Gyoja proved
that the Fourier transform of $\Phi(x)$ is equal to
$\chi^{-1}(\Disc(x))$
times a factor
independent of $x$.
Their result has a shape reminiscent to that of Sato's \cite{sato} fundamental
theorem of prehomogeneous vector spaces (over $\C$).

In some cases where $\chi$ is of small order their result excludes singular $v$,
depending on the Sato-Bernstein polynomial of $(G, V)$. When the quadratic character
does define a $G$-invariant function we can recover these cases of their results. The case $\Sym^2(2)$ is particularly interesting, as Denef-Gyoja
does not imply that $\widehat{\Phi}(x) = 0$ for singular $x$.
Indeed, as they discuss in \cite[Remark 5.2.3.3]{DG}, 
Parseval's formula determines whether or not $\widehat{\Phi}$ vanishes on the singular set, and 
in Remark \ref{rem:sym22} we give an alternate proof that it does not. Moreover, $2 \otimes \Sym^2(2)$ provides another such example.

\end{enumerate}

Both of these papers are quite long and invoke the machinery
of sheaf cohomology. Our methods are much simpler,
as we will demonstrate by now giving a complete 
proof of Proposition \ref{thm:Phip}. It would be interesting
to further investigate our results from a cohomological point of view; 
for example, is there a simple reason why the expressions
in \eqref{eq:Psi_fourier_ternary} and Theorem \ref{thm:2sym23-A}
 are fixed polynomials in $q$, while those in Theorem \ref{thm:sym32-A}
depend on $q \pmod{3}$? And are there simple cohomological interpretations
for these polynomials? We leave such questions for further work.
 
 \medskip
To prove  Proposition \ref{thm:Phip}, 
write $\langle n \rangle := \exp(2 \pi\sqrt{-1}\cdot \Tr_{\F_q/\F_p}(n) / p)$, and write
$\Phi_q$ for the characteristic function of the orbit $(1^3)$: those nonzero elements of $V(\F_q)$ which have a triple root.
By Fourier inversion, it suffices to compute the Fourier transform of the right side of \eqref{eq:Psi_fourier}, and thus to 
compute $\widehat{\Phi_q}$.

Using the facts that $(1^3)$ is a single $\GL_2(\F_q)$-orbit,
and that our bilinear form (defined by \eqref{eq:sym32-bilinear-form})
satisfies the $\SL_2(\F_q)$-invariance property $[gx,g^{-T}y]=[x,y]$
(where $g^{-T}$ is the inverse transpose of $g$),
we compute that
\begin{align*}
q^4 \widehat{\Phi_q}(y) = 
\frac{1}{q^2 - q} \sum_{g \in \SL_2(\F_q)} \sum_{t \in \F_q^{\times}} \la [ g \cdot (t, 0, 0, 0), y ] \ra
= & \frac{1}{q^2 - q} \sum_{g \in \SL_2(\F_q)} \sum_{t \in \F_q^{\times}} \la [ (t, 0, 0, 0), g^{T} y ] \ra
\end{align*}
The inner sum is equal to $q - 1$ if $[1 : 0] \in\P^1(\F_q)$ is a root of $g^Ty$, and $-1$ if it is not.
For each root $\alpha$ of $y$, counted with multiplicity, $[1 : 0]$ will be a root of $g^T y$ for $\frac{|\SL_2(\F_q)|}{q + 1} = q^2 - q$ elements $g
\in \SL_2(\F_q)$, so that
\begin{align*}
q^4 \widehat{\Phi_q}(y) = 
& \frac{1}{q^2 - q} \cdot (q^2 - q) \cdot \bigg(q w_q(y)  - (q + 1) \bigg).
\end{align*} 
Proposition \ref{thm:Phip} now follows easily.

Similar ideas can easily be applied to compute characteristic functions of other orbits. Generally, the idea
is to consider subspaces $W \subseteq V$ defined by the vanishing of some of the coordinates (whose orthogonal complements
are also so defined); as in the above example, the number of elements in $W \cap \calO$ and $W^{\perp} \cap \calO$ 
for various $G$-orbits $\calO$ can be computed via elementary geometric considerations, and these counts determine the Fourier transforms.
This is the basic principle which we will develop and apply to obtain all of our Fourier transform formulas.

\medskip

{\bf What is required for the method to work?} Here we offer some thoughts as to what conditions one must demand of a space 
$(G, V)$ to make this method work. 
These comments may be considered somewhat preliminary, as we hope to extend the scope of our method
in subsequent work. 
\begin{itemize}
\item
The $G$-orbits must be scale-invariant.
(In some cases we incorporate a $\GL_1$ factor
into the action to ensure this.)
\item
We assume here that the $G(\F_q)$-orbits on $V(\F_q)$ can be
described uniformly in $q$, and that their number does not depend on $q$.
This is not an absolute requirement; indeed, one may obtain
results for the space ${\rm M}_2(\F_q)$ of $2 \times 2$ matrices, together
with the action of $\GL_2(\F_q)$ by left multiplication
(which has $\gg q$ orbits) with the method described in this paper.
\item
Most significantly, there must be `many' subspaces $W$ for which
we can count the intersections of $W$ and $W^\perp$ with the $G$-orbits,
and we must obtain a suitably large variety of different counting functions.
(See Proposition \ref{prop:A}.) We verified this for each of the five
$(G, V)$ studied here by checking cases; we are not yet aware of any
{\itshape a priori} reason why this should necessarily be true.
\end{itemize}
In a monumental work \cite{saki}, Sato and Kimura classified all irreducible ``reduced'' prehomogeneous vector spaces. There are $36$ of them, including a few infinite families. 
It seems that our method is applicable to (at least) most of those cases to get exact results. Ishitsuka, Ishimoto and the first author are confirming this for more than $10$ more cases, 
including some infinite families such as spaces of $n\times m$ matrices (for any $n$ and $m$) and of alternating forms of general degree (see e.g., \cite{ishimoto-cubic,ishimoto-quadratic,ishi-tani}). We hope that this topic will be pursued further in the near future.

\medskip
{\bf Sieve Applications.} Typically (and in the papers described above), exponential sum bounds lead to {\itshape level of distribution estimates},
which in turn lead to sieve applications. Typically a sieve involves the following:
\begin{itemize}
\item
A set of objects being sieved. For example, with any of the $(G, V)$ studied in this paper, one might consider
the set
of $G(\Z)$-equivalence classes of $x \in V(\Z)$ with $0 < |\Disc(x)| < X$.
\item
For each prime $p$, a notion of an object being `bad at $p$'. Often this means simply that $p \mid \Disc(x)$.
In works \cite{BBP, B_quartic, B_quintic, BST, DH, ST, TT_rc}
on counting number fields, `bad at $p$' is taken to mean that the ring $R$ corresponding to $x$ is nonmaximal at $p$,
i.e. that $R \otimes_{\Z} \Z_p$ is nonmaximal as a cubic ring over $\Z_p$.
\end{itemize}

A typical aim of sieve methods is to fix a large set of primes $\mathcal{P}$ and estimate the number of objects $x$
which are not bad at any $p \in \mathcal{P}$. To carry this out one generally needs,
for squarefree integers $q$, estimates for the number of $x$ bad at each prime divisor of $q$.
Loosely speaking, we say that our sieve 
has {\itshape level of distribution $\alpha > 0$} if we can usefully bound the sum over $q < X^{\alpha}$ of the resulting error terms.

Positive levels of distribution for the nonmaximal definition of `bad' have led
to power saving error terms in certain counting functions for maximal orders, and thus also for the number fields
containing them. Sieving for divisibility leads instead to estimates for almost-prime discriminants of number fields, and 
in our companion paper \cite{TT_leveldist} we obtain such an application:
\begin{theorem}\label{thm:ap}
There is an absolute constant $C > 0$ such that for each $X > 0$,
there exist $\geq (C + o_X(1)) \frac{X}{\log X}$ quartic fields $K$
whose discriminant is fundamental, bounded above by $X$,
and has at most $8$ prime factors.
\end{theorem}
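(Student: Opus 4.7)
The plan is to combine Bhargava's parametrization of quartic rings with a weighted sieve, using the level of distribution that follows from the explicit Fourier transform formulas (Theorem \ref{thm:ternary}) as input. First I would identify isomorphism classes of quartic fields $K$ with $|\Disc(K)| < X$ with $G(\Z)$-equivalence classes of suitable $x \in V(\Z) = \Z^2 \otimes \Sym^2(\Z^3)$, namely those corresponding to maximal orders with irreducible cubic resolvent. Bhargava's work provides an asymptotic of the form $cX + o(X)$ for the total count with $|\Disc(x)| < X$, and the constraint that the discriminant is \emph{fundamental} amounts to a nonmaximality sieve at each prime $p$ whose local densities are controlled by the orbital data $\co_0, \odr, \ods, \ldots$ appearing in Theorem \ref{thm:ternary}.

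Next I would apply a weighted sieve (in the style of Richert, or Diamond--Halberstam--Richert) to bound below the number of such orbits whose discriminant has at most $8$ prime factors. The key analytic ingredient is a level-of-distribution estimate: for squarefree $q$ up to $X^{\alpha}$, one needs to show that the number of $G(\Z)$-orbits of $x \in V(\Z)$ with $|\Disc(x)| < X$ lying in a prescribed $G(\Z/q\Z)$-invariant subset of $V(\Z/q\Z)$ agrees with its expected main term, with total error $o(X/\log X)$. This is precisely where the Fourier transforms computed in this paper enter: Poisson summation over $q\Z^{12} \subseteq V(\Z)$ converts the sieve sum into a weighted sum of values of $\widehat{\Psi_q}$ (and analogous transforms at prime powers), and the $L^1$-bound of Corollary \ref{cor:l1} bounds the total contribution of these Fourier transforms.

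The main obstacle is pushing the level of distribution $\alpha$ high enough that the weighted sieve produces $8$ prime factors rather than, say, $10$. A naive Poisson argument loses too much, but the explicit formulas of Theorem \ref{thm:ternary} reveal that $|\widehat{\Psi_q}(x)|$ and the size of the orbit containing $x$ are inversely correlated, giving better than square root cancellation on average. The new method of \cite{TT_leveldist} is designed to exploit exactly this structure, summing over orbits in a way that preserves the cancellation. Once a sufficient level of distribution is established, the remaining steps are standard in sieve theory: compute the sieve dimension $\kappa$ from the local densities of singular orbits in $V(\F_p)$ (these are the $G(\F_p)$-orbit sizes read off from Proposition \ref{prop:orbit-2sym23}), feed $\kappa$ and $\alpha$ into the weighted sieve machinery to obtain a positive lower bound with the allowed number of prime factors, and subtract the contribution from reducible orbits (i.e., quartic rings that are not fields), which is of lower order by the usual Bhargava-type tail estimates.
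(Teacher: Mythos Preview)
The paper does not actually prove this theorem here: it is stated in the introduction as an application, with the proof deferred entirely to the companion paper \cite{TT_leveldist}. So there is no ``paper's own proof'' to compare against beyond the informal discussion surrounding the theorem statement.

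That said, your outline is broadly consistent with what the paper describes as the method. The paper explicitly says that the $L^1$-norm bound of Corollary~\ref{cor:l1} should heuristically yield a corresponding level of distribution, that \cite{TT_leveldist} develops a geometric method to approach this heuristic, and that the approach is closely related to Belabas--Fouvry's weighted sieve for cubic fields. Your sketch (Bhargava parametrization, Poisson summation with the explicit $\widehat{\Psi_q}$ as input, exploiting the inverse correlation between $|\widehat{\Psi_q}(x)|$ and orbit size, then a weighted sieve) captures exactly this storyline.

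One caution: several of your steps are genuinely nontrivial and not just ``standard''. In particular, the passage from the $L^1$ bound to an actual level of distribution is the whole content of \cite{TT_leveldist}; a naive Poisson argument over $V(\Z)$ does not immediately give what is needed because one must handle the geometry of the fundamental domain for $G(\Z)\backslash V(\R)$, cusp contributions, and smoothing, and the paper stresses that a new geometric method is required. Also, the sieve here is not simply for ``at most $8$ prime factors of $\Disc(x)$'' but for discriminants that are simultaneously fundamental and almost-prime, so the local conditions are more delicate than a single nonmaximality sieve. Your proposal is a correct high-level roadmap, but it is really a summary of what \cite{TT_leveldist} must accomplish rather than a self-contained proof.
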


Our methods are closely related to those of Belabas and Fouvry \cite{BF}, who proved \cite[Corollaire 2]{BF} the same for {\itshape cubic} fields.
(They formulated their result in terms of the $2$-torsion in the class group of the quadratic resolvent; see (1.1) of their paper.) They pointed out
that a weighted sieve would reduce their $7$ to $4$,
and we will separately recover their result and
further reduce this $4$ to $3$.

The basic heuristic of \cite{TT_leveldist} is that $L_1$-norm bounds such as
Corollary \ref{cor:l1},
trivially obtained as consequences of the results of this paper,
should lead to corresponding levels of distribution, 
and in \cite{TT_leveldist} we develop a geometric method which approaches this heuristic.
Most of our work in \cite{TT_leveldist} is carried out in a general setting, adaptable to other representations $(G, V)$
and to other sieve applications.

\medskip
{\bf Zeta functions.} Another motivation for our work is that the exponential sums being studied 
arise as coefficients of the functional equations of the associated Sato-Shintani zeta functions. These zeta functions can be used to
prove sieve estimates (see e.g. \cite{TT_L, TT_rc}),
and are also of intrinsic interest
-- especially when it can then be proved that the functional equations
assume a particularly nice form. In Corollary \ref{cor:ohno-nakagawa} (of this paper) we present an application of this type.

\medskip

\begin{remark}
A related computation was subsequently carried out by Hough \cite{hough_quartic_expo}.
Let $V = V(\Z/p^2 \Z) = 2 \otimes \Sym^2(3)$ be the ``space'' of pairs of ternary quadratic forms 
with coefficients in $\Z/p^2\Z$, and let $\Psi_{p^2} \ : V(\Z/p^2\Z) \rightarrow \{ 0, 1 \}$
be the characteristic function of those which are ``nonmaximal at $p$''. Then, for $p > 3$, Hough computes the 
Fourier transform of $\Psi_{p^2}$. 

Hough's computation should yield improved results concerning the distribution of quartic fields, for example to his 
results  \cite{hough_quartic_shape} obtaining a quantitative equidistribution result for their shape.

\end{remark}

{\bf Organization of the paper.} 
We begin in Section \ref{sec:setup} by giving the necessary background and assumptions. Our method
applies to any $\F_q$-linear representation $V$ of a finite group $G$, for which there exists a symmetric bilinear form which behaves
nicely (see Assumption \ref{assmp:dual-identify}) with respect to the $G$-action.  {\itshape There is no assumption that $(G, V)$ 
is prehomogeneous}, although our method was designed to exploit features
typical for prehomogeneous vector spaces.

We define (see \eqref{eq:A}) a matrix $M$ which carries all the necessary information concerning our Fourier transforms. We then prove
Proposition \ref{prop:A}, our main technical input, and explain how it 
reduces the problem of determining $M$ to the combinatorial task of counting $W \cap \calO_i$ for all $G(\F_q)$-orbits $\calO_i$ on $V$,
for a large enough number of subspaces $W$.

In the next five sections we treat the prehomogeneous representations $\Sym^3(2)$, $\Sym^2(2)$, $\Sym^2(3)$, 
$2 \otimes \Sym^2(2)$, and $2 \otimes \Sym^2(3)$ in turn. We describe each representation, determine the $G(\F_q)$-orbits
on $V(\F_q)$ (in each case excluding a `bad' characteristic), carry out the combinatorial problem described above, and determine the
matrix $M$. In the latter three cases embeddings of the previously considered representations will be relevant, 
so that these sections are not independent of the previous ones.
We assume $\chr(\F_q)\neq3$ for $\Sym^3(2)$ and
$\chr(\F_q)\neq2$ for the latter four representations.

Finally, in Appendix \ref{appendix:ibf} we explain why bilinear forms satisfying Assumption \ref{assmp:dual-identify} exist
in a general setting which includes each of the five $(G, V)$ treated here. This seems to be `well known', 
but the constructions are usually presented without proof 
in the related literature and we hope that a complete presentation will be useful to the reader.

\medskip
{\bf Notation.}
For the convenience of the reader we describe some commonly used notation.
We work with a finite field $\F_q$ of characteristic $p$,
and $V$ will always denote a finite dimensional $\F_q$-linear representation
of a finite group $G$. The $G$-orbits on $\F_q$ will be labeled either
$\co_1$, $\co_2$, etc. when we emphasize their ordering with respect
to the matrix $M$, or using descriptive labels such as
$\co_{D1^2}$ and $\co_{1111}$ when we emphasize their arithmetic properties.
These labels will be introduced separately in each section.

If $g\in\gl_n$, then
$g^{-T}\in\gl_n$ is the inverse of the transpose of $g$, and
if $g=(g_1,\dots,g_r)\in\gl_{n_1}\times\dots\times\gl_{n_r}$,
we write $g^{-T}=(g_1^{-T},\dots,g_r^{-T})$.

Some additional notation used in our orbit counting (e.g., $W_{[i, j]}$, $W_{[i, j]}^{\times}$, $Y$)
is introduced and explained in Section \ref{sec:2sym22}.

\section{The basic setup}\label{sec:setup}
In this section we formalize our method.
We start by describing the common features of our
representation over $\F_q$
which are necessary to make the method work;
we then formulate several basic results which we apply
in the course of our proofs.
Specifically, our aim in this section is to
establish Proposition \ref{prop:A},
which is the primary tool for our exponential sum computations.

Let $V$ be a (finite dimensional) vector space over $\F_q$.
Let $V^\ast$
be the dual space,
i.e., the set of linear forms on $V$.
For $x\in V$ and $y\in V^\ast$,
we write $[x,y]:=y(x)\in\F_q$ for the natural pairing 
between $V$ and $V^\ast$.
For $x\in V$ and $y\in V^\ast$, let
\begin{equation}
\langle x,y\rangle:=
\exp\left(2\pi\sqrt{-1}\cdot \frac{{\rm Tr}_{\F_q/\F_p}([x,y])}{p}\right)
\in\C_1^\times.
\end{equation}
Here
$\C_1^\times=\{z\in\C^\times\mid |z|=1\}$.
Then $V\ni x\mapsto\langle x,y\rangle\in\C_1^\times$
is a group homomorphism.
The basic underlying principle for the present
(finite) Fourier analysis is that $V^\ast$
is canonically identified with
the group of additive characters
on $V$, via
$
V^\ast\ni
y\mapsto
\langle\cdot,y\rangle
\in{\rm Hom}(V,\C_1^\times).
$
Let $\cF_V$ and $\cF_{V^\ast}$ be the space of
$\C$-valued functions on $V$ and $V^\ast$, respectively.
There are special $\C$-linear isomorphisms between them;
the Fourier transforms
\[
\cF_V\ni \phi\longmapsto\widehat\phi\in\cF_{V^\ast};
\qquad
\widehat{\phi}(y)=|V|^{-1}\sum_{x\in V}\phi(x)
\langle x,y\rangle
\]
and
\[
\cF_{V^\ast}\ni \psi\longmapsto\widehat\psi\in\cF_V;
\qquad
\widehat\psi(x)=|V|^{-1}\sum_{y\in V^\ast}\psi(y)
\langle x,y\rangle.
\]
By the orthogonality relation
we have $\widehat{\widehat\phi\,}(x)=|V|^{-1}\phi(-x)$,
which is the Fourier inversion formula in this case.
For a subspace $W$ of $V$,
let $W^\bot\subset V^\ast$ be the subspace of its annihilators
in the dual space.
Let $\phi_W$ and $\psi_{W^\bot}$ respectively be their indicator functions.
It is easy to see that
\begin{equation}\label{eq:subspace-FT}
\widehat{\phi_W}=\frac{|W|}{|V|}\cdot \psi_{W^\bot}.
\end{equation}

If a finite group $G$ acts on $V$,
then the action is naturally inherited by $\cF_V$:
for $g\in G$ and $\phi\in\cF_V$, defining
$g\phi\in\cF_V$ by $(g\phi)(x)=\phi(g^{-1}x)$
defines a $\C$-linear representation of $G$.
Let $\cF_V^G\subset \cF_V$
be the subspace of $G$-invariant functions on $V$.
If $\co_1,\dots,\co_r$ are all the distinct $G$-orbits in $V$,
then their respective indicator
functions $e_1,\dots,e_r\in \cF_V^G$
form a basis of $\cF_V^G$.
As is common, the averaging operator
\[
{\rm av}\colon \cF_V\longrightarrow\cF_V^G;
\qquad
\phi\longmapsto
{\rm av}(\phi):=
|G|^{-1}\sum_{g\in G}g\phi
\]
is useful for our analysis.
If $\phi_x$ is the indicator function of a point $x\in V$,
then
$
{\rm av}(\phi_x)
=e_i/|\co_i|,
$
where $x\in\co_i$.
Hence if $\phi_X$
is the indicator function of a subset $X\subset V$,
then since
$\phi_X=\sum_{x\in X}\phi_x$
we have
\begin{equation}\label{eq:av-indicator-fn}
{\rm av}(\phi_X)
=\sum_{1\leq i\leq r}\frac{|\co_i\cap X|}{|\co_i|}\cdot e_i.
\end{equation}

We now assume that the action of $G$ on $V$
is $\F_q$-linear.
Given any automorphism $G\ni g \mapsto g^\iota\in G$
of order $1$ or $2$ (as will be discussed shortly), we
consider the action
of $G$ on $V^\ast$ defined by
$[x,gy]=[(g^\iota)^{-1}x,y]$.
It is easy to see that this action is well-defined and
$\F_q$-linear, and that the action thus defined on $V^{\ast\ast}$ is equivariant with respect to the canonical isomorphism $V \rightarrow V^{\ast\ast}$.
We thus have a $\C$-linear representation of $G$ on $\cF_{V^\ast}$,
the subspace of $G$-invariant functions
$\cF_{V^\ast}^G$, and the averaging operator
${\rm av}\colon \cF_{V^\ast}\rightarrow\cF_{V^\ast}^G$
as well.

For the Fourier transform,
we immediately see that
\begin{equation}\label{eq:G-eq}
\widehat{g\phi}=g^\iota\widehat\phi,
\qquad
g\in G, \phi\in\cF_V.
\end{equation}
In particular, if $\phi$ is $G$-invariant,
then so is $\widehat\phi$. Thus we have
a $\C$-linear isomorphism
\[
\cF_V^G\longrightarrow \cF_{V^\ast}^G;
\qquad
\phi\longmapsto\widehat\phi.
\]
Our goal is to understand this
Fourier transform between
$\cF_V^G$
and
$\cF_{V^\ast}^G$
explicitly.
By looking at their dimensions, we see that
$V$ and $V^\ast$ have the same number of $G$-orbits.
Let
$\co_1^\ast,\dots,\co_r^\ast$ be the all $G$-orbits in $V^\ast$,
and $e_i^\ast\in\cF_{V^\ast}^G$
be the indicator function of $\co_i^\ast$.
Let $M\in{\rm M}_r(\C)$
be the representation matrix
of the Fourier transformation
with respect to the basis $e_1,\dots,e_r\in\cF_V^G$
and $e_1^\ast,\dots,e_r^\ast\in\cF_{V^\ast}^G$.
By definition,
\begin{equation}\label{eq:A}
M=[a_{ij}]
\quad
\text{where}
\quad
\widehat{e_j}=\sum_{i}a_{ij}e_i^\ast,
\end{equation}
and we wish to determine this matrix $M$.

By \eqref{eq:G-eq}, we have
$
\widehat{{\rm av}(\phi)}
={\rm av}(\widehat\phi).
$
Let $W$ be any subspace of $V$ and put $\phi=\phi_W$.
Then by \eqref{eq:subspace-FT} and \eqref{eq:av-indicator-fn},
we have the following simple formula, which is particularly useful:
\begin{equation}\label{eq:subspace-fourier-average-general}
\sum_{1\leq i\leq r}\frac{|\co_i\cap W|}{|\co_i|}\cdot\widehat{e_i}
=\frac{|W|}{|V|}
\sum_{1\leq i\leq r}\frac{|\co_i^\ast\cap W^\perp|}{|\co_i^\ast|}\cdot e_i^\ast.\end{equation}

We now consider the following assumption on
the representation $(G,V)$, which is satisfied
by all of the cases we will study in later sections.
(We will be required to assume that the characteristic of $\F_q$
is not one of finitely many `bad' primes, depending on the particular
$(G, V)$.)
\begin{assmp}\label{assmp:dual-identify}
There is an involution
$G\ni g\mapsto g^\iota\in G$ and a non-degenerate
symmetric bilinear form
$V\times V\ni(x,y)\rightarrow b(x,y)\in\F_q$,
such that $b(gx,g^\iota y)=b(x,y)$.
\end{assmp}

By non-degeneracy
the map $\theta\colon V\ni y\mapsto b(\cdot, y)\in V^\ast$
is an $\F_q$-linear isomorphism.
We consider the representation of $V^\ast$
with respect to the involution $\iota$. Then $\theta$
preserves the action of $G$.
We identify $V^\ast$ with $V$ via $\theta$.
Hence the pairing on $V$ and $V^\ast=V$ is given by
$[x,y]=b(x,y)$,
and this satisfies $[gx,g^\iota y]=[x,y]$.
The Fourier transform is now an automorphism
on $\cF_V$ or on $\cF_V^G$.
We put $\co_i^\ast=\co_i$ and thus $e_i^\ast=e_i$.
We summarize our argument
into the following.
\begin{proposition}\label{prop:A}
Let $V$ be a finite dimensional representation of a group $G$
over $\F_q$,
satisfying Assumption \ref{assmp:dual-identify}.
Then for any subspace $W$ of $V$, we have
\begin{equation}\label{eq:subspace-fourier-average-selfdual}
\sum_{1\leq i\leq r}\frac{|\co_i\cap W|}{|\co_i|}\cdot\widehat{e_i}
=\frac{|W|}{|V|}
\sum_{1\leq i\leq r}\frac{|\co_i\cap W^\perp|}{|\co_i|}\cdot e_i.
\end{equation}
\end{proposition}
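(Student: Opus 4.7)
The plan is to deduce this as an immediate specialization of the already-derived identity \eqref{eq:subspace-fourier-average-general}, after using Assumption \ref{assmp:dual-identify} to identify $V$ with $V^\ast$ as $G$-representations in a way that is compatible with both the $G$-action and the pairing.

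The key verification is that the map $\theta\colon V\to V^\ast$ defined by $\theta(y) = b(\cdot,y)$ is a $G$-equivariant $\F_q$-linear isomorphism, where $V^\ast$ is equipped with the $G$-action induced by the involution $\iota$, namely $[x, gy] = [(g^\iota)^{-1} x, y]$. Non-degeneracy of $b$ gives bijectivity and $\F_q$-linearity is immediate from bilinearity. For equivariance, I would start from the invariance relation $b(gx, g^\iota y) = b(x, y)$, substitute $g \mapsto g^\iota$ (permissible since $\iota$ is an involution), and then substitute $x \mapsto (g^\iota)^{-1} x$, arriving at $b(x, gy) = b((g^\iota)^{-1} x, y)$. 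Unpacking definitions, this says precisely that $\theta(gy)(x) = (g \cdot \theta(y))(x)$, as required.

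Once equivariance of $\theta$ is established, the orbits $\co_i^\ast$ on $V^\ast$ correspond under $\theta^{-1}$ bijectively to the orbits $\co_i$ on $V$, the basis $e_i^\ast$ corresponds to $e_i$, and the annihilator $W^\perp \subseteq V^\ast$ of a subspace $W \subseteq V$ pulls back to $\{y \in V : b(x, y) = 0 \text{ for all } x \in W\}$, which is the $b$-orthogonal complement of $W$ inside $V$. Furthermore, by construction the pairing is identified with $b$, so that \eqref{eq:G-eq} and \eqref{eq:subspace-FT} remain valid on $\cF_V^G$ viewed as its own target. Substituting $\co_i^\ast = \co_i$, $e_i^\ast = e_i$ and this new interpretation of $W^\perp$ directly into \eqref{eq:subspace-fourier-average-general} then yields \eqref{eq:subspace-fourier-average-selfdual}.

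I expect the main obstacle to be purely bookkeeping: tracking how $\iota$, the inverse, and the symmetry of $b$ interact in the equivariance check for $\theta$, which is sensitive to the sign conventions used to define the $G$-action on $V^\ast$. Everything else is a tautological restatement, since Proposition \ref{prop:A} is really \eqref{eq:subspace-fourier-average-general} after the self-dual identification.
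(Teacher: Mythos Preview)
Your proposal is correct and follows essentially the same route as the paper: the paper derives \eqref{eq:subspace-fourier-average-general} first, then under Assumption~\ref{assmp:dual-identify} identifies $V^\ast$ with $V$ via $\theta(y)=b(\cdot,y)$, asserts its $G$-equivariance, and sets $\co_i^\ast=\co_i$, $e_i^\ast=e_i$, so that \eqref{eq:subspace-fourier-average-general} becomes \eqref{eq:subspace-fourier-average-selfdual}. Your write-up actually spells out the equivariance verification more carefully than the paper does.
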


We can now precisely explain our strategy for determining $M$: given $(G, V)$,
we compute the vectors $(|\co_i\cap W|)_i$ and $(|\co_i\cap W^\perp|)_i$ for many subspaces $W$.
Eventually these vectors will span $\R^r$, after which basic linear
algebra finishes off the computation.

We have not attempted to prove in general that the vectors $(|\co_i\cap W|)_i$ span $\R^r$, but in 
practice this does not seem to be an issue.

\medskip

Before ending this section,
we prove an additional lemma on the matrix $M$.
This lemma is logically not necessary,
but we find it quite convenient to check our computation.

\begin{lemma}\label{lem:A}
\begin{enumerate}
\item
We have $|\co_i|a_{ij}=|\co_j|a_{ji}$.
Namely, if we put $S={\rm diag}(|\co_i|)$,
then $SM$ is symmetric.
\item
Suppose that $x$ and $-x$ lie in the same $G$-orbit
for each $x\in V$. Then $M^2=|V|^{-1}E_r$,
where $E_r$ is the identity matrix.
\end{enumerate}
\end{lemma}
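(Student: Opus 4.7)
Both parts follow quickly from rewriting the matrix entries as character sums. Evaluating the defining relation $\widehat{e_j}=\sum_i a_{ij}e_i$ at any $y\in\co_i$ gives $a_{ij}=|V|^{-1}\sum_{x\in\co_j}\langle x,y\rangle$, and averaging over the orbit yields
\[
|\co_i|\,a_{ij}=|V|^{-1}\sum_{y\in\co_i}\sum_{x\in\co_j}\langle x,y\rangle.
\]
For part (1), the key ingredient is the symmetry of the bilinear form $b$ from Assumption \ref{assmp:dual-identify}: after the identification $V^\ast=V$ via $\theta$, we have $[x,y]=b(x,y)=b(y,x)=[y,x]$ and therefore $\langle x,y\rangle=\langle y,x\rangle$. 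Swapping the roles of $x$ and $y$ in the double sum above rewrites the same quantity as $|\co_j|\,a_{ji}$, giving $|\co_i|a_{ij}=|\co_j|a_{ji}$ and hence the symmetry of $SM$.

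For part (2), I would apply the Fourier inversion formula $\widehat{\widehat{\phi}}(x)=|V|^{-1}\phi(-x)$, already established in the section, to $\phi=e_j$. The hypothesis that $x$ and $-x$ lie in the same $G$-orbit makes $e_j$ invariant under negation, so $\widehat{\widehat{e_j}}=|V|^{-1}e_j$. On the other hand, iterating the defining relation gives $\widehat{\widehat{e_j}}=\sum_i a_{ij}\widehat{e_i}=\sum_k (M^2)_{kj}\,e_k$, and comparing coefficients yields $M^2=|V|^{-1}E_r$.

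There is no serious conceptual obstacle; both statements essentially unwind the definitions. The only care needed is in bookkeeping the identification of $V^\ast$ with $V$ so that the symmetry of $b$ cleanly translates into the symmetry $\langle x,y\rangle=\langle y,x\rangle$ used in part (1), and so that Fourier inversion takes exactly the form needed in part (2). It is also worth flagging that the hypothesis of part (2) is essential: without it, $\widehat{\widehat{e_j}}$ is only related to the indicator function of $-\co_j$, so $M^2$ would instead be $|V|^{-1}$ times the permutation matrix induced by negation on the set of $G$-orbits.
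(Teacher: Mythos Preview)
Your proof is correct and follows essentially the same approach as the paper. The paper phrases part (1) through the averaging operator, showing $\widehat{{\rm av}(\phi_x)}(y)=\widehat{{\rm av}(\phi_y)}(x)$ via the identity $\widehat{\phi_x}(y)=|V|^{-1}\langle x,y\rangle=\widehat{\phi_y}(x)$, which is exactly your double-sum symmetry unpacked; for part (2) the paper likewise invokes Fourier inversion together with the hypothesis that $G$-invariant functions are even, just as you do.
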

\begin{proof}
For (1), first note that
$
\widehat{\phi_x}(y)
=|V|^{-1}\langle x,y\rangle
=\widehat{\phi_y}(x)
$
for all $x,y\in V$. Hence we also have
\[
\widehat{{\rm av}(\phi_{x})}(y)
=\frac{1}{|G|}\sum_{g\in G}\widehat{g\phi_{x}}(y)
=\frac{1}{|G|}\sum_{g\in G}\widehat{\phi_{gx}}(y)
=\frac{1}{|G|}\sum_{g\in G}\widehat{\phi_{y}}(gx)
={\rm av}(\widehat{\phi_{y}})(x)
=\widehat{{\rm av}(\phi_{y})}(x).
\]
If $x\in\co_j$ and $y\in\co_i$,
then since ${\rm av}(\phi_x)=|\co_j|^{-1}e_j$ we have
\[
\widehat{{\rm av}(\phi_{x})}(y)
=|\co_j|^{-1}\widehat{e_j}(y)
=|\co_j|^{-1}\sum_ka_{kj}e_k(y)
=|\co_j|^{-1}a_{ij},
\]
and similarly 
$\widehat{{\rm av}(\phi_{y})}(x)
=|\co_i|^{-1}a_{ji}$.
For (2), by assumption
$\phi(-x)=\phi(x)$ if $\phi\in\cF_V^G$.
Thus $M^2=|V|^{-1}E_r$ is simply the Fourier inversion.
\end{proof}

In successive sections, we obtain
$M$ in Theorems
\ref{thm:sym32-A},
\ref{thm:sym22-A},
\ref{thm:sym23-A},
\ref{thm:2sym22-A}
and
\ref{thm:2sym23-A}
for each of the cases.
We double checked our computation
by confirming that $M$'s in the theorems all satisfy
Lemma \ref{lem:A}.

We used PARI/GP \cite{pari} to carry out the necessary linear algebra. In each case 
we have embedded our source code, together with the matrix $M$ in machine-readable format,
as a comment immediately following the theorem statement in the \LaTeX \ source for this file, which may be freely downloaded from the arXiv.
The source code is also available on the second author's website. 

\section{$\Sym^3(2)$}\label{sec:sym32}
We first handle the space of binary cubic forms, as we described in the introduction. In this case the matrix $M$ was determined
previously by Mori \cite{mori}, and so here we give a second proof.

Let $V = \Sym^3(\F_q^2)$ be the space of binary cubic forms
in variables $u$ and $v$,
let $G = \GL_2(\F_q)$, and consider the usual
`twisted action' of $G$ on $V$, given by
\[
(g \circ x)(u, v) = (\det g)^{-1} x((u, v) g).
\]
We write an element of $V$ as
$x=x(u,v)=au^3+bu^2v+cuv^2+dv^3=(a,b,c,d)$.
Let $\disc(x)=b^2c^2+18abcd-4ac^3-4b^3d-27a^2d^2$
be the discriminant of $x$. Then $\disc(gx)=(\det g)^2\disc(x)$.
We say that $x$ is singular if $\disc(x)=0$, or equivalently if
$x$ has a multiple root in $\mathbb P^1$.

The following orbit description is well known.
For a proof, see, e.g., \cite[Section 2]{Wright} or \cite[Section 5.1]{TT_L}.

\begin{proposition}
The action of $G$ on $V$ consists of six orbits,
of which three are singular. We label them 
with the usual symbols
\[
(0), (1^3),(1^21),(111), (21),(3),
\]
where 
here $(0) = \{ 0 \}$, and for the remaining orbits the symbol indicates the degrees and multiplicities of the irreducible
factors of any representative form $x$, thus the first three orbits are singular.
The following table lists a representative,
the number of zeros in ${\mathbb P}^1(\F_q)$
of any element in the orbit,
and the size of the orbit:
\[
\begin{array}{ r c c c}
\textnormal{Orbit name}
&\textnormal{Representative}
&\textnormal{Zeros}
&\textnormal{Orbit size}
\\ \hline
\calO_{(0)} = \calO_1 & 0 & q+1 & 1 
\\
\calO_{(1^3)} = \calO_2& v^3 &1 & q^2 - 1 
\\
\calO_{(1^21)} = \calO_3 & uv^2 & 2 & q (q^2 - 1)
\\
\calO_{(111)} = \calO_4 & uv(u-v) & 3 & \frac{1}{6} (q^2 - 1)(q^2 - q) 
\\
\calO_{(21)} = \calO_5& v(u^2+a_2uv+b_2v^2) & 1 & \frac{1}{2} (q^2 - 1)(q^2 - q) 
\\
\calO_{(3)} = \calO_6 & u^3+a_3u^2v+b_3uv^2+c_3v^3 & 0 & \frac{1}{3} (q^2 - 1)(q^2 - q) 
\end{array}
\]
Here $u^2+a_2u+b_2$ and  $u^3+a_3u^2+b_3u+c_3\in\F_q[u]$
are respectively any irreducible quadratic and cubic polynomials.
\end{proposition}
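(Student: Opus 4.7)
The plan is to parameterize orbits by a Galois-equivariant root invariant. A nonzero binary cubic $x \in V$ determines, and is determined up to scalar by, its zero divisor in $\P^1_{\overline{\F_q}}$, an effective $\Gal(\overline{\F_q}/\F_q)$-stable divisor of degree $3$. The possible shapes of such a divisor are exactly the five nonzero symbols listed: a rational triple point, a rational double plus a distinct rational simple, three distinct rational points, a rational point plus a conjugate pair in $\P^1(\F_{q^2})\setminus\P^1(\F_q)$, or a Galois $3$-orbit in $\P^1(\F_{q^3})\setminus\P^1(\F_q)$.

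The first step is to check that the shape is a $\GL_2(\F_q)$-invariant. The substitution $(u,v)\mapsto (u,v)g$ acts on $\P^1_{\overline{\F_q}}$ through $\PGL_2(\overline{\F_q})$ and therefore preserves multiplicities and Galois orbit structure, while the scalar factor $(\det g)^{-1}$ does not affect zeros. A compatibility I would record explicitly is that the central scalar acts on forms by $\lambda I \circ x = \lambda x$, since $(\det \lambda I)^{-1} x(\lambda u,\lambda v) = \lambda^{-2}\cdot\lambda^3 x = \lambda x$; in particular every $\F_q^\times$-scalar multiple of $x$ already lies in the $G$-orbit of $x$.

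Next I would prove transitivity within each shape. Sharp $3$-transitivity of $\PGL_2(\F_q)$ on $\P^1(\F_q)$ immediately handles $(1^3)$, $(1^2 1)$, and $(111)$. For $(21)$ and $(3)$ one needs transitivity of $\PGL_2(\F_q)$ on conjugate pairs in $\P^1(\F_{q^2})\setminus\P^1(\F_q)$, respectively on Galois $3$-orbits in $\P^1(\F_{q^3})\setminus\P^1(\F_q)$; both are standard and can be seen either by a direct normalization, using the affine subgroup to reduce a monic irreducible quadratic or cubic to a normal form, or by recognizing these as homogeneous spaces for non-split maximal tori in $\PGL_2$. Combined with the scalar observation above, these $\PGL_2$-statements upgrade to the desired $G$-transitivity on forms.

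Finally I would verify the representatives and tabulate sizes. The zero counts in $\P^1(\F_q)$ are immediate from the shape. For orbit sizes the cleanest route is to count divisors of each shape and multiply by $q-1$ for the scalar ambiguity: e.g., $(111)$ contributes $\binom{q+1}{3}(q-1) = \tfrac{1}{6}(q^2-1)(q^2-q)$, $(21)$ contributes $(q+1)\cdot\tfrac{1}{2}(q^2-q)\cdot(q-1) = \tfrac{1}{2}(q^2-1)(q^2-q)$, and $(3)$ contributes $\tfrac{1}{3}(q^3-q)(q-1) = \tfrac{1}{3}(q^2-1)(q^2-q)$, all matching the table; the remaining rows are analogous, or can alternatively be obtained via a quick stabilizer computation against $|G| = (q^2-1)(q^2-q)$. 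The main thing to get right -- and the one mildly delicate point -- is the bookkeeping with the $(\det g)^{-1}$ twist, since without the scalar observation one would recover only the $\PGL_2$-orbits on divisors rather than the $\GL_2$-orbits on forms.
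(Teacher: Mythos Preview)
Your argument is correct. The paper itself does not prove this proposition; it simply records it as ``well known'' and refers to \cite[Section 2]{Wright} and \cite[Section 5.1]{TT_L}, and your proof is exactly the classical one those references give: pass from a nonzero form to its Galois-stable degree-$3$ divisor on $\P^1$, note that the factorization type is a $G$-invariant, use $3$-transitivity of $\PGL_2(\F_q)$ on $\P^1(\F_q)$ together with transitivity on irrational Galois orbits to show each type is a single $\PGL_2$-orbit of divisors, and then upgrade to $\GL_2$-orbits of forms via your scalar observation $\lambda I\circ x=\lambda x$.

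One small point of phrasing: in the $(21)$ case what you actually need is transitivity on configurations consisting of a rational point together with a conjugate pair, not merely transitivity on conjugate pairs. Your parenthetical about using the affine subgroup (the stabilizer of the rational zero once it is sent to $[0:1]$) to normalize the irreducible quadratic factor is exactly what closes this gap, so the content is there; it would just read more cleanly if you stated the required transitivity in that form.
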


We now assume that $\chr(\F_q)\neq3$.
We define a symmetric bilinear form on $V$ by
\begin{equation}\label{eq:sym32-bilinear-form}
[x,x']:=aa'+bb'/3+cc'/3+dd'.
\end{equation}
Then we have $[gx,g^{-T}x']=[x,x']$ and so
$(G,V)$ satisfies Assumption \ref{assmp:dual-identify}.

The following table describes the counts of elements in each orbit
for a variety of subspaces $W_i$.
\[
\small
\begin{array}{r||ccccccc}
\hline
\multicolumn{1}{c||}{\text{Subspace}}
&\co_{(0)}&\co_{(1^3)}&\co_{(1^21)}&\co_{(111)}&\co_{(21)}&\co_{(3)}\\
\hline
W_0=\{(0,0,0,0)\}
&1\\
W_1=\{(0,0,0,*)\}
&1&q-1\\
W_2^\perp=W_2=\{(0,0,*,*)\}
&1&q-1&q(q-1)\\
W_3=\{(0,*,*,0)\}
&1&&2(q-1)&(q-1)^2\\
W_3^\perp=\{(*,0,0,*)\}&-&-&-&-&-&-\\
(q\equiv1\pmod 3)
&1&2(q-1)&&\frac13(q-1)^2&&\frac23(q-1)^2\\
(q\equiv2\pmod 3)
&1&2(q-1)&&&(q-1)^2\\
W_1^\perp=\{(0,*,*,*)\}
&1&|\co_{(1^3)}|/(q+1)&2|\co_{(1^21)}|/(q+1)&3|\co_{(111)}|/(q+1)&|\co_{(21)}|/(q+1)\\
V=\{(*,*,*,*)\}
&1&|\co_{(1^2)}|&|\co_{(1^21)}|&|\co_{(111)}|&|\co_{(21)}|&|\co_{(3)}|\\
\hline
\end{array}
\]

Here, the notation $\{(0,0,*,*)\}$ (for example) means the subspace of binary cubic forms
whose first two coefficients are zero and whose latter two are arbitrary.
\begin{remark}
Note that (for example)
we do not literally have $W_2^\perp = W_2$; rather,
these two spaces $W_2=\{(0,0,*,*)\}$ and $W_2^\perp=\{(*,*,0,0)\}$
are
$\GL_2(\F_q)$-equivalent and so the intersections with each $\calO_i$ have the same size.
Similarly the subspace listed for $W_1^\perp$ is in fact a $\GL_2(\F_q)$-transformation of $W_1^\perp=\{(*,*,*,0)\}$, and
in general we will (if the transformations are obvious) apply such transformations in our listings of subspaces
without further comment.
\end{remark}

\begin{remark}
In the literature the alternating form $[x,x']^\sim =da'-cb'/3+bc'/3-ad'$ is sometimes introduced and used instead of our $[x,x']$ in
\eqref{eq:sym32-bilinear-form}; see for example
Shintani \cite{shintani}. Since $[\twtw01{-1}0x,x']=[x,x']^\sim$, these two bilinear forms are essentially the same. Here we choose this $[x,x']$ 
because it is more similar to the forms associated to other representations in later sections.
\end{remark}

The counts above are for the most part trivial to verify,
and so we only describe a few cases explicitly.
The subspace $W_1^{\perp}$ consists of those forms having $[1 : 0]$ as a zero;
since $\GL_2(\F_q)$ acts transitively on $\P^1(\F_q)$,
we have that
$\frac{ |W_1^\perp \cap \calO_i|}{|\calO_i|} \cdot (q + 1)$
is equal to the number of zeros of any $x \in \calO_i$ in $\P^1(\F_q)$.

For $W_3^\perp$, 
let $x=(a,0,0,d)=au^3+dv^3\in W_3^\perp$ with $ad\neq 0$.
Then $x$ is non-singular, with a zero in $\P^1(\F_q)$ if and only if 
$\frac{d}{a} \in( \F_q^\times)^3$.
If $q\equiv -1\mod 3$, then $(\F_q^\times)^3 = \F_q^\times$, so that every
$x$ has a root in $\F_q$. Moreover, the quotient of any two roots of $x$
is a third root of unity, hence not in $\F_q$, so that we have
$x\in\co_{(21)}$ for all $(q - 1)^2$ forms $x$.
If $q\equiv 1\mod 3$,
then $(\F_q^\times)^3$ is the index three subgroup of $\F_q^\times$,
there are $\frac23(q-1)^2$ irreducible $x \in \co_{(3)}$, and the remaining
$\frac13(q - 1)^2$ forms $x$ all factor completely and are in $\co_{(111)}$.

We now use Proposition \ref{prop:A} to determine $M$.
We verify by inspection that the vectors
$\left(|W \cap \calO_i| \right)_i$
for $W \in \{ W_0, W_1, W_2, W_3, W_1^\perp, V \}$ are linearly independent,
and that this set together with
$W_3^\perp$ is closed under taking duals.
The linear algebra is not difficult to carry out by hand,
but we used PARI/GP 
\cite{pari} for this purpose.  
We therefore obtain the matrix $M$,
with a different proof than that previously given by Mori.

\begin{theorem}[Mori \cite{mori}]\label{thm:sym32-A}
Suppose $\chr(\F_q)\neq3$.
We have
\[
M=\frac{1}{q^4}
\begin{bmatrix}
1&q^2-1&q^3-q
&(q^2-1)(q^2-q)/6&(q^2-1)(q^2-q)/2&(q^2-1)(q^2-q)/3\\
1&-1&q^2-q
&q(q-1)(2q-1)/6&-q(q-1)/2&-q(q^2-1)/3\\
1&q-1&q^2-2q&-q(q-1)/2&-q(q-1)/2&0\\
1&2q-1&-3q&q(5\pm q)/6&-q(-1\pm q)/2&q(-1\pm q)/3\\
1&-1&-q&-q(-1\pm q)/6&q(1\pm q)/2&-q(-1\pm q)/3\\
1&-q-1&0&q(-1\pm q)/6&-q(-1\pm q)/2&q(2\pm q)/3\\
\end{bmatrix},
\]
where the signs $\pm$ appearing in right-lower $3$-by-$3$ entries
are according as $q\equiv\pm1\pmod 3$.
\end{theorem}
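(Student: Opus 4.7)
The plan is to apply Proposition \ref{prop:A} to a list of seven subspaces $W \in \{W_0, W_1, W_2, W_3, W_3^\perp, W_1^\perp, V\}$, which together yield a system of linear equations in the unknown Fourier transforms $\widehat{e_1}, \dots, \widehat{e_6}$; once the coefficient vectors $(|\co_i\cap W|/|\co_i|)_i$ span $\R^6$, we can solve for the $\widehat{e_j}$ and extract the columns of $M$ from the expansions $\widehat{e_j} = \sum_i a_{ij} e_i$.

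First I would verify each row of the orbit-count table. The counts for $W_0$, $V$, $W_1$, and $W_2$ are essentially tautological once the orbit sizes are recorded. For $W_3 = \{(0,*,*,0)\} = \{buv(u + \alpha v) \mid \alpha, b \in \F_q\}$ one factors the forms directly: if $bc = 0$ the form is singular (and lies in $\co_{(0)}$ or $\co_{(1^21)}$), while otherwise $uv(u + (c/b)v)$ has three distinct $\F_q$-rational roots, giving the claimed breakdown. The row for $W_1^\perp$ uses that $W_1^\perp$ is the set of forms vanishing at $[1:0] \in \P^1(\F_q)$ together with the transitive action of $\GL_2(\F_q)$ on $\P^1(\F_q)$, so $|W_1^\perp \cap \co_i|/|\co_i| \cdot (q+1)$ equals the number of $\P^1(\F_q)$-roots of any $x \in \co_i$, as tabulated. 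The only genuinely case-sensitive row is $W_3^\perp = \{au^3+dv^3\}$, and the argument sketched in the excerpt handles it: nonzero forms are nonsingular, their $\P^1(\F_q)$-roots correspond to cube roots of $-a/d$, and the dichotomy $q \equiv 1$ or $2 \pmod 3$ controls whether $(\F_q^\times)^3 = \F_q^\times$ or is of index $3$, giving the $(111)$ vs $(21)$ vs $(3)$ split.

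Next I would assemble the seven equations from \eqref{eq:subspace-fourier-average-selfdual} into a $7 \times 6$ matrix whose rows are the coefficient vectors $(|\co_i\cap W|/|\co_i|)_i$ and check that it has rank $6$; since the subspaces listed have increasing dimension and the chosen flag clearly separates the six orbits (e.g.\ $W_1$ detects $\co_{(1^3)}$, $W_2$ first picks up $\co_{(1^21)}$, $W_3$ first meets $\co_{(111)}$, and $W_3^\perp$ first meets $\co_{(21)}$ or $\co_{(3)}$), spanning is straightforward to confirm by inspection. The right-hand sides, similarly indexed by $W^\perp$, are also read off the table, so the system can be solved by standard linear algebra (carried out in \cite{pari}, as noted). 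Reading column $j$ of the solution gives $\widehat{e_j}$ in the basis $\{e_i\}$, and hence the $j$-th column of $M$.

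The main obstacle is bookkeeping rather than conceptual: one must be careful about the normalization factors $|W|/|V|$ and $1/|\co_i|$, and about tracking the $q \equiv \pm 1 \pmod 3$ cases through the final solve so that they appear correctly in the bottom-right $3\times 3$ block of $M$. As a consistency check, I would verify that the resulting $M$ satisfies both parts of Lemma \ref{lem:A}: the symmetry $|\co_i| a_{ij} = |\co_j| a_{ji}$ and, since $-x$ lies in the same orbit as $x$ for every $x \in V$ (as $(-I) \in \SL_2$ acts trivially on $V$ up to the orbit structure, and more directly because $\GL_2$ acts transitively on each $\co_i$ in a $\pm$-symmetric way), the Fourier-inversion identity $M^2 = q^{-4} E_6$. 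These two checks together make it essentially impossible for an arithmetic slip to survive undetected, recovering Mori's formula.
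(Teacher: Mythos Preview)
Your proposal is correct and follows essentially the same approach as the paper: you use the same list of subspaces $W_0, W_1, W_2, W_3, W_3^\perp, W_1^\perp, V$, verify the orbit-intersection counts as in the paper's table (including the $q \equiv \pm 1 \pmod 3$ dichotomy for $W_3^\perp$), and then invoke Proposition~\ref{prop:A} and linear algebra to solve for $M$, with the Lemma~\ref{lem:A} checks as confirmation. The only cosmetic difference is that the paper singles out the six subspaces $\{W_0, W_1, W_2, W_3, W_1^\perp, V\}$ as giving linearly independent vectors (with $W_3^\perp$ needed only on the dual side), whereas you phrase it as a $7\times 6$ rank-$6$ system; these are the same thing.
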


We derive two formulas as corollaries to Theorem \ref{thm:sym32-A}.
The first one below was previously given and used
in \cite{TT_L} to establish
an analogue of the Ohno-Nakagawa formula for the
`divisible zeta function'. In a companion paper \cite{TT_leveldist},
we use this to study almost-prime
cubic field discriminants.

\begin{corollary}\label{cor:sym32-Psi}
For a finite field $\F_q$ of characteristic not equal to $3$,
write $\Psi_q(x)$ for the characteristic function of
singular binary cubic forms over $\F_q$.
We have
\begin{equation}\label{eq:Psi_fourier_binary}
\widehat{\Psi_q}(x)=
\begin{cases}
q^{-1}+q^{-2}-q^{-3} & x=0,\\
q^{-2}-q^{-3} & x\neq0, \Disc(x)=0\\
-q^{-3} & \Disc(x)\neq0,\\
\end{cases}
\end{equation}
\end{corollary}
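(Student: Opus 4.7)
The plan is to observe that $\Psi_q$ is $G(\F_q)$-invariant and to decompose it in the basis $e_1,\dots,e_6$ of $\calF_V^G$, then apply the explicit formula for $M$ already given in Theorem \ref{thm:sym32-A}. First I would identify which orbits are singular: by the orbit table, $\disc(x)=0$ cuts out exactly the first three orbits $\calO_{(0)}$, $\calO_{(1^3)}$, $\calO_{(1^21)}$, so
\[
\Psi_q \;=\; e_1 + e_2 + e_3.
\]

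Next, by linearity of the Fourier transform together with the definition \eqref{eq:A} of $M$,
\[
\widehat{\Psi_q} \;=\; \sum_{j=1}^{3}\widehat{e_j} \;=\; \sum_{i=1}^{6}(a_{i1}+a_{i2}+a_{i3})\,e_i,
\]
so the value of $\widehat{\Psi_q}$ on any $x\in \calO_i$ is precisely the sum of the first three entries of the $i$th row of $M$. I would then read these sums off directly from Theorem \ref{thm:sym32-A}: the first row gives $1+(q^2-1)+(q^3-q)=q^3+q^2-q$; rows two and three each give $q^2-q$; and rows four, five, and six each give $-q$. Dividing by the overall factor $q^4$ yields the three values $q^{-1}+q^{-2}-q^{-3}$, $q^{-2}-q^{-3}$, and $-q^{-3}$, which is exactly \eqref{eq:Psi_fourier_binary}.

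Since Theorem \ref{thm:sym32-A} already does all the real work, there is no genuinely hard step here; the corollary is a one-line bookkeeping consequence. The one point worth confirming is that the answer is independent of the residue of $q$ modulo $3$, but this is immediate because the $\pm$ ambiguities in the matrix $M$ occur only in columns $4$, $5$, $6$, which do not enter this linear combination. As an optional sanity check, one could verify that the resulting row vector $(a_{i1}+a_{i2}+a_{i3})_i$ is consistent with the involutive symmetry of Lemma \ref{lem:A}, but this is not logically necessary for the proof.
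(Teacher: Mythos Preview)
Your proof is correct and follows exactly the paper's own approach: the paper simply notes that $\Psi_q=e_1+e_2+e_3$, so $\widehat{\Psi_q}=\widehat{e_1}+\widehat{e_2}+\widehat{e_3}$, and reads off the result from Theorem~\ref{thm:sym32-A}. Your additional remarks about the $\pm$ signs lying only in columns $4,5,6$ and the optional sanity check via Lemma~\ref{lem:A} are correct observations but not needed.
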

This is immediate from Theorem \ref{thm:sym32-A},
because $\Psi_q=e_1+e_2+e_3$
and thus $\widehat{\Psi_q}=\widehat{e_1}+\widehat{e_2}+\widehat{e_3}$.
Another consequence of Theorem \ref{thm:sym32-A}
is an explicit formula of the Fourier transform of $\chi(\Disc(x))$,
where $\chi$ is the quadratic character on $\F_q$. This result is contained within Denef and Gyoja's
main theorem \cite{DG} and in this case we obtain a simpler proof.

\begin{corollary}\label{cor:sym32-quadratic-twist}
Assume $p=\chr(\F_q)\neq2,3$, and let
$\chi\colon\F_q^\times\rightarrow\{\pm1\}$
be the unique quadratic character.
We use the usual convention
$\chi(0)=0$. We have
\begin{equation}\label{eq:sym32-fourier-quadratic}
\frac{1}{q^4}\sum_{x\in V}\chi\left(\Disc(x)\right)
\exp\left(2\pi\sqrt{-1}\cdot \frac{{\rm Tr}_{\F_q/\F_p}([x,y])}{p}\right)
=\frac{1}{q^2}\cdot\chi\left(\Disc^\ast(y)\right),
\end{equation}
where $\Disc^\ast(y)=-\Disc(y)/27$
is the normalized invariant for the dual space $V^\ast$.
\end{corollary}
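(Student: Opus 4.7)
The plan is to reduce this corollary to Theorem \ref{thm:sym32-A} by observing that $\Phi(x):=\psi(\Disc(x))$ is a $G$-invariant function on $V(\F_q)$. Indeed, since $\Disc(gx)=(\det g)^2\Disc(x)$, we have $\psi(\Disc(gx))=\psi((\det g)^2)\psi(\Disc(x))=\psi(\Disc(x))$. Hence $\Phi\in\cF_V^G$, so it can be written uniquely as $\Phi=\sum_{i=1}^6 c_i e_i$, and its Fourier transform can then be read off from the matrix $M$.

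First I would determine the coefficients $(c_i)$. On the three singular orbits $\co_{(0)}$, $\co_{(1^3)}$, $\co_{(1^21)}$ the discriminant vanishes, so $c_1=c_2=c_3=0$. On the three nonsingular orbits I would use the classical fact that for a separable polynomial $f$ over $\F_q$, $\Disc(f)$ is a square in $\F_q^\times$ if and only if Frobenius acts on the roots of $f$ by an element of the alternating group. Frobenius acts as the identity on $\co_{(111)}$, as a transposition on $\co_{(21)}$, and as a $3$-cycle on $\co_{(3)}$; these permutations lie in $A_3$, $S_3\setminus A_3$, and $A_3$ respectively, so $(c_4,c_5,c_6)=(1,-1,1)$.

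Next I would compute $\widehat{\Phi}=\widehat{e_4}-\widehat{e_5}+\widehat{e_6}$ by taking the appropriate signed combination of columns $4$, $5$, $6$ of $M$ row by row. A short calculation shows that rows $1,2,3$ telescope to $0$ (as they must, since $\Disc^\ast(y)=0$ there), while rows $4,5,6$ collapse to $\pm q^{-2}$, with the sign depending both on the orbit and on whether $q\equiv 1$ or $q\equiv -1\pmod 3$ through the $\pm$'s appearing in the lower-right block of $M$.

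Finally I would match these signs with $q^{-2}\psi(\Disc^\ast(y))$. Since $\Disc^\ast(y)=-\Disc(y)/27$ and $27=3^3$, we have $\psi(\Disc^\ast(y))=\psi(-3)\psi(\Disc(y))$; and $\psi(-3)=1$ if and only if $\F_q$ contains a primitive cube root of unity, equivalently $q\equiv 1\pmod 3$. Combining this with the already-known value of $\psi(\Disc(y))$ on each nonsingular orbit gives the desired equality in both residue classes of $q\bmod 3$. The main obstacle is purely bookkeeping: carefully tracking the two cases $q\equiv\pm 1\pmod 3$ through both the row-by-row matrix calculation and the value of $\psi(-3)$, and confirming that the signs agree in each of the six orbits.
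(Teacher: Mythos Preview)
Your proof is correct and follows essentially the same route as the paper's: identify $\psi(\Disc(\cdot))=e_4-e_5+e_6$, apply the matrix $M$ from Theorem~\ref{thm:sym32-A} to obtain $\widehat{e_4}-\widehat{e_5}+\widehat{e_6}=\pm q^{-2}(e_4-e_5+e_6)$ with sign according to $q\equiv\pm1\pmod 3$, and then match this sign with $\psi(-27)=\psi(-3)$. Your Frobenius/alternating-group justification for $(c_4,c_5,c_6)=(1,-1,1)$ is a nice addition that the paper simply asserts.
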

\begin{proof}
If $x\in V$ is non-singular,
then $\chi\left(\Disc(x)\right)=1$ or $-1$
according as $x\in\co_{(111)}\cup\co_{(3)}$ or $x\in\co_{(2)}$.
Hence $\chi\left(\Disc(\cdot)\right)=e_4-e_5+e_6$ and
the left hand side of \eqref{eq:sym32-fourier-quadratic}
is $(\widehat{e_4}-\widehat{e_5}+\widehat{e_6})(y)$.
By Theorem \ref{thm:sym32-A}, we have
\[
\widehat{e_4}-\widehat{e_5}+\widehat{e_6}
=\pm q^{-2}\left(e_4-e_5+e_6\right)
=\pm q^{-2}\cdot\chi\left(\Disc(\cdot)\right)
\]
where the sign is according as $q\equiv\pm1\pmod 3$.
Since $\chi(-27)=\chi(-3)=\pm1$,
where again the sign is according as $q\equiv\pm1\pmod 3$,
we have the result.
\end{proof}

We illustrate an application of this formula to the
functional equation of the Shintani zeta function.
Let $n\neq 1$ be a square-free integer coprime to $6$,
and $\chi$ be the unique
primitive quadratic Dirichlet character modulo $n$.
By the Chinese remainder theorem, the similar formula
for \eqref{eq:sym32-fourier-quadratic} is true for $\chi$.
For each sign, we define
\[
\xi_\pm(s,\chi)
:=\sum_{\substack{x\in \gl_2(\Z)\backslash \sym^3(\Z^2)\\\pm\Disc(x)>0}}
\frac{\chi(\Disc(x))}{|\Stab(x)|}|\Disc(x)|^{-s}.
\]
This is a quadratic twist of the zeta function
introduced and studied by Shintani \cite{shintani}.
Note that we put $\chi(\Disc(x))=0$
if $\Disc(x)$ is not coprime to $n$.
It is shown in \cite{TT_L} that
these Dirichlet series $\xi_\pm(s,\chi)$
enjoy analytic continuation as entire functions to
the whole complex plane
and satisfy functional equations.
Corollary \ref{cor:sym32-quadratic-twist} may then be
applied to describe this functional equation explicitly.
Write
\begin{align*}
\xi_{\rm add}(s,\chi)&:=3^{1/2}\xi_+(s,\chi)+\xi_-(s,\chi),\\
\xi_{\rm sub}(s,\chi)&:=3^{1/2}\xi_+(s,\chi)-\xi_-(s,\chi),
\end{align*}
and
\begin{align*}
\Lambda_{\rm add}(s,\chi)&:=
\left(\frac{432n^4}{\pi^4}\right)^{s/2}
		\Gamma\left(\frac s2\right)\Gamma\left(\frac s2+\frac12\right)
		\Gamma\left(\frac{s}{2}-\frac1{12}\right)
		\Gamma\left(\frac{s}{2}+\frac1{12}\right)
	\xi_{\rm add}(s,\chi),\\
\Lambda_{\rm sub}(s,\chi)&:=
\left(\frac{432n^4}{\pi^4}\right)^{s/2}
		\Gamma\left(\frac s2\right)\Gamma\left(\frac s2+\frac12\right)
		\Gamma\left(\frac{s}{2}+\frac5{12}\right)
		\Gamma\left(\frac{s}{2}+\frac7{12}\right)
		\xi_{\rm sub}(s,\chi).
\end{align*}
Then similarly to \cite{O, TT_L},
Corollary \ref{cor:sym32-quadratic-twist}
combined with Datskovsky-Wright's diagonalization \cite{DW2}
and Nakagawa's dual identity \cite{N}
enables us to write the functional equation
of $\xi_\pm(s,\chi)$ in a self dual form:
\begin{corollary}\label{cor:ohno-nakagawa}
We have
\[
\Lambda_{\rm add}(s,\chi)=\Lambda_{\rm add}(1-s,\chi),
\qquad
\Lambda_{\rm sub}(s,\chi)=\Lambda_{\rm sub}(1-s,\chi).
\]
\end{corollary}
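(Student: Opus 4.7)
The plan is to combine three ingredients: the general functional equation for $\xi_\pm(s,\psi)$ established in the authors' earlier paper \cite{TT_L}, the explicit Fourier transform identity of Corollary \ref{cor:sym32-quadratic-twist}, and the Datskovsky-Wright diagonalization \cite{DW2} together with Nakagawa's dual identity \cite{N}. The functional equation from \cite{TT_L} expresses the dual zeta functions $\xi_\pm^\ast(1-s,\psi)$ as a $2\times 2$ matrix of archimedean gamma factors applied to the column vector $(\xi_+(s,\psi),\xi_-(s,\psi))^T$, multiplied by a finite-place scalar coming from the Fourier transforms of $\psi(\Disc(\cdot))$ modulo each $p \mid n$.

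Corollary \ref{cor:sym32-quadratic-twist} identifies each of these local factors as $p^{-2}\psi(\Disc^\ast(\cdot))$, which by the Chinese remainder theorem multiplies to $n^{-2}$ once one recognizes that the extra $\psi(\Disc^\ast(y))$ is precisely the integrand appearing in the dual zeta function. Next, the Datskovsky-Wright diagonalization \cite{DW2} rewrites $\xi_\pm^\ast$ as Dirichlet series over quadratic resolvent fields, and Nakagawa's identity \cite{N} then expresses them as rational-constant multiples of $\xi_\pm(1-s,\psi)$. Substituting these back converts the functional equation into a $2 \times 2$ identity entirely between $\xi_\pm(s,\psi)$ and $\xi_\pm(1-s,\psi)$, with coefficients that are pure ratios of gamma functions.

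The final step is to diagonalize this $2 \times 2$ matrix. As in \cite{O, TT_L}, its eigenvectors turn out to be proportional to $(3^{1/2},\pm 1)$, reflecting the ratio of orbit sizes at the real place between totally real and complex cubic types; this is exactly what motivates defining $\xi_{\rm add}$ and $\xi_{\rm sub}$. Once the eigenvalue has been absorbed into the completed $\Lambda$'s and one applies Legendre's multiplication formula to split the product $\Gamma(3s/2)\Gamma(3s/2+1/2)$ into the shifted gamma factors displayed in the statement, the two resulting equations are exactly $\Lambda_{\rm add}(s,\psi)=\Lambda_{\rm add}(1-s,\psi)$ and $\Lambda_{\rm sub}(s,\psi)=\Lambda_{\rm sub}(1-s,\psi)$. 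The main obstacle I anticipate is the local bookkeeping: checking that Nakagawa's identity remains compatible with the $\psi$-twist on resolvent rings whose discriminants are coprime to $n$, and tracing all constants (in particular the conductor $432 n^4 = 2^4 \cdot 3^3 \cdot n^4$) through the Legendre-duplication manipulations to confirm that the archimedean and finite-place contributions combine into a clean self-dual form.
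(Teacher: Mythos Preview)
Your proposal is correct and follows essentially the same approach as the paper: the paper does not give a detailed argument either, but simply states that the corollary follows ``similarly to \cite{O, TT_L}'' by combining Corollary~\ref{cor:sym32-quadratic-twist} with Datskovsky--Wright's diagonalization \cite{DW2} and Nakagawa's dual identity \cite{N}. Your outline fleshes out precisely these ingredients and the expected bookkeeping, so there is nothing substantively different to compare.
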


\section{$\Sym^2(2)$}\label{sec:sym22}
We now turn our attention to the easier case of binary quadratic forms.
Let $V = \Sym^2(\F_q^2)$ be the space of binary quadratic forms
in variables $u$ and $v$,
together with the action of $G=\GL_1(\F_q)\times\GL_2(\F_q)$ given by
\begin{equation}\label{eqn:act_bqf}
(g_1, g_2) \cdot x(u, v) = g_1 x((u, v) g_2).
\end{equation}
We write an element of $V$ as $x=x(u,v)=au^2+buv+cv^2=(a,b,c)$.
We let $\disc(x)=b^2-4ac$, and say $x$ is singular if $\disc(x)=0$.

Let $\chr(\F_q)\neq2$.
As usual, we identify $x$ with the two-by-two symmetric matrix
$A=\begin{bmatrix}a&{b/2}\\{b/2}&c\end{bmatrix}$, with $\disc(x)=-4\det A$.
Then the action of the $\gl_2(\F_q)$ part is given by
$(g_2,A)\mapsto g_2Ag_2^T$, while the $\gl_1(\F_q)$ part
acts by scalar mulitplication.

$V$ consists of four $G$ orbits, which we enumerate as follows.
(Rank is the rank as a symmetric matrix of any element in the orbit.)
$\co_1$ and $\co_2$ are the singular orbits.
\[
\begin{array}{crccc}
\text{Symbol}	&\text{Orbit name}	&\text{Representative}
	&\text{Rank}
	&\text{Orbit size}
\\
\hline
(0)	&\co_{(0)}=\co_1	&0	&0	&1
\\
(1^2)	& \co_{(1^2)}=\co_2	&v^2	&1	&q^2-1
\\
(11)	& \co_{(11)}=\co_3	&uv	&2	&\frac12q(q^2-1)
\\
(2)	& \co_{(2)}=\co_4	&u^2-lv^2&2	&\frac12q(q-1)^2
\end{array}
\]
Here
$l\in\F_q^\times$ denotes an arbitrary non-square element.
The proof is elementary and
we omit the details.
Note that the factor of $\GL_1$ is included to ensure that the
$G$-orbits of $v^2$ and $l v^2$ coincide.

We define a symmetric bilinear form on $V$ by
\begin{equation}\label{eq:sym22-bilinear-form}
[x,x']:=aa'+bb'/2+cc'.
\end{equation}
Then we have $[gx,g^{-T}x']=[x,x']$ and so
$(G,V)$ satisfies Assumption \ref{assmp:dual-identify}.

The counts of the $W \cap \calO_i$
for the following subspaces $W$ are immediately verified:
\[
\begin{array}{r||ccccc}
\hline
\text{Subspace}
&\co_{(0)}&\co_{(1^2)}&\co_{(11)}&\co_{(2)}\\
\hline
\{(0,0,0)\}
&1\\
\{(0,0,*)\}&1&q-1\\
\{(0,*,0)\}&1&&q-1\\
\{(0,*,*)\}&1&q-1&q^2 - q\\
\{(*,0,*)\}&1&2q-2&\frac12(q - 1)^2&\frac12(q - 1)^2\\
\{(*,*,*)\}
&1&|\co_{(1^2)}|&|\co_{(11)}|&|\co_{(2)}|\\
\hline
\end{array}
\]
Therefore by Proposition \ref{prop:A},
we immediately obtain the following.
(We in fact do not require the above counts for
$\{(0,*,0)\}$ and $\{(*,0,*)\}$.)
\begin{theorem}\label{thm:sym22-A}
Suppose $\chr(\F_q)\neq2$.
We have
\[
M=
\frac{1}{q^3}\cdot
\begin{bmatrix}
1&q^2-1&\frac12 q(q^2-1) &\frac12q(q-1)^2\\
1&-1&\frac12(q^2-q) &-\frac12(q^2-q)\\
1&q-1&-q&0\\
1&-(q+1)&0&q\\
\end{bmatrix}.
\]
\end{theorem}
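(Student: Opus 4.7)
The plan is to apply Proposition \ref{prop:A} to a collection of four subspaces whose normalized orbit-intersection vectors are linearly independent. Before doing so, I would verify that the bilinear form \eqref{eq:sym22-bilinear-form} satisfies Assumption \ref{assmp:dual-identify}. Identifying $x \in V$ with the symmetric matrix $A$ whose diagonal and off-diagonal entries are $a,c$ and $b/2$, the form equals $\tr(AA')$; the action \eqref{eqn:act_bqf} corresponds to $A \mapsto g_1 g_2 A g_2^T$, and a direct computation shows that the action $A' \mapsto g_1^{-1} g_2^{-T} A' g_2^{-1}$, which is precisely $(g_1,g_2)^{-T}$ acting on $V^\ast$, preserves $\tr(AA')$. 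Non-degeneracy of $\tr(AA')$ is immediate once $p \ne 2$, and I would take the involution $\iota$ to be trivial on each factor.

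Next I would justify the table of intersection counts. These are elementary factorization exercises: $\{(0,0,*)\} = \{cv^2\}$ contributes $1$ to $\co_{(0)}$ and $q-1$ to $\co_{(1^2)}$, while $\{(0,*,*)\} = \{v(bu+cv)\}$ contributes $1, q-1, q(q-1)$ to $\co_{(0)}, \co_{(1^2)}, \co_{(11)}$ according as $(b,c)=(0,0)$, $b=0 \ne c$, or $b\ne 0$. The remaining table entries are similar. Crucially, the dual subspaces $\{(*,0,0)\} = W_3^\perp$ and $\{(*,*,0)\} = W_1^\perp$ are $\GL_2$-equivalent to $\{(0,0,*)\}$ and $\{(0,*,*)\}$ respectively, via the transformation swapping $u$ and $v$, so their intersection counts with each $\co_i$ coincide with those already computed.

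Finally I would invoke Proposition \ref{prop:A} for the four subspaces $W_0 := \{0\}$, $W_1 := \{(0,0,*)\}$, $W_3 := \{(0,*,*)\}$, and $V$, obtaining four column-vector equations of the form $M v_W = (|W|/|V|) \, v_{W^\perp}$ in $\R^4$, where $v_W := (|\co_i \cap W|/|\co_i|)_i$. In the listed order, the nonzero pattern of the $v_W$ is lower-triangular, so they are linearly independent and the resulting $4\times 4$ system determines $M$ uniquely; solving it (by hand, or with the symbolic algebra the authors employ in later sections) produces the formula stated in the theorem. There is no genuine technical obstacle — this is exactly the scenario that Proposition \ref{prop:A} is engineered to handle, and the main bookkeeping task is matching dual subspaces to known counts. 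As a final sanity check, both conclusions of Lemma \ref{lem:A} should hold; in particular $-x$ and $x$ share the same $G$-orbit via the scalar $g_1 = -1$, which forces $M^2 = q^{-3} E_4$.
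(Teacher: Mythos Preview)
Your proof is correct and follows essentially the same route as the paper: verify Assumption~\ref{assmp:dual-identify} for the form~\eqref{eq:sym22-bilinear-form}, tabulate the intersection counts for the four subspaces $\{0\}$, $\{(0,0,*)\}$, $\{(0,*,*)\}$, $V$ (the paper lists two more but explicitly notes they are not needed), observe that the normalized count vectors are triangular hence independent, and solve the linear system coming from Proposition~\ref{prop:A}. One small slip: the involution $\iota$ is not trivial but rather $g\mapsto g^{-T}$ on each factor---you in fact use exactly this when you write the dual action as $(g_1,g_2)^{-T}$, so the argument is unaffected.
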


We can now provide an example where a complete analogue of Corollary \ref{cor:sym32-quadratic-twist}
is not guaranteed by Denef-Gyoja, and indeed is not true:
\begin{remark}\label{rem:sym22}
Let $\chi$ be the quadratic character on $\F_q^\times$.
Then since $\chi(\Disc(\cdot))=e_3-e_4$, its Fourier transform is
\[
\widehat{e_3}-\widehat{e_4}=(q^{-1}-q^{-2})(e_1+e_2)-q^{-2}(e_3+e_4).
\]
This function does not vanish on the singular set.
\end{remark}

\section{$\Sym^2(3)$}\label{sec:sym23}
Now, let $V = \Sym^2(\F_q^3)$ be the space of ternary quadratic forms in
variables $u$, $v$ and $w$,
together with the action of $G=\GL_1(\F_q)\times\GL_3(\F_q)$ given by
\[
(g_1, g_3) \cdot x(u, v, w) = g_1 x((u, v, w) g_3).
\]
We write an element of $V$ as
\[
x=x(u,v,w)=au^2+buv+cuw+dv^2+evw+fw^2=(a,b,c,d,e,f).
\]
We again assume $\chr(\F_q)\neq2$.
As in the previous section,
we identify $V$ as the space of symmetric matrices of size three.
Hence $x$ is identified with
$A=\begin{bmatrix}a&b/2&c/2\\b/2&d&e/2\\c/2&e/2&f\end{bmatrix}$.
We define $\disc(x)=-4\det(A)$, and say $x$ is singular if $\disc(x)=0$.

\begin{proposition}\label{prop:orbit_ternary}
Assume $\chr(\F_q)\neq 2$, and let $l\in\F_q^\times$
denote an arbitrary non-square element.
The action of $G$ on $V$ has five orbits,
of which four are singular. For each orbit, 
the table below lists orbital representatives,
the number of zeros in $\P^2(\F_q)$ and the rank as a symmetric matrix
of any element in the orbit,
and the size of the orbit.
\[
\begin{array}{ c r c c c c}
\textnormal{Symbol}
&\textnormal{Orbit name}
&\textnormal{Representative}
& \textnormal{Zeros}
& \textnormal{Rank}
& \textnormal{Orbit size}
\\ \hline
(0)
& \co_{(0)}=\co_1
& 0
& q^2 + q + 1
&0
& 1
\\
(1^2)
& \co_{(1^2)}=\co_2
& u^2
& q + 1
&1
& q^3 - 1
\\
(11)
& \co_{(11)}=\co_3
& uv
& 2q + 1
&2
&  q(q^3 - 1)(q + 1)/2
\\
(2)
& \co_{(2)}=\co_4
& u^2 - l v^2
& 1
&2
& q(q^3 - 1)(q - 1)/2
\\
\textnormal{ns}
& \co_{\rm ns}=\co_5
& u^2 - vw
& q + 1
&3
& q^2(q^3 - 1) (q - 1)
\\
\end{array}
\]
Here $l\in\F_q^\times$ is a non-square element.
\end{proposition}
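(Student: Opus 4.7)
The plan is to reduce the orbit classification to the classical theory of symmetric bilinear forms over $\F_q$ with $p\neq 2$, and then account separately for the $\GL_1$-scaling factor. Identifying $V$ with symmetric $3\times 3$ matrices, the $\GL_3(\F_q)$-action is $A\mapsto g_3 A g_3^T$, and diagonalization gives that the $\GL_3$-orbits are indexed by pairs (rank $r$, discriminant class in $\F_q^\times/(\F_q^\times)^2$ when $r\geq 1$), yielding $1+2+2+2=7$ orbits.

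To descend to $G$-orbits, I would observe that scaling by $g_1$ multiplies $A$ by $g_1$, and hence multiplies the non-zero part of the discriminant by $g_1^r$. For $r=1$, $g_1$ ranges freely over $\F_q^\times$, so the two discriminant classes merge. For $r=2$, $g_1^2$ is always a square, so both classes persist. For $r=3$, since $g_1^3=g_1\cdot g_1^2$ is a non-square whenever $g_1$ is, the set $\{g_1^3:g_1\in\F_q^\times\}$ contains both squares and non-squares, and the two classes merge. This gives $1+1+2+1=5$ orbits; the rank-$2$ dichotomy (split versus non-split) amounts to whether the non-degenerate restriction factors over $\F_q$ into two linear forms, yielding $uv$ and $u^2-lv^2$ as the representatives.

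The zero counts in $\P^2(\F_q)$ are immediate from the representatives: the zero form vanishes on all $q^2+q+1$ points of $\P^2(\F_q)$; $u^2$ on a line ($q+1$ points); $uv$ on two lines meeting at $[0:0:1]$ ($2q+1$ points); $u^2-lv^2$ only at $[0:0:1]$, since $l$ is a non-square; and $u^2-vw$ is a non-degenerate conic, parametrized by $\{[t:1:t^2]:t\in\F_q\}\cup\{[0:0:1]\}$, yielding $q+1$ points.

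For the orbit sizes I would count directly. A rank-$1$ form is $c\ell^2$ for $(c,\ell)\in\F_q^\times\times((\F_q^3)^\ast\setminus\{0\})$ modulo $(c,\ell)\sim(c\lambda^{-2},\lambda\ell)$, giving $(q-1)(q^3-1)/(q-1)=q^3-1$. A split rank-$2$ form is $\ell_1\ell_2$ for linearly independent $\ell_1,\ell_2$ modulo swap and rescaling, giving $(q^3-1)(q^3-q)/(2(q-1))=q(q+1)(q^3-1)/2$. For $|\co_{\rm ns}|$ I would use orbit-stabilizer against the general orthogonal group of the split ternary form $u^2-vw$, whose order is classical; alternatively one may use the known enumeration of non-singular symmetric $3\times 3$ matrices over $\F_q$. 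Finally $|\co_{(2)}|$ is forced by $q^6=\sum_i|\co_i|$. The main obstacle is the stabilizer computation for $\co_{\rm ns}$, but this can be bypassed by taking the global count.
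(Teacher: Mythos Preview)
Your argument is correct and takes a genuinely different route from the paper. The paper proceeds by hand: it completes the square to diagonalize, and then for the full-rank case $\lambda_1 u^2 - \lambda_2 v^2 - \lambda_3 w^2$ it uses the $\GL_1$-factor to set $\lambda_1=1$ and an explicit identity $(v+\alpha w)^2+(w-\alpha v)^2=(1+\alpha^2)(v^2+w^2)$ to show that all such forms are equivalent to $u^2-vw$. You instead invoke the classical classification of quadratic forms over $\F_q$ by (rank, discriminant class) and then give a uniform parity argument: scaling by $g_1$ multiplies the discriminant by $g_1^r$, so the two discriminant classes merge precisely when $r$ is odd. This is cleaner and more conceptual, at the cost of citing the classification as a black box; the paper's argument is self-contained but ad hoc in the rank-$3$ step.

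For the orbit sizes the two approaches are dual in an amusing way: the paper computes $|\co_{(11)}|$ and $|\co_{(2)}|$ via stabilizers (by reducing to the $\Sym^2(2)$ case already treated) and obtains $|\co_{\rm ns}|$ by subtraction, whereas you parametrize $\co_{(1^2)}$ and $\co_{(11)}$ directly, propose a stabilizer computation for $\co_{\rm ns}$, and recover $|\co_{(2)}|$ by subtraction. Either works; note that your ``obstacle'' for $\co_{\rm ns}$ can be sidestepped exactly as the paper does, by computing it last as $q^6$ minus the other four.
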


\begin{proof}
This is well known (see e.g. \cite{elkies}),
and for the sake of completeness we include a proof.

To prove that there are five orbits, start with an arbitrary $x \in V$ and complete the square to
get rid of any off-diagonal terms. If $x$ is not $G$-equivalent to one of the first four representatives it must be of the form
$\lambda_1 u^2 - \lambda_2 v^2 - \lambda_3 w^2$
with $\lambda_1 \lambda_2 \lambda_3 \neq 0$, and indeed with
$\lambda_1 = 1$ after 
multiplying by $\lambda_1^{-1} \in \GL_1(\F_q)$.
Now, if $\lambda_2$ or $\lambda_3$ is a square element,
then this form is visibly equivalent
to $u^2 - vw$. Otherwise, by a suitable $\GL_3(\F_q)$ translation we may assume that $\lambda_2 = \lambda_3$. Moreover, for each 
$\alpha \in \F_q^{\times}$ we have that $y^2 + z^2$ is equivalent to 
$(v + \alpha w)^2 + (w - \alpha v)^2 = (1 + \alpha^2) (v^2 + w^2)$; as $1 + \alpha^2$ cannot be a square element for every $\alpha$,
we see that $\lambda_2(v^2 + w^2)$ is equivalent to $v^2 + w^2$.

Since the number of $\F_q$-rational zeros and the rank are invariants of the orbits, the orbits listed above are seen to all be distinct.
 
The first two orbit sizes are very easy to compute; 
the next two are most easily computed by observing that the stabilizer size
is $q^3 - q^2$ times the analogous stabilizer size in $\Sym^2(2)$, as any $g_3 \in \GL_3$ in the stabilizer may send
$w$ to any $a_1 u + a_2 v + a_3 w$ with $a_3 \neq 0$.
The final orbit size is most easily computed by subtracting the
first four orbit sizes from $q^6$.
\end{proof}

We define a symmetric bilinear form on $V$ by
\begin{equation}\label{eq:sym23-bilinear-form}
[x,x']:=aa'+bb'/2+cc'/2+dd'+ee'/2+ff'.
\end{equation}
Then we have $[gx,g^{-T}x']=[x,x']$ and so
$(G,V)$ satisfies Assumption \ref{assmp:dual-identify}.

We come now to the computations of $|W \cap \calO_i|$ for suitable $W$.
In the table below, we write
$\times \alpha$ as a shorthand for $\alpha |\calO_i|$.
\[
\begin{array}{r||cccccc}
\hline
\multicolumn{1}{c||}{\text{Subspace}}
&\co_{(0)}&\co_{(1^2)}&\co_{(11)}&\co_{(2)}&\co_{\ns}\\
\hline
W_0=\{(0,0,0,0,0,0)\}
&1\\
W_1=\{(0,0,0,0,0,*)\}
&1&q-1\\
W_2=\{(0,0,0,*,0,*)\}
&1&2q-2&\frac{(q - 1)^2}{2}&\frac{(q - 1)^2}{2}\\
W_3=\{(0,0,*,*,*,*)\}
&1&q^2-1&\frac12q(3q^2-2q-1)&\frac12q(q-1)^2& q^2(q-1)^2\\
W_1^\perp=\{(0,*,*,*,*,*)\} &
1&
\times \frac{q + 1}{q^2 + q + 1}&
\times \frac{2q + 1}{q^2 + q + 1}&
\times \frac{1}{q^2 + q + 1}&
\times \frac{q + 1}{q^2 + q + 1}
\\
W_2^\perp=\{(0,*,*,0,*,*)\} &
1&
\times \frac{q + 1}{q^2 + q + 1} \cdot \frac{q}{q^2 + q}&
\times \frac{2q + 1}{q^2 + q + 1} \cdot \frac{2q}{q^2 + q}&
0&
\times \frac{q + 1}{q^2 + q + 1} \cdot \frac{q}{q^2 + q}
\\
V=\{(*,*,*,*,*,*)\}
&1&\times 1&\times 1&\times 1&\times 1\\
\hline
\end{array}
\]
The map
\[
du^2 + euv + fv^2 \longmapsto 0u^2 + 0uv + 0uw + dv^2 + evw + fw^2
\]
is an embedding $\Sym^2(2) \rightarrow \Sym^2(3)$,
and as $a = b = c = 0$ for $W_0$, $W_1$,
and $W_2$ the counts coincide with those previously
given for $\Sym^2(2)$. 
For $W_3$, let $x=cuw+dv^2+evw+fw^2\in W_3$.
This is non-singular if and only if $cd\neq0$.
Hence there are $q^2(q-1)^2$ elements of $\co_\ns$.
The count for $c=0$ follows from the whole of $\Sym^2(2)$,
and for $d=0$ follows immediately.
For $W_1^\perp$ and $W_2^\perp$,
we are counting the number of elements of each $\calO_i$ with,
respectively, having $[1:0:0]$ as a zero,
and having $[1:0:0]$ and $[0:1:0]$ as zeros.
As $\GL_3(\F_q)$ acts $4$-transitively on $\P^2(\F_q)$,
the proportion of elements of each $\calO_i$ in
$W_1^\perp$ and $W_2^\perp$ is respectively
$\frac{ \# Z_x(\F_q) }{ \# \P^2(\F_q) }$ and
$\frac{ \# Z_x(\F_q)  (\# Z_x(\F_q) - 1) }
{ \# \P^2(\F_q)  (\# \P^2(\F_q) - 1) }$
for any $x\in\co_i$
where $Z_x\subset\P^2$ is the conic defined by $x$,
and these quantities $\# Z_x(\F_q)$ were enumerated above.

The above vectors span $\R^5$,
and as before by Proposition \ref{prop:A} we conclude:
\begin{theorem}\label{thm:sym23-A}
Suppose $\chr(\F_q)\neq2$.
We have
\[
M=\frac{1}{q^6}
\begin{bmatrix}
1&
q^3-1&
q(q+1)(q^3-1)/2&
q(q-1)(q^3-1)/2&
q^2(q-1)(q^3-1)\\
1&
-1&
q(q+1)(q^2-1)/2&
-q(q-1)(q^2+1)/2&
-q^2(q-1)\\
1&
q^2-1&
q(q^2-2q-1)/2&
q(q-1)^2/2&
-q^2(q-1)\\
1&
-q^2-1&
q(q^2-1)/2&
q(q^2+1)/2&
-q^2(q-1)\\
1&
-1&
-q(q+1)/2&
-q(q-1)/2&
q^2
\end{bmatrix}.
\]
\end{theorem}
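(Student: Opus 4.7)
My plan is to apply Proposition \ref{prop:A} with a family of subspaces $W \subseteq V$ whose intersections with the five $G$-orbits on $V$ can be computed directly, and then read off $M$ by inverting the resulting linear system. Since there are $r = 5$ orbits, I need at least five subspaces $W$ whose count vectors $(|\co_i \cap W|/|\co_i|)_{i=1}^{5}$ are linearly independent in $\R^5$. The seven subspaces $W_0, W_1, W_2, W_3, W_1^\perp, W_2^\perp, V$ displayed above are convenient, and moreover four of them come in orthogonal pairs $(W_0, V)$, $(W_1, W_1^\perp)$, $(W_2, W_2^\perp)$ under the bilinear form \eqref{eq:sym23-bilinear-form}, which lets me read both sides of \eqref{eq:subspace-fourier-average-selfdual} from a single table.

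First I would establish the orbit description in Proposition \ref{prop:orbit_ternary}, in particular the counts of $\F_q$-zeros in $\P^2(\F_q)$, since these drive the computations for $W_1^\perp$ and $W_2^\perp$. The rows for $W_0, W_1, W_2$ can then be lifted directly from Section \ref{sec:sym22}, because the embedding $\Sym^2(2) \hookrightarrow \Sym^2(3)$ sending $(d,e,f) \mapsto (0,0,0,d,e,f)$ identifies each of these subspaces with the corresponding subspace in the binary case, and matches the binary orbits with the ternary ones of the same symbol. For $W_1^\perp$, up to a $\GL_3(\F_q)$ change of coordinates this is the space of ternary forms vanishing at $[1:0:0]$; by transitivity of $\GL_3(\F_q)$ on $\P^2(\F_q)$ the fraction of $\co_i$ inside $W_1^\perp$ equals (the number of $\F_q$-zeros of any $x \in \co_i$) divided by $|\P^2(\F_q)| = q^2+q+1$. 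The subspace $W_2^\perp$ is analogous, using $2$-transitivity of $\GL_3(\F_q)$ on ordered pairs of distinct points of $\P^2(\F_q)$.

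The most delicate row is $W_3 = \{(0,0,c,d,e,f)\}$, because $W_3$ is neither a block subform nor a vanishing-at-a-point locus, and all five orbits can intersect it. For $x = cuw + dv^2 + evw + fw^2$ the associated symmetric matrix has determinant (up to the usual $1/2$ convention) proportional to $c^2 d$, so $x$ is nonsingular precisely when $cd \neq 0$, contributing $q^2(q-1)^2$ elements to $\co_{\ns}$. The $c = 0$ slice reduces to the $\Sym^2(2)$ case, while the $c \neq 0$, $d = 0$ slice factors as $w(cu + ev + fw)$ and is immediately seen to lie in $\co_{(1^2)} \cup \co_{(11)}$; the split between these two orbits is determined by whether the two linear factors are proportional. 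This is the main obstacle, but it is a bookkeeping exercise rather than a conceptual one.

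Once all seven row vectors are in hand, Proposition \ref{prop:A} yields, for each $W$, the identity
\[
M \cdot \Bigl(\tfrac{|\co_j \cap W|}{|\co_j|}\Bigr)_{j}^{\!\top} = \tfrac{|W|}{|V|} \cdot \Bigl(\tfrac{|\co_i \cap W^\perp|}{|\co_i|}\Bigr)_{i}^{\!\top}.
\]
Collating five linearly independent such relations into a $5 \times 5$ linear system and inverting recovers $M$. I would use the pairs $(W_0, V)$, $(W_1, W_1^\perp)$, $(W_2, W_2^\perp)$ together with $W_3$ to obtain a well-conditioned system; linear independence of the five count vectors is a quick determinant check. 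As a sanity test I would then verify that the resulting $M$ satisfies both conclusions of Lemma \ref{lem:A}, noting that $-x$ and $x$ always lie in the same $G$-orbit here since $-1$ lies in the $\GL_1(\F_q)$-factor, so $M^2 = q^{-6} E_5$ must hold.
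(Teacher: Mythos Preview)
Your approach is essentially identical to the paper's: the same seven subspaces, the same use of the $\Sym^2(2)\hookrightarrow\Sym^2(3)$ embedding for $W_0,W_1,W_2$, the same zero-counting via transitivity for $W_1^\perp,W_2^\perp$, and the same $cd\neq 0$ analysis for $W_3$. One minor point: the paper's actual linear system uses the five subspaces $W_0,W_1,W_2,W_1^\perp,V$ (whose count vectors are already independent), so the $W_3$ row is in fact redundant; if you do want to use $W=W_3$ in \eqref{eq:subspace-fourier-average-selfdual} you will also need the counts for $W_3^\perp=\{(*,*,0,0,0,0)\}$, which you have not listed but which are immediate.
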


\section{$2\otimes\Sym^2(2)$}\label{sec:2sym22}

Next we investigate the space $V$ of pairs of binary quadratic forms.
As before, we assume $\chr(\F_q)\neq2$.
Let $V:=\F_q^2\otimes \sym^2(\F_q^2)$ be the space
of pairs of binary quadratic forms.
We write an element of $V$ as follows:
\begin{equation}\label{eqn:def_2sym2}
x=(A,B)=(au^2+buv+cv^2,du^2+euv+fv^2)=
\begin{bmatrix}a&b&c\\ d&e&f\end{bmatrix}
\end{equation}
Here, $A=\twtw{a}{b/2}{b/2}{c}$ and 
$B=\twtw{d}{e/2}{e/2}{f}$
are
the symmetric matrices representing the respective binary quadratic forms.
Let $G_1=G_2=\gl_2(\F_q)$ and $G:=G_1\times G_2$.
The group action of $G$ is defined by
\[
\left(\twtw \alpha\beta\gamma\delta, g_2\right) \circ (A, B)
=(\alpha g_2 A g_2^T + \beta g_2 B g_2^T,
\gamma g_2 A g_2^T + \delta g_2 B g_2^T).
\]
To investigate this representation,
it is useful to associate to each $x\in V$ its
{\em quadratic resolvent} $r_x\in\sym^2(\F_q^2)$,
defined by
\[
r_x(u,v):=-4\det(Au+Bv)=(bu+ev)^2-4(au+dv)(cu+fv).
\]
Then we can see easily from the definition that
$r_{(g_1,g_2)\circ x}=(\det g_2)^2(g_1\circ r_x)$,
where the action of $g_1 \in \GL_2(\F_q)$ on $\sym^2(\F_q)$
is as in (the $\GL_2(\F_q)$ component of)
\eqref{eqn:act_bqf}.
We define $\Disc(x):=\Disc(r_x)$,
and say $x$ is singular if $\Disc(x)=0$.

We first study its orbit decomposition.
\begin{proposition}\label{prop:orbit-2sym22}
Assume $\chr(\F_q)\neq2$. There are seven orbits over $\F_q$, of which the first five are singular.
For each orbit, the following table lists orbital representatives,
the rank, the label of the quadratic resolvent of any element in the orbit,
and the size of the orbit.
Here the {\em rank} of $x=(A,B)\in V$ 
is the rank of the matrix in \eqref{eqn:def_2sym2}.
\[
\begin{array}{rcccc}
\text{\rm Orbit name}
&\text{\rm Representative}
&\text{\rm Rank}
&\text{\rm Resolvent}
&\text{\rm Orbit size}
\\
\hline
\co_{(0)}=\co_1
&
(0,0)
&0
&(0)
&1
\\
\odr=\co_2
&
(0,v^2)
&1
&(0)
&(q - 1)(q+1)^2
\\
\ods=\co_3
&
(0,uv)
&1
&(1^2)
&q(q-1)(q+1)^2/2
\\
\odi=\co_4
&
(0,u^2-lv^2)
&1
&(1^2)
&q(q-1)^2(q+1)/2
\\
\ocs=\co_5
&
(v^2,uv)
&2
&(1^2)
&q(q^2 - 1)^2
\\
\ots=\co_6
&
(v^2,u^2)
&2
&(11)
&(q^3 - q)^2/2
\\
\oti=\co_7
&
(uv,u^2+lv^2)
&2
&(2)
&(q^2 - q)^2 (q^2 - 1)/2
\\
\end{array}
\]
Here $l\in\F_q$ is a non-square element.
\end{proposition}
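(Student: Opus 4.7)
My plan is to stratify $V$ by two easily-computed $G$-invariants: the rank of $x$ viewed as a $2\times 3$ matrix, and the $\GL_1(\F_q) \times \GL_2(\F_q)$-orbit of the resolvent $r_x \in \Sym^2(\F_q^2)$. The latter is indeed an invariant because $r_{(g_1,g_2) \circ x} = (\det g_2)^2 (g_1 \circ r_x)$, and the resolvent orbits are precisely the four orbits $(0), (1^2), (11), (2)$ of $\Sym^2(\F_q^2)$ classified in Section \ref{sec:sym22}. Inspection of the claimed table shows that these two invariants together separate all seven orbits, so it suffices to prove transitivity within each combination.

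For rank $\leq 1$ pairs I would write $(A,B) = (\alpha Q, \beta Q)$ with $Q \in \Sym^2(\F_q^2) \setminus \{0\}$ and $(\alpha,\beta) \neq 0$, uniquely determined up to the scaling $(\alpha,\beta,Q) \mapsto (t\alpha,t\beta,t^{-1}Q)$ for $t \in \F_q^\times$. Since $G_1 = \GL_2(\F_q)$ acts transitively on $\F_q^2 \setminus \{0\}$, I may assume $(\alpha,\beta)=(0,1)$, so that $A = 0$. The subgroup of $G$ preserving $A = 0$ consists of pairs $(g_1, g_2)$ with $g_1 = \twtw{\alpha'}{0}{\gamma'}{\delta'}$, which then act on $Q$ by $Q \mapsto \delta' g_2 Q g_2^T$; this is exactly the $\GL_1(\F_q) \times \GL_2(\F_q)$-action on $\Sym^2(\F_q^2)$ of Section \ref{sec:sym22}. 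Thus the rank-$1$ $G$-orbits are in bijection with the three nonzero orbits $(1^2), (11), (2)$, yielding the stated representatives for $\odr, \ods, \odi$. For orbit sizes, a rank-$1$ pair corresponds uniquely to an element of $\F_q^2 \setminus \{0\}$ together with a nonzero $Q$ modulo the $(q-1)$-fold scaling, so each $\Sym^2$-orbit of size $s$ lifts to $(q+1)s$ rank-$1$ pairs, matching the listed cardinalities.

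For rank-$2$ pairs the resolvent is nonzero and I claim each of the three nonzero resolvent classes contains a single $G$-orbit. In the $(11)$ case, $r_x$ has two distinct roots $[u_i:v_i] \in \P^1(\F_q)$, which are exactly the $[u:v]$ for which $uA+vB$ is singular; applying $G_1$ moves these to $[1:0]$ and $[0:1]$, making both $A$ and $B$ rank $1$ and linearly independent, and then $G_2$ normalizes $A$ to $v^2$ and brings $B$ to $u^2$, giving the representative of $\ots$. The $(2)$ case is handled by running the same argument over $\F_{q^2}$ and then descending by Galois, producing $(uv,u^2+lv^2)$ for $\oti$; one checks directly that its resolvent $u^2-4lv^2$ is anisotropic over $\F_q$. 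In the $(1^2)$ case with rank $2$, the unique double root of $r_x$ may be placed at $[1:0]$, so $A$ is the only singular pencil element up to scalar; since the total rank is $2$, $A$ has rank exactly $1$, and normalizing $A$ to $v^2$ while cleaning $B$ via the stabilizer of $v^2$ in $\GL_2$ yields the representative $(v^2, uv)$ for $\ocs$.

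The expected obstacle is the orbit-size computation for the three rank-$2$ orbits. I would determine these by computing stabilizers in $G$ of each representative, tracking the induced action on the pencil and its distinguished singular members. For $\ots$ and $\oti$ an extra $\Z/2$-symmetry arises from swapping the two distinguished singular members of the pencil (respectively two rank-$1$ forms over $\F_q$, or two conjugate rank-$1$ forms over $\F_{q^2}$), producing the factor of $2$ in the denominators; for $\oti$ the anisotropic quadratic form $u^2+lv^2$ contributes the extra factor of $q+1$ present in $|\oti|=(q^2-q)^2(q^2-1)/2$. As a final cross-check, I would verify that the seven orbit sizes sum to $|V|=q^6$.
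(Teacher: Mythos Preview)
Your overall strategy is sound and close in spirit to the paper's, but there is a factual error in your opening framing and a genuine gap in the rank-$2$ case with resolvent type $(2)$.

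First, the framing: you assert that rank together with the resolvent type separate all seven orbits. This is false. For any rank-$1$ element $(0,Q)$ one has $r_x(u,v) = -4\det(Qv) = -4(\det Q)\,v^2$, so the resolvent is of type $(0)$ when $Q$ has rank $\leq 1$ and of type $(1^2)$ whenever $Q$ has rank $2$, regardless of whether $Q$ is split or anisotropic. Thus $\ods$ and $\odi$ both have rank $1$ and resolvent type $(1^2)$, exactly as the table states. Your actual rank-$1$ argument is fine, because there you use the $\GL_1\times\GL_2$-orbit of $Q$ itself rather than the resolvent; but you should not conflate the two invariants, and the sentence ``it suffices to prove transitivity within each combination'' is not justified by the invariants you named.

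Second, the $(2)$ case: the phrase ``running the same argument over $\F_{q^2}$ and then descending by Galois'' is not a proof of transitivity over $\F_q$. Over $\F_{q^2}$ you correctly land in a single orbit, but the $G(\F_q)$-orbits inside that geometric orbit are parametrized by $H^1(\F_q,\Stab)$, and the stabilizer of $(v^2,u^2)$ in $G$ is disconnected (it contains a $\Z/2$ swapping the two rank-$1$ members of the pencil). So $|H^1|$ is a priori $1$ or $2$; you would need to compute it and then identify which class carries resolvent type $(2)$. This can be done (in fact $|H^1|=2$, with $\ots$ and $\oti$ as the two $\F_q$-forms), but you have not done it. The paper avoids cohomology entirely: it brings any rank-$2$ pair with invertible left $2\times 2$ block, by explicit $G_1$- and $G_2$-moves, to the shape $(uv,u^2+fv^2)$ and then splits on whether $f$ is zero, a square, or a nonsquare. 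That direct reduction is more elementary and self-contained.

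For the orbit sizes, your stabilizer approach would work but differs from the paper, which defers the counts and reads them off from the row $W_{[3,3]}=V$ of the subspace-counting tables developed later in the same section.
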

\begin{remark}
The subscripts $D$ and $C$ indicate respectively that
$x$ is doubled (i.e., rank $1$) or has a common component.
The subscript $B$ (binary) is chosen to be consistent with
the quartic case $V=2\otimes\sym^2(3)$.
\end{remark}
\begin{proof}
We first show that any $x=(A,B)=\pb abcdef\in V$ is $G$-equivalent to
one of the seven elements above.
We write $x\sim x'$ if $x,x'\in V$ are in the same $G$-orbit.
If $A$ and $B$ are linearly dependent,
then by the action of $G_1$,
we may let $A=0$. By our results for $\sym^2(2)$,
$x$ is equivalent to one of the first four elements.
So suppose $A$ and $B$ are linearly independent.
If the left side 2-by-2 matrix $\twtw abde$
of $x$ is invertible (as a matrix),
then by a $G_1$ translation,
we can move
$x$ to $\pb 01{c'}1{e'}{f'}$.
By a $G_2$ translation,
we move this to $\pb 0101{e''}{f''}$ and
then again by $G_1$
we further erase $e''$,
and thus assume $x=(uv,u^2+fv^2)$.
Observe that $(uv,u^2+fv^2)\sim (tuv,u^2+ft^2v^2)\sim (uv,u^2+ft^2v^2)$
for $t\in\F_q^\times$, $(uv,u^2+fv^2)\sim(uv,u^2+lv^2)$ if $f$ is non-square.
If $f$ is square, $(uv,u^2+fv^2)\sim (uv,u^2+v^2)$
and is further $\sim (u^2+v^2+2uv,u^2+v^2-2uv)\sim (v^2,u^2)$.
Suppose $\twtw abde$ is not invertible (it cannot be zero).
By $G_1$,
we may let $x$ be either $\pb 00c01f$ or $\pb 00c1ef$.
In the latter case we use $\twtw 10{-e/2}1\in G_2$ to move it
to $\pb 00c10{f'}$.
Since $A$ and $B$ are independent, $c\neq0$ and so
$x$ is equivalent to $\pb001010=(v^2,uv)$ or $\pb001100=(v^2,u^2)$,
respectively.

We next prove that the seven elements are in different orbits.
We compare their ranks and the quadratic resolvent:
if $x,y\in V$ are in the same orbit, then
${\rm rank}(x)={\rm rank}(y)$ and also
$r_x,r_y\in\sym^2(\F_q^2)$ are of the same type.
To conclude it is enough to check that
$(0,v^2)$ and $(0,u^2-lv^2)$ are not in the same orbit.
Our assertion follows from the results of $\sym^2(2)$,
because $x=(0,B)$ and $x'=(0,B)$ with $B\neq0$, $B'\neq0$ are
in the same $G$-orbit iff $B,B'\in\sym^2(\F_q^2)$ lie in the same
$\gl_1(\F_q)\times\gl_2(\F_q)$-orbit.

Thus we have shown that there are exactly seven orbits, and 
the orbit counts will be proved later in this section.
\end{proof}

Writing the symmetric bilinear form \eqref{eq:sym22-bilinear-form}
on $\sym^2(\F_q^2)$ by $[A,A']'$, we define
a symmetric bilinear form on $V$ by
\begin{equation}\label{eq:2sym22-bilinear-form}
[(A, B), (A', B')] = [A, A']' + [B, B']'.
\end{equation}
Then we have $[gx,g^{-T}x']=[x,x']$
(recall that $(g_1, g_2)^T := (g_1^T, g_2^T)$)
and so
$(G,V)$ satisfies Assumption \ref{assmp:dual-identify}.

We come now to our orbit counts.
Anticipating the more difficult computations in the quartic case,
we introduce a new method for the computations
which enables us to work inductively (and rather systematically).
By subtracting the results of previously handled computations,
we may assume that certain
of the coordinates are nonzero, and then apply
$G$-transformations to obtain a map to a set $Y$ for which the 
$|Y \cap \co_i|$ are more easily counted.
For $X\subset V$, we find it convenient to use the same symbol
$X$ to denote the vector $(X\cap\co_i)_i\in\R^7$.
For example,
\[
(X\cup Y)=X+Y-(X\cap Y)
\]
is an identity in the vector space $\R^7$,
representing the inclusion-exclusion principle.

For $i,j\in\{0,1,2,3\}$, let $\sbs ij$
be the subspace consisting of pairs of forms $(A, B)$ such that
the last $i$ entries of $A$ and the last $j$ entries of $B$ are arbitrary,
and other entries are $0$.
For example,
\[
\sbs00=\{0\},
\quad
\sbs13=\left\{\begin{bmatrix}0&0&*\\ *&*&*\end{bmatrix}\right\},
\quad
\sbs22=\left\{\begin{bmatrix}0&*&*\\ 0&*&*\end{bmatrix}\right\},
\quad
\sbs33=\left\{\begin{bmatrix}*&*&*\\ *&*&*\end{bmatrix}\right\}
=V.
\]
Of course some coordinate subspaces of $V$,
such as $\left\{\pb 00\ast\ast\ast0\right\}$, are not of this form,
but here it suffices to consider only such subspaces.
We count $\sbs ij\cap \co_k$ for all
$0\leq i\leq j\leq 3$
(there are ten of them),
because we need this result when studying $2\otimes\sym^2(3)$
in the next section.
(However, as we observe when proving Theorem \ref{thm:2sym22-A},
counts for a certain seven of $\sbs ij$ are enough
to determine $A$.)
Note that $\eta\cdot \sbs ij^\perp=\sbs{3-j}{3-i}$
where $\eta=\left(\twtw 0110,\twtw0110\right)\in G$,
so that $\sbs ij^\perp =\sbs{3-j}{3-i}$
(not literally as subspaces of $V$, but rather
as in the sense described previously).

If $1\leq i<j$, then let $\sbs ij^\times$
be the elements in $\sbs ij$
whose first $*$'s are non-zero in each row.
For example,
\[
\sbs12^\times=
\left\{\begin{bmatrix}0&0&c\\ 0&e&*\end{bmatrix}\ \vrule\ c,e\neq0\right\},
\qquad
\sbs13^\times=
\left\{\begin{bmatrix}0&0&c\\ d&*&*\end{bmatrix}\ \vrule\ c,d\neq0\right\}.
\]
Then by inclusion-exclusion,
\begin{equation}\label{eq:wij}
\sbs ij
=\sbs ij^\times +\sbs {i-1}j+\sbs i{j-1} -\sbs{i-1}{j-1}.
\end{equation}
If $2\leq i=j$, then let $\sbs ii^\times$
be the elements $x$ in $\sbs ii$
whose leftmost 2-by-2 matrix $\twtw \ast\ast\ast\ast$ 
of $x$ is invertible.
For example,
\[
\sbs22^\times=
\left\{\begin{bmatrix}0&b&c\\ 0&e&f\end{bmatrix}\ \vrule\ bf-ce\neq0\right\},
\\
\qquad
\sbs33^\times=
\left\{\begin{bmatrix}a&b&*\\ d&e&*\end{bmatrix}\ \vrule\ ae-bd\neq0\right\},
\]
Then in this case we have
\begin{equation}\label{eq:wii}
\sbs ii
=\sbs ii^\times +(q+1)\cdot \sbs {i-2}i-q\cdot \sbs{i-2}{i-2}.
\end{equation}
To verify this, let $W'_{[i, i]}$ denote the subset of $W_{[i, i]}$
whose leftmost two-by-two matrix $M$ is of rank $1$, and $W'_{[i-2,i]}=W'_{[i, i]}\cap W_{[i-2, i]}$;
it suffices to describe an orbit preserving map 
$W'_{[i, i]} \rightarrow W'_{[i - 2, i]}$
which is precisely $(q + 1)$-to-one.
This is most easily described as a bijection
$\P^1(\F_q) \times W'_{[i - 2, i]} \rightarrow W'_{[i, i]}$:
we map $([\mu : 1], x)$ to $(\twtw 1 \beta 0 1, \twtw 1001) \cdot x$ and
$([1 : 0], x)$ to $(\twtw 0 1 1 0, \twtw 1001) \cdot x$.

Hence provided that we have counted for smaller $i$ and $j$,
our count of each $\sbs ij$ is reduced to that of $\sbs ij^\times$,
by \eqref{eq:wij} or \eqref{eq:wii}.
Our induction process may be illustrated in the following diagram:
\[
\xymatrix@R=5mm@C=7mm{
&\sbs 22\ar[r]
&\sbs 23
\\
\sbs 11\ar[r]
&\sbs 12\ar[ur]\ar[r]
&\sbs 13\ar[u]
\\
\sbs 01\ar[ur]
&\sbs 02\ar[u]\ar[ur]
&\sbs 03\ar[u]
}
\qquad
\xymatrix@R=5mm@C=7mm{
\\
&\sbs 22
\\
\sbs 00\ar[ur]
&\sbs 02\ar[u]
}
\qquad
\xymatrix@R=5mm@C=7mm{
\\
&\sbs 33
\\
\sbs 11\ar[ur]
&\sbs 13\ar[u]
}
\]
At the beginning we need to know the counts
for $\sbs 0j$ for all $j$ and $\sbs11$.
The former is contained in our results for $\sym^2(2)$,
while the latter is trivial. We have:
\[
\begin{array}{r||ccccccc}
\hline
\text{Subspace}
&\co_0&\odr&\ods&\odi&\ocs&\ots&\oti\\
\hline
\sbs00
&1\\
\sbs01
&1&q-1\\
\sbs02
&1&q-1&q(q-1)\\
\sbs03
&1&q^2-1&\frac12q(q^2-1)&\frac12q(q-1)^2\\
\sbs11
&1&q^2-1\\
\hline
\end{array}
\]
We now examine 
$\sbs12^\times,
\sbs13^\times,
\sbs22^\times,
\sbs23^\times
$
and
$\sbs33^\times$.
For $\sbs 13^\times$, we have
\begin{equation}\label{eqn:w13}
\sbs 13^\times
=(q-1)^2\cdot\pb 0011**
=q^2(q-1)^2\cdot\pb 001100
\end{equation}
so that all of these elements are in $\ots$.
The reduction \eqref{eqn:w13} is proved as follows.
There is a bijection $\F_q^{\times} \times \F_q^{\times} \times \{ \pb 0011** \} \rightarrow W_{[1, 3]}^{\times}$, 
given by $(\lambda, \mu, \pb 0011ab) \rightarrow
({\twtw \lambda 0 0 \mu}, \twtw 1001) \cdot
 \pb 0011ab = 
\pb 00\lambda\mu{\mu a}{\mu b}$, and this yields the first equality. Then, there is a bijection
$\F_q \times \{ \pb 00110* \}  \rightarrow \{ \pb 0011** \}$, given by
$(\lambda, \pb 00110a) \rightarrow
(\twtw 1001, {\twtw 1 0 \lambda 1} ) \cdot \pb 00110a = \pb 0011{2\lambda}{\lambda^2 + a}$.
Finally, there is a bijection
$\F_q \times \{ \pb 001100 \}  \rightarrow \{ \pb 00110* \}$, given by
$(\lambda, \pb 001100) \rightarrow
(\twtw 10\lambda1, {\twtw 1 0 0 1} ) \cdot \pb 001100 = \pb 00110\lambda$.

Put together these three bijections prove \eqref{eqn:w13}; note that each factor of $\F_q$ or $\F_q^{\times}$
determined an element of $G(\F_q)$. Each of these three
steps illustrates a reduction which we will use quite frequently in our analysis, and we will
usually leave similar such verifications to the reader.

For $\sbs 33^\times$, we can similarly prove that
\[
\sbs 33^\times
=(q^2-1)(q^2-q)\cdot\pb 01*10*
=(q^2-1)(q^2-q)\cdot\pb 0101**
=q(q^2-1)(q^2-q)\cdot\pb 01010*;
\]
the
second equality may be verified by observing that for each $\lambda\in\F_q$
$(\twtw 1001, \twtw10\lambda1)$ gives a bijective map from
$\pb01{-\lambda}10*$ to $\pb0101{2\lambda}*$.

The element $\pb 01010f$ is, according as
$f$ is zero, a quadratic residue or a quadratic non-residue,
in $\ocs$, $\ots$ or $\oti$, respectively.
The remaining cases are treated similarly and easily.
We summarize our argument in the following table.
In each row the orbit counts for $W^{\times}$ are equal to the
multiplier times those for $Y$, and those for $Y$ are listed in the middle
columns.
\[
\begin{array}{r||c|ccc|l}
\hline
\text{Subset}&
Y &
\ocs&\ots&\oti
&\multicolumn{1}{|c}{\text{multiplier}}
\\
\hline
\sbs12^\times
&\pb001010
&1&&&\times q(q-1)^2
\\
\sbs13^\times
&\pb001100
&&1&&\times q^2(q-1)^2
\\
\sbs22^\times
&\pb001010
&1&&&\times (q^2-1)(q^2-q)
\\
\sbs23^\times
&\pb01010*
&1& \frac{q-1}2 & \frac{q-1}2&\times q^2(q-1)^2\\
\sbs33^\times
&\pb01010*
&1& \frac{q-1}2 & \frac{q-1}2 &\times q(q^2-1)(q^2-q)\\
\hline
\end{array}
\]

As a conclusion, we have the following counts:
\[
\scriptsize
\begin{array}{r||ccccccc}
\hline
\text{Subspace}
&\co_0&\odr&\ods&\odi&\ocs&\ots&\oti\\
\hline
\sbs12
&1&q^2-1&q(q-1)&&q(q-1)^2\\
\sbs13
&1&2q^2-q-1&\frac12q(q^2-1)&\frac12q(q-1)^2&q(q-1)^2&(q-1)^2q^2
\\
\sbs22
&1&q^2-1&q(q^2-1)&&(q^2-1)(q^2-q)&&\\
\sbs23
&1&2q^2-q-1&\frac12q(3q^2-2q-1)&\frac12q(q-1)^2&(q-1)^2(2q^2+q)&\frac{1}{2} q^2 (q - 1)^2(q + 1)&\frac12q^2(q-1)^3\\
\sbs33
&1&(q-1)(q+1)^2&\frac12q(q-1)(q+1)^2&\frac12q(q-1)^2(q+1)&q(q^2-1)^2&\frac12(q^3-q)^2&\frac12(q^2-q)^2(q^2-1)\\
\hline
\end{array}
\]

We now deduce $M$.
The vectors $(W\cap\co_i)_i\in\R^7$
for
\[
W=\sbs 00,\sbs11,\sbs02,\sbs03,\sbs13,\sbs22,\sbs33
\]
span $\R^7$. We have:
\begin{theorem}\label{thm:2sym22-A}
Suppose $\chr(\F_q)\neq2$.
The matrix $q^6M$ is given by
\[
\scriptsize
\left[ \begin {array}{ccccccc}1& ( q-1 )  ( q+1
 ) ^{2}&( q-1 )  ( q+1 ) ^{2}q/2&
 ( q-1 ) ^{2}q ( q+1 )/2 & ( q+1 ) ^{2}
q ( q-1 ) ^{2}&( q+1 ) ^{2}{q}^{2} ( 
q-1 ) ^{2}/2&( q-1 ) ^{3}{q}^{2} ( q+1
 )/2 \\ \noalign{\medskip}1&{q}^{2}-q-1&q ( 2\,q+1
 )  ( q-1 )/2 &-q ( q-1 )/2 & ( {q}^
{2}-q-1 ) q ( q-1 ) &-{q}^{2} ( q-1
 )  ( q+1 )/2 &-{q}^{2} ( q-1 ) ^{2}/2
\\ \noalign{\medskip}1& ( 2\,q+1 )  ( q-1 ) &
\,q ( {q}^{2}-2\,q-1 )/2 &q ( q-1 ) ^{2}/2&-
 ( q-1 )  ( q+1 ) q&{q}^{2} ( q-1
 ) ^{2}/2&-{q}^{2} ( q-1 ) ^{2}/2
\\ \noalign{\medskip}1&-q-1&( q-1 )  ( q+1
 ) q/2&( {q}^{2}+1 ) q/2&- ( q-1 ) 
 ( q+1 ) q&-{q}^{2} ( q-1 )  ( q+1
 )/2 &{q}^{2} ( q-1 )  ( q+1 )/2 
\\ \noalign{\medskip}1&{q}^{2}-q-1&-( q+1 ) q/2&-q
 ( q-1 )/2 &q&-{q}^{2} ( q-1 )/2 &{q}^{2}
 ( q-1 )/2 \\ \noalign{\medskip}1&-q-1&q ( q-1
 )/2 &-q ( q-1 )/2 &-q ( q-1 ) &{q}^{2}&0
\\ \noalign{\medskip}1&-q-1&-( q+1 ) q/2&( q
+1 ) q/2& ( q+1 ) q&0&-{q}^{2}\end {array} \right].
\]
\end{theorem}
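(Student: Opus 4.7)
The plan is to apply Proposition \ref{prop:A} to each of the seven subspaces
$W \in \{\sbs{0}{0}, \sbs{1}{1}, \sbs{0}{2}, \sbs{0}{3}, \sbs{1}{3}, \sbs{2}{2}, \sbs{3}{3}\}$
whose orbit-intersection counts have just been tabulated. The crucial structural observation is that this list is closed under the involution $\sbs{i}{j} \mapsto \sbs{3-j}{3-i} = \sbs{i}{j}^{\perp}$: the pairings are $(\sbs{0}{0}, \sbs{3}{3})$, $(\sbs{1}{1}, \sbs{2}{2})$, $(\sbs{0}{2}, \sbs{1}{3})$, while $\sbs{0}{3}$ is self-dual. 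Consequently, for each $W$ in the list, both of the vectors $(|\co_i \cap W|)_i$ and $(|\co_i \cap W^{\perp}|)_i$ appear among the data already computed, and no further counting is required.

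For each such $W$, Proposition \ref{prop:A} yields a linear identity
\[
\sum_{i=1}^{7}\frac{|\co_i \cap W|}{|\co_i|}\, \widehat{e_i} \;=\; \frac{|W|}{|V|} \sum_{i=1}^{7} \frac{|\co_i \cap W^{\perp}|}{|\co_i|}\, e_i.
\]
Substituting $\widehat{e_j} = \sum_i a_{ij} e_i$ and comparing coefficients of each $e_i$ converts the seven identities into a square linear system on the $49$ unknowns $a_{ij}$. In matrix form, let $C$ be the $7 \times 7$ matrix whose $W$-row is $(|\co_j \cap W|/|\co_j|)_j$ and $D$ be the $7 \times 7$ matrix whose $W$-row is $\tfrac{|W|}{|V|}\,(|\co_i \cap W^{\perp}|/|\co_i|)_i$. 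Then the system reads $C\, M^{T} = D$, so that $M = (C^{-1} D)^{T}$.

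The next step is to check that $C$ is invertible, equivalently that the seven count vectors span $\R^{7}$. The filtration $\sbs{0}{0} \subset \sbs{0}{2} \subset \sbs{0}{3} \subset \sbs{1}{3} \subset \sbs{3}{3}$, combined with $\sbs{1}{1}$ (which is the only tabulated subspace meeting $\odr$ without meeting any other nontrivial orbit) and $\sbs{2}{2}$, produces a count matrix which is generically of full rank in $q$. Once invertibility is established, the computation of $M$ reduces to a single inversion and matrix multiplication with polynomial entries in $q$.

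The main obstacle is not conceptual but computational: inverting a $7 \times 7$ matrix of polynomials in $q$ by hand is lengthy and error-prone, so I would delegate the linear algebra to PARI/GP, parallel to what was done in Sections \ref{sec:sym32}--\ref{sec:sym23}. As a final consistency check, I would confirm that the resulting $M$ satisfies both conclusions of Lemma \ref{lem:A}: symmetry of $\mathrm{diag}(|\co_i|)\, M$, and $M^{2} = q^{-6} E_{7}$, the latter being applicable because $(-I_{2}, I_{2}) \in G$ sends $x$ to $-x$, so every $G$-orbit is stable under negation.
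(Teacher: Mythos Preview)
Your proposal is correct and follows essentially the same approach as the paper: the paper selects precisely the same seven subspaces $\sbs{0}{0},\sbs{1}{1},\sbs{0}{2},\sbs{0}{3},\sbs{1}{3},\sbs{2}{2},\sbs{3}{3}$, observes (as you do) that this list is closed under the duality $\sbs{i}{j}\mapsto\sbs{3-j}{3-i}$, verifies that the associated count vectors span $\R^7$, and then carries out the linear algebra in PARI/GP, double-checking the result against Lemma~\ref{lem:A}. Your remark that $(-I_2,I_2)\in G$ negates every $x$ is exactly what justifies invoking part~(2) of that lemma.
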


\section{$2\otimes\Sym^2(3)$}\label{sec:pairs_ternary}

We come now to the prehomogeneous vector space
$(\GL_2 \times \GL_3, 2\otimes\Sym^2(3))$, 
which we call the `quartic case' because (roughly speaking) 
it parametrizes quartic fields \cite{WY} and rings \cite{HCL3}.
We assume that $\chr(\F_q)\neq 2$ in the argument,
in which case there are $20$ orbits over $\F_q$.

We write an element of $V=\F_q^2\otimes\sym^2(\F_q^3)$ as follows:
\[
x=(A,B)=
\begin{bmatrix}a&b&c&d&e&f\\ a'&b'&c'&d'&e'&f'\end{bmatrix},
\]
where $A,B$ are
the ternary quadratic forms
\[
A=au^2+buv+cuw+dv^2+evw+fw^2,
\quad
B=a'u^2+b'uv+c'uw+d'v^2+e'vw+f'w^2.
\]
We also regard $A$ and $B$ as the three-by-three symmetric matrices
representing the forms.
Let $G_1=\gl_2(\F_q)$, $G_2=\gl_3(\F_q)$ and $G:=G_1\times G_2$.
We consider the group action of $G$ given by
\[
\left(\twtw \alpha\beta\gamma\delta, g_2\right) \circ (A, B)
=(\alpha g_2 A g_2^T + \beta g_2 B g_2^T, \gamma g_2 A g_2^T + \delta g_2 B g_2^T).
\]

The {\em cubic resolvent} $r_x\in\sym^3(\F_q^2)$
and the discriminant $\Disc(x)$ of $x\in V$ are respectively defined by
\begin{align*}
r_x(u,v)&:=-4\det(Au+Bv)\in\sym^3(\F_q^2),\\
\Disc(x)&:=\disc(r_x),
\end{align*}
and as before we say $x$ is singluar if $\Disc(x) = 0$.
Then $r_{(g_1,g_2)\circ x}=\det g_1(\det g_2)^2(g_1\circ r_x)$,
and thus $\Disc(g\circ x)=(\det g_1)^6(\det g_2)^8\Disc(x)$.

Writing the symmetric bilinear form \eqref{eq:sym23-bilinear-form}
on $\sym^2(\F_q^3)$ by $[A,A']'$, we define
a symmetric bilinear form on $V$ by
\begin{equation}\label{eq:2sym23-bilinear-form}
[(A, B), (A', B')] = [A, A']' + [B, B']'.
\end{equation}
Then we have $[gx,g^{-T}x']=[x,x']$ and so
$(G,V)$ satisfies Assumption \ref{assmp:dual-identify}.

\subsection{Orbit description}

The aim of this subsection is to give an orbit description over $\F_q$.
Note first that the non-singular orbits are known by
Wright-Yukie \cite{WY},
where they showed that the set of non-singular orbits
corresponds bijectively to the set of isomorphism classes
of etale quartic algebras of $\F_q$. Hence there are five of them.
Moreover, they gave a natural geometric interpretation of this result.
An $x=(A,B)$ determines two conics in $\P^2(\overline \F_q)$,
and $x$ is non-singular if and only if they are of complete intersection.
Thus to a non-singular $x$,
we attach one of the symbols
$(1111)$, $(112)$, $(22)$, $(13)$ or $(4)$,
identifying the degrees of the residue fields at the points of intersections.
It is clear that elements in a non-singular orbit
posses the same symbol, and they actually showed that
elements having the same symbol lie in the same orbit.
This gives a satisfactory description of the non-singular orbits.
We denote these orbits respectively by
$\co_{1111}$,
$\co_{112}$,
$\co_{22}$,
$\co_{13}$
and
$\co_{4}$.

Bhargava \cite[Lemma 21]{HCL3} described
many of the singular orbits and computed their cardinalities,
and we will build upon his work as well.

To classify singular orbits, it is useful
to think of certain ``higher singular'' conditions of
$x=(A,B)$,
described as follows:
\begin{enumerate}
\item[(D)]
$A$ and $B$ are linearly dependent;
\item[(C)]
$A$ and $B$ share a common linear factor;
\item[(B)]
After a change of variables, $A$ and $B$ can be written as
a pair of binary quadratic forms (each in the same two variables).
\end{enumerate}
In other words, the $G$-orbit of $x$ contains an element
in the subspace
\[
W_{\rm (D)}=
\begin{bmatrix}
	0&0&0&0&0&0\\
	*&*&*&*&*&*\\
\end{bmatrix},
\quad
W_{\rm (C)}=
\begin{bmatrix}
	0&0&*&0&*&*\\
	0&0&*&0&*&*\\
\end{bmatrix},
\quad
W_{\rm (B)}=
\begin{bmatrix}
	0&0&0&*&*&*\\
	0&0&0&*&*&*\\
\end{bmatrix},
\]
respectively.
Geometrically,
$x$ is of type (D) if $A=0$ and $B=0$ are identical conics
so they are doubled;
$x$ is of type (C) if $A=0$ and $B=0$ are both reducible conics
sharing a common line;
$x$ is of type (B) if $A=0$ and $B=0$ are both reducible as conics over $\overline \F_q$ and pass through a single common point.
(These conditions are not mutually exclusive, and should be interpreted
a bit loosely; for example the doubled zero conic belongs to all three subspaces.)

We will prove that if $x$ is singular
but not of type (D), (C) or (B),
then two conics $A=0$ and $B=0$
intersect with each other in exactly four points
counting multiplicities, and that the state
of intersection is a complete invariant for those orbits.
There are six types, and we attach symbols
$(1^4)$, $(1^31)$, $(1^21^2)$,
$(2^2)$, $(1^211)$ or $(1^22)$.
Hence we denote these respective six orbits by
$\co_{1^4}$,
$\co_{1^31}$,
$\co_{1^21^2}$,
$\co_{2^2}$,
$\co_{1^211}$
and
$\co_{1^22}$,
after we have the assertion.

We prove:
\begin{lemma}\label{lem:singular-orbit}
Singular elements are either of type {\rm (D)}, {\rm (C)} or {\rm (B)},
or $G$-equivalent to one of the six elements below.
Here, $l\in\F_q^\times$ is a non-square element.
\[
(w^2,uw+v^2),
(vw,uw+v^2),
(w^2,u^2-v^2),
(w^2,u^2-lv^2),
(v^2-w^2,uw),
(v^2-lw^2,uw).
\]
The symbols of these six elements are
$(1^4)$, $(1^31)$, $(1^21^2)$,
$(2^2)$, $(1^211)$ or $(1^22)$,
respectively.
\end{lemma}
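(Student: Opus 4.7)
The strategy is to exploit the cubic resolvent $r_x \in \sym^3(\F_q^2)$: since $\Disc(x) = \Disc(r_x)$ by definition, singularity of $x$ forces $r_x$ either to vanish or to lie in one of the singular orbits $\calO_{(1^3)}$ or $\calO_{(1^21)}$ classified in Section~\ref{sec:sym32}. Recall $r_x(u,v) = -4\det(Au+Bv)$, so each root $[u_0:v_0]\in\P^1$ of $r_x$ labels a singular member $u_0 A + v_0 B$ of the pencil $\{sA + tB\}$, and the multiplicity of the root reflects the rank-drop of that conic. In particular, the coefficients of $u^3$ and $v^3$ in $r_x$ are $-4\det A$ and $-4\det B$.

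I would first dispose of the case $r_x=0$. Here every member of the pencil $\{sA+tB\}$ is a singular conic in $\P^2$, and a short geometric argument over $\bar\F_q$ shows that this configuration must fall into one of the excluded types: either $A$ and $B$ are proportional (type (D)), the pencil shares a common linear component (type (C)), or it consists of cones with a common vertex (type (B), after moving the vertex to $[1{:}0{:}0]$ by some $g_2\in\GL_3(\F_q)$). A descent step is needed to ensure the relevant component or vertex is $\F_q$-rational rather than defined only over $\F_{q^2}$, but this follows from the uniqueness of the configuration attached to an $\F_q$-rational pencil.

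Assuming $r_x\neq 0$, I use the $G_1$-action (which acts on $r_x$ by the twisted $\GL_2$-action of Section~\ref{sec:sym32} up to a scalar) to normalize $r_x$ to a standard orbit representative. If $r_x\in\calO_{(1^3)}$, normalize to $r_x=v^3$; then $A$ is the unique singular conic in the pencil and $B$ is non-singular. Splitting on $\rank(A)\in\{1,2\}$ and using $G_2$ together with the residual stabilizer of $r_x$ in $G_1$ to normalize $A$ to $w^2$ or $vw$ respectively, the constraints $r_x=v^3$ and the negations of (B), (C) determine $B$ (modulo the stabilizer of $A$) to be $uw+v^2$ in each case, yielding representatives $(w^2,uw+v^2)$ and $(vw,uw+v^2)$. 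If $r_x\in\calO_{(1^21)}$, normalize $r_x=c\,uv^2$; then $A$ (at the double root) and $B$ (at the simple root) are both singular, with $B$ of rank $2$. Splitting now on $\rank(A)\in\{1,2\}$ and on whether the appropriate rank-$2$ conic in the pencil splits over $\F_q$, one normalizes to one of the four remaining representatives, with the non-square $l$ entering precisely when the relevant rank-$2$ form is anisotropic; Proposition~\ref{prop:orbit_ternary} controls the classification of the rank-$2$ normalization.

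The main obstacle is the last group of cases, $r_x\in\calO_{(1^21)}$: one must carefully track which stabilizer one is working with at each stage, verify that the stabilizer of the normalized $A$ acts transitively on the allowed $B$'s within each of the split/anisotropic classes, and check that the two classes are not further identified by the remaining $G$-action. The claimed symbols are then verified by directly computing the scheme-theoretic intersection $\{A=0\}\cap\{B=0\}$ in $\P^2(\bar\F_q)$ for each representative; the Galois action on these intersection points, which fixes them individually or pairs them into conjugate pairs, is exactly what accounts for the distinction between $(1^21^2)$ and $(2^2)$ and between $(1^211)$ and $(1^22)$.
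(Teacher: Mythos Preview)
Your plan is sound and would yield a correct proof, but it is organized differently from the paper's. You case-split first on the orbit type of $r_x$ (zero, $(1^3)$, or $(1^21)$), which forces you to dispose of $r_x=0$ via a separate geometric lemma on pencils of singular conics plus a descent step. The paper instead makes a single normalization---using only that $r_x$ has a repeated root, so that its $u^3$ and $u^2v$ coefficients may be taken to vanish---and then cases directly on $\rank(A)\in\{0,1,2\}$; the degenerate types (D), (C), (B) then emerge automatically as sub-cases of the coordinate analysis (e.g.\ $B(u,v,0)=0$ gives type (C) when $A=w^2$), so no separate treatment of $r_x=0$ is needed. Your organization is more transparent in that the resolvent type already predicts the eventual intersection symbol, but it costs you that extra lemma. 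One small gap to patch: in your $(1^3)$ case you split only on $\rank(A)\in\{1,2\}$, but $\rank(A)=0$ also occurs there---take $A=0$ with $B$ non-singular, so that $r_x=-4v^3\det B\neq 0$---and this is the type-(D) orbit $\odg$, which your $r_x=0$ case does not catch.
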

\begin{proof}
Suppose $x=(A,B)$ is singular.
Then since $r_x$ is a singular binary cubic form,
we may assume that the coefficients of $u^3$ and $u^2v$ of
$r_x$ are both zero.
Thus in particular $\det(A)=0$ 
and hence $\rank(A)\leq2$.
\begin{enumerate}
\item
If $\rank(A)=0$, then $x$ is of type (D).
\item
Let $\rank(A)=1$. Then by $\gl_3$, we may assume that $A=w^2$. 
Let $B=au^2+buv+cuw+dv^2+evw+fw^2$.
We look at $B(u,v,0)=au^2+buv+dv^2\in\sym^2(\F_q^2)$.
\begin{enumerate}
\item
If $au^2+buv+dv^2$ is a zero form,
then $x$ is of type (C).
\item
Suppose $au^2+buv+dv^2$ is non-zero but singular.
By a linear change of $u$ and $v$,
we may assume $a=b=0$ and $d=1$, and thus
$(A,B)=(w^2,v^2+w(cu+ev+fw))$.
\begin{enumerate}
\item
If $c=0$, then $x$ is of type (B).
\item
If $c\neq0$, we may replace $cu+ev+fw$ with $u$ via $\gl_3$,
and thus $(A,B)=(w^2,v^2+uw)$.
\end{enumerate}
\item
Suppose $au^2+buv+dv^2$ is non-singular.
By a linear change of $u$ and $v$, we may assume $b=0$,
and hence $ad\neq0$.
Then
\[
B=au^2+cuw+dv^2+evw+fw^2=a(u+\tfrac{c}{2a}w)^2+d(v+\tfrac{e}{2d}w)^2+*w^2.
\]
Replacing $u+\tfrac{c}{2a}w$ with $u$ and $v+\tfrac{e}{2d}w$ with $v$
via $\gl_3$, and further eliminating the $w^2$-term using $A=w^2$,
we have $x=(w^2,au^2+dv^2)$. Since $ad\neq0$, this is equivalent to
one of the middle two in the list.
\end{enumerate}

\item
Let $\rank(A)=2$.
Using the $\gl_3$ action,
Proposition \ref{prop:orbit_ternary} allows us to assume that
$A=av^2+bvw+cw^2$.
Since the $u^2v$ term of $r_x$ vanishes,
$B$ is of the form
$B=duv+euw+fv^2+gvw+hw^2$. 
\begin{enumerate}
\item
If $d=e=0$, then $x$ is of type (B).
\item
If $(d,e)\neq(0,0)$, then by a linear change of $v$ and $w$,
we may assume that $(d,e)=(0,1)$.
\begin{enumerate}
\item
Suppose $a=0$. Then $A=w(bv+cw)$.
Since $\rank(A)=2$, we have $b\neq0$ and hence
may replace $bv+cw$ with $v$ via $\gl_3$.
So $A=vw$. 
On the other hand, since $B=fv^2+w(u+gv+hw)$,
we may replace $u+gv+hw$ with $u$ and thus $B=fv^2+uw$.
\begin{enumerate}
\item
If $f=0$, then $x=(vw,uw)$ is of type (C).
\item
If $f\neq0$, then replace $u$ with $fu$ and we have
$B=f(v^2+uw)\sim v^2+uw$.
\end{enumerate}
\item
Suppose $a\neq0$.
We may assume $a=1$. We can eliminate $b$ and $f$
and thus may assume $(A,B)=(v^2+cw^2,w(u+gv+hw))$.
This lies in the orbit of $(v^2+cw^2,uw)$
since we can replace $u+gv+hw$ with $u$ via $\gl_3$.
This is equivalent to the one of the last two in the list.
\end{enumerate}
\end{enumerate}
\end{enumerate}
This finishes the proof.
\end{proof}

We now give our orbit description, extending \cite[Lemma 21]{HCL3}.
To describe the orbit sizes, we write
\[
s(a,b,c,d):=(q-1)^aq^b(q+1)^c(q^2+q+1)^{d/2},
\]
where $d$ is always even.
Note that its degree as a polynomial in $q$ is $a+b+c+d$.

\begin{proposition}\label{prop:orbit-2sym23}
Assume $\chr(\F_q)\neq 2$.
The action of $G$ on $V$ has twenty orbits,
of which fifteen are singular.
For each orbit, 
the table below lists an orbital representative,
the type of the resolvent,
and the number of rational common zeros in $\P^2(\F_q)$,
of any element in the orbit,
and the size of the orbit:
\[
\begin{array}{llccl}
\text{\rm Orbit}
&\text{\rm Representative}
&\text{\rm Resolvent}
&\text{\rm Common zeros}
&\text{\rm Orbit size}
\\
\hline
\co_0
&(0,0)
&(0)
&q^2+q+1
		&1
\\
\odr
&(0,w^2)
&(0)
&q+1
		&s(1,0,1,2)
\\
\ods
&(0,vw)
&(0)
&2q+1
		&s(1,1,2,2)/2
\\
\odi
&(0,v^2-lw^2)
&(0)
&1
		&s(2,1,1,2)/2
\\
\odg&(0,u^2-vw)
&(1^3)
&q+1
		&s(2,2,1,2)
\\
\ocs&
(w^2,vw)
&(0)
&q+1
		&s(2,1,2,2)
\\
\ocg&
(vw,uw)
&(0)
&q+2
		&s(2,3,1,2)
\\
\ots
&(w^2,v^2)
&(0)
&1
		&s(2,2,2,2)/2
\\
\oti
&(vw,v^2+lw^2)
&(0)
&1
		&s(3,2,1,2)/2
\\
\co_{1^4}
&(w^2,uw+v^2)
&(1^3)
&1
		&s(3,2,2,2)
\\
\co_{1^31}
&(vw,uw+v^2)
&(1^3)
&2
		&s(3,3,2,2)
\\
\co_{1^21^2}
&(w^2,u^2-v^2)
&(1^21)
&2
		&s(2,4,2,2)/2
\\
\co_{2^2}&
(w^2,u^2-lv^2)
&(1^21)
&0
		&s(3,4,1,2)/2
\\
\co_{1^211}
&(v^2-w^2,uw)
&(1^21)
&3
		&s(3,4,2,2)/2
\\
\co_{1^22}
&(v^2-lw^2,uw)
&(1^21)
&1
		&s(3,4,2,2)/2
\\
\co_{1111}
&(uw-vw,uv-vw)
&(111)
&4
		&s(4,4,2,2)/24
\\
\co_{112}
&(vw,u^2-v^2-lw^2)
&(12)
&2
		&s(4,4,2,2)/4
\\
\co_{22}
&(vw,u^2-lv^2-lw^2)
&(111)
&0
		&s(4,4,2,2)/8
\\
\co_{13}&
(uw-v^2,B_3)
&(3)
&1
		&s(4,4,2,2)/3
\\
\co_4
&(uw-v^2,B_4)
&(12)
&0
		&s(4,4,2,2)/4
\\
\end{array}
\]
Here $B_3$ and $B_4$ in the last two rows are
$B_3=uv+a_3v^2+b_3vw+c_3w^2$ and
$B_4=u^2+a_4uv+b_4v^2+c_4vw+d_4w^2$,
where
$X^3+a_3X^2+b_3X+c_3$ and $X^4+a_4X^3+b_4X^2+c_4X+d_4$
are respectively irreducible cubic and quartic polynomials over $\F_q$
(and as before $l\in\F_q^\times$ is a non-square element).
\end{proposition}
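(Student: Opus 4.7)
I would prove this by enumerating orbits, verifying that the listed representatives are inequivalent, and computing the tabulated invariants. For the five nonsingular orbits I cite Wright--Yukie \cite{WY}: the nonsingular $G(\F_q)$-orbits correspond bijectively to isomorphism classes of \'etale quartic $\F_q$-algebras, namely $\F_q^4$, $\F_q^2 \times \F_{q^2}$, $\F_{q^2}^2$, $\F_q \times \F_{q^3}$, and $\F_{q^4}$, giving $\co_{1111}, \co_{112}, \co_{22}, \co_{13}, \co_4$. For the singular orbits I use Lemma \ref{lem:singular-orbit}: every singular $x$ is either of type (D), (C), or (B), or $G$-equivalent to one of six listed representatives; I treat these cases separately.

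For type (D), any $x = (0, B)$ has its $G$-orbit determined by the $(\GL_1 \times \GL_3)(\F_q)$-orbit of $B$ on $\Sym^2(\F_q^3)$, so by Proposition \ref{prop:orbit_ternary} we obtain exactly five orbits $\co_0, \odr, \ods, \odi, \odg$. For type (C) not (D), after a $\GL_3$ translation we may assume $x \in W_{\rm (C)}$ with $A \neq 0$, so $w \mid A$ and $w \mid B$; writing $A = w L_A$, $B = w L_B$ and analyzing the pair of linear forms $(L_A, L_B)$ under the residual action produces two orbits $\ocs$ and $\ocg$, distinguished for instance by whether some nonzero element of the pencil $\alpha L_A + \beta L_B$ is proportional to $w$. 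For type (B) not (C) not (D), we may assume $x \in W_{\rm (B)}$; the stabilizer of $W_{\rm (B)}$ in $\GL_3$ acts on it through a $\GL_2$ quotient, so the analysis reduces to $2 \otimes \Sym^2(2)$ of Section \ref{sec:2sym22}, and the new orbits not already captured by types (D) or (C) are exactly $\ots$ and $\oti$ by Proposition \ref{prop:orbit-2sym22}.

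The remaining six singular orbits arise from Lemma \ref{lem:singular-orbit}. To confirm they are pairwise distinct I compare the cubic resolvent type and the number of common zeros listed in the table. For example, $\co_{1^4}$ and $\co_{1^31}$ both have resolvent $(1^3)$ but are separated by the common zero count ($1$ versus $2$); the four orbits $\co_{1^21^2}, \co_{2^2}, \co_{1^211}, \co_{1^22}$ sharing resolvent $(1^21)$ are distinguished by their respective common zero counts $2, 0, 3, 1$. The resolvent and common-zero entries in the table are then each verified by a direct computation for the given representative: expand $r_x = -4 \det(Au + Bv)$ and factor it to identify its orbit label from Section \ref{sec:sym32}, and count $\F_q$-points of the intersection of the conics $\{A = 0\}$ and $\{B = 0\}$ in $\P^2$.

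Finally, for the orbit sizes I use $|\text{orbit}| = |G|/|\Stab|$, noting that $|G| = |\GL_2(\F_q)| \cdot |\GL_3(\F_q)| = s(4, 4, 2, 2)$. For the type (D), (C), (B) orbits, the stabilizer computations descend to those in the smaller prehomogeneous spaces already handled. For the six Lemma orbits, the stabilizers are determined by direct matrix analysis (e.g., for $\co_{1^4}$ one identifies the elements preserving the unique common zero together with the tangency of the two conics there; for $\co_{1^21^2}$ one preserves the two distinct singular points of $A$ together with the line of $B$). For the nonsingular orbits, the stabilizer in $G$ can be identified with $\Aut_{\F_q}$ of the associated \'etale quartic algebra, yielding orders $24, 4, 8, 3, 4$ and matching the claimed orbit sizes. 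The main obstacle is tracking these stabilizers consistently across the $\GL_2$ and $\GL_3$ factors, particularly for $\co_{1111}$ where the stabilizer realizes $S_4$ permuting the four common zeros; as a consistency check, the twenty orbit sizes should sum to $|V| = q^{12}$.
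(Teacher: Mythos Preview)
Your overall strategy matches the paper's: cite Wright--Yukie for the nonsingular orbits, use the (D)/(C)/(B) filtration together with Lemma~\ref{lem:singular-orbit} for the singular ones, and distinguish orbits by rank, resolvent type, and number of common zeros. The main difference is that for the last eleven orbit sizes the paper simply cites Bhargava \cite[Lemma~21]{HCL3}, whereas you sketch direct stabilizer computations; your identification of the nonsingular stabilizers with $\Aut_{\F_q}$ of the quartic algebra is correct and gives the same numbers.

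There is, however, a genuine gap in your treatment of type~(B). You write that ``the stabilizer of $W_{\rm (B)}$ in $\GL_3$ acts on it through a $\GL_2$ quotient, so the analysis reduces to $2\otimes\Sym^2(2)$,'' and conclude that $\ots$ and $\oti$ are distinct. But knowing the action of the \emph{setwise} stabilizer of $W_{\rm (B)}$ only tells you that each $G$-orbit meeting $W_{\rm (B)}$ is a union of $\GL_2\times\GL_2$-orbits there; it does not preclude two such smaller orbits from being fused by some $g\in G$ that moves $W_{\rm (B)}$ off itself and back. Concretely, $(w^2,v^2)$ and $(vw,v^2+lw^2)$ share \emph{all} of your tabulated invariants: rank~$2$, resolvent type~$(0)$, and exactly one rational common zero. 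So nothing you have written separates them. The paper handles exactly this point by showing that if $y=\pt 000010000def\in W_{\rm (B)}$ and $g\circ y\in W_{\rm (B)}$, then the $\GL_3$-component of $g$ must have the block-triangular form $\left[\begin{smallmatrix}*&0&0\\ *&*&*\\ *&*&*\end{smallmatrix}\right]$, whence $G$-equivalence within this slice really does coincide with $\GL_2\times\GL_2$-equivalence. The same argument is what justifies your claim that the stabilizer computations for the (B)-type orbits ``descend'' to $2\otimes\Sym^2(2)$; without it, you cannot rule out extra elements in $\Stab_G$ coming from outside the block-triangular subgroup, and hence cannot certify the orbit sizes for $\ocs,\ots,\oti$.
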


\begin{proof}
If $x$ is of type (D), then by the orbit description of $\sym^2(3)$,
$x$ is equivalent to one of the first five elements in the table.
If $x$ is of type (B), then by the orbit description of
$2\otimes\sym^2(2)$,
$x$ is equivalent to either one of the first four elements in the table,
or to $(w^2,vw)$, $(w^2,v^2)$ or $(vw,v^2+lw^2)$.
If $x$ is of type (C), then
$x$ is equivalent to either
$(0,0)$, $(0,w^2)$, $(0,vw)$, $(w^2,vw)$ or $(vw,uw)$.
Hence by Lemma \ref{lem:singular-orbit}
and the result of \cite{WY} mentioned above,
any element in $V$ is equivalent to one of the twenty elements
in the table.

We confirm that their orbits are all different.
This is immediate,
except for the possibility that $(w^2,v^2)$
and $(vw,v^2+lw^2)$ may lie the same orbit,
by comparing
the three invariants
rank, resolvent, and the number of common zeros in $\mathbb P^2$.
We show that
$(w^2,v^2)\sim(vw,v^2-w^2)$
and $(vw,u^2+lw^2)$ are not in the same orbit.

We embed $W=2\otimes\sym^2(2)$ into $V=2\otimes\sym^2(3)$
via $\pb abcdef\mapsto \pt 000abc000def$, and regard it as a subspace.
Let $y=\pb 010def=\pt 000010000def$ and suppose
$g\circ y\in W$ for a $g=(g_1,g_2)\in G$.
Since $W$ is invariant under $G_1$,
we have $(1,g_2)\circ y\in W$ as well.
Let
$g_2=
\left[\begin{smallmatrix}
	h&\alpha&\beta\\
	i&j&k\\
	l&m&n
\end{smallmatrix}\right]$.
We claim that $\alpha=\beta=0$.
The first row of $y$ is the ternary quadratic form $vw$,
and it is transformed by $g_2$ to $(\alpha u+jv+mw)(\beta u+kv+nw)$.
Since this form involves only the variables $v$ and $w$,
we have $\alpha\beta=\alpha k+\beta j=\alpha n+\beta m=0$.
If $\alpha\neq0$, then $\beta=k=n=0$ and so $g_2$ is not invertible.
Hence $\alpha=0$. Similarly, we have $\beta=0$.

This shows that elements of the form $\pb 010def$
are $G$-equivalent in $V$ if and only if they are $\gl_2\times\gl_2$-equivalent
in $W$, and thus the difference of the orbits is asserted.

We count the orbit sizes.
Note that the argument above also gives
an expression of the stabilizers of
$\pt 000010000def$ in $G$ in terms of 
the stabilizers of $\pb 010def$ in $\gl_2\times\gl_2$.
Namely, if $g=(g_1,g_2)\in G$ stabilizes $y=\pt 000010000def$,
then $g_2$ must be of the form 
$\left[\begin{smallmatrix}
	h&0&0\\
	i&j&k\\
	l&m&n
\end{smallmatrix}\right]$,
and for such a $g=(g_1,g_2)$, we immediately find that
$g$ stabilizes $y$ if and only if
$(g_1,\twtw jkmn)\in\gl_2\times\gl_2$ stabilizes $\pb 010def$.
Therefore, the orbit sizes of $\ocs$, $\ots$ and $\oti$ in $V$
are multiplied by $q^2+q+1$ to those of in $W$.
Each of the orbits $\odr$, $\ods$, $\odi$ and $\odg$
consisting of doubled conics is in bijection with a pair
$(\co, \gamma)$ where $\co$ is a $\gl_1\times\gl_3$ orbit in
$\sym^2(3)$ and $\gamma \in \mathbb P^1$,
so that the orbit sizes are deduced from those
for $\sym^2(3)$.
To compute the orbit size of $\ocg$, we determine the
stabilizers of $x=(vw,uw)$.
Suppose $g=(g_2,g_3)$ stabilizes $x$.
Let $g_3$ translates $u,v,w$ to $l_1,l_2,l_3$
respectively; these are independent ternary linear forms.
Also let $g_2^{-1}=\twtw \alpha\beta\gamma\delta$.
Then $gx=x$ means
$(l_2l_3,l_1l_3)=((\alpha v+\beta u)w,(\gamma v+\delta u)w)$.
Thus $l_3$ must coincide with $w$ up to scaling,
and therefore $l_1,l_2$ are linear forms in $u$ and $v$.
Now it is easy to see that
$\Stab(x)=\left\{\left(\twtw acbd^{-1},
\left[\begin{smallmatrix}a&b&\\c&d&\\&&e\end{smallmatrix}\right]
\right)\right\}\cong\gl_1\times\gl_2$;
we conclude $|\ocg|=|G|/|\Stab(x)|=|\gl_3|/|\gl_1|$.

The orbit sizes
of the latter eleven orbits are determined in \cite[Lemma 21]{HCL3}.

Finally, we compute the resolvents. Except for the last one,
this follows by rather easy case by case computation.
For $x=(vw-u^2,B_4)$, we find that
\[
r_x(u, 1) = u^3 - b_4 u^2 + (a_4 c_4 - 4d_4) u - (a_4^2 d_4 - 4b_4 d_4 + c_4^2)
\]
is the cubic resolvent of the polynomial
$Q(u) = u^4 - a_4u^3 + b_4u^2 - c_4 u + d_4$ and thus $\disc(r_x) = \disc(Q)$.
Since
$\disc(r_x) = \disc(Q)\equiv \disc(\F_{q^4}/\F_q)\pmod{(\F_q^\times)^2}$
is not a square in $\F_q$,
$r_x\in\sym^3(\F_q^2)$ is of type $(12)$.
This finishes the proof.
\end{proof}

\subsection{Counting elements in subspaces}
We now demonstrate our counts.
For $i,j\in\{0,1,2,3,4,5,6\}$, let $\sbs ij$
be the subspace consisting of pairs of forms $(A, B)$ such that
the last $i$ entries of $A$ and the last $j$ entries of $B$
are arbitrary, and other entries are $0$.
We largely follow the method in the previous section.
We define $\sbs ij^\times$ for $i<j$ and $i=j$
in the same way;
then \eqref{eq:wij} and \eqref{eq:wii}
remains true, and we argue inductively.
Note also that the counts of
$\sbs ij$ for $i,j\leq 3$
are obtained in the previous section,
while the counts of $\sbs 0j$ for $4\leq j\leq6$
are obtained in Section \ref{sec:sym23}.
For $X\subset V$, we use the same symbol $X$
to denote $(X\cap\co_i)_i\in\R^{20}$.

\medskip
\noindent
{\bf (I)}
We first count $\sbs 14, \sbs 15, \sbs 16, \sbs 24, \sbs 25, \sbs26$
and $\sbs 44$ by the same method as in the previous section.
The following diagram illustrates our induction process:
\[
\xymatrix@R=5mm@C=7mm{
 \sbs 23\ar[r]
&\sbs 24\ar[r]
&\sbs 25\ar[r]
&\sbs 26
\\
 \sbs 13\ar[ur]\ar[r]
&\sbs 14\ar[ur]\ar[r]\ar[u]
&\sbs 15\ar[ur]\ar[r]\ar[u]
&\sbs 16\ar[u]
\\
 \sbs 03\ar[ur]
&\sbs 04\ar[ur]\ar[u]
&\sbs 05\ar[ur]\ar[u]
&\sbs 06\ar[u]
}
\qquad
\xymatrix@R=5mm@C=7mm{
\\
&\sbs 44
\\
\sbs 22\ar[ur]
&\sbs 24\ar[u]
}
\]
We have previously counted
$\sbs 03, \sbs13, \sbs23, \sbs04, \sbs05,\sbs06, \sbs22$,
and now analyze $\sbs ij^\times$ for the seven subspaces mentioned above.
The following two tables give the summary of our counts:
\begin{align*}
&
\begin{array}{r||c|cccccccc|c}
\hline
\text{Subset}
&Y
&\ocs&\ocg&\ots&\co_{1^4}&\co_{1^31}&\co_{1^21^2}&\co_{2^2}&\co_{1^211}
&\text{multiplier}
\\
\hline
\sbs14^\times
&\pt000001001*00
&1&&&q-1&&&&&\times q^2(q-1)^2
\\
\sbs15^\times
&\pt000001010000
&&&&&&1&&&\times q^4(q-1)^2
\\
\sbs16^\times
&\pt000001100**0
&&&1&q-1&&\frac{q(q-1)}2&\frac{q(q-1)}2&&\times q^3(q-1)^2
\\
\sbs24^\times
&\pt000010001*00
&&1&&&q-1&&&&\times q^3(q-1)^2
\\
\sbs25^\times
&\pt00001001*00*
&&1&&&q-1&&&q(q-1)&\times q^3(q-1)^2
\\
\hline
\end{array}
\\
&
\begin{array}{r||c|cccccc|c}
\hline
\text{Subset}
&Y
&\co_{1^21^2}&\co_{1^211}&\co_{1^22}&\co_{1111}&\co_{112}&\co_{22}
&\text{multiplier}
\\
\hline
\sbs26^\times
&\pt000010100*0*
&1& q-1 & q-1 & \frac{(q-1)^2}4 & \frac{(q-1)^2}2 & \frac{(q-1)^2}4 & \times q^4(q-1)^2
\\
\sbs44^\times
&\pt00010*001000
&1& \frac{q-1}2 & \frac{q-1}2 &&&& \times q^3(q^2-1)(q^2-q)
\\
\hline
\end{array}
\end{align*}
As with $2 \otimes \Sym^2(2)$,
the counts for each $W^{\times}$ are equal to the relevant 
multiplier times those for the associated $Y$. These reductions are obtained
in a very similar manner to those for $2 \otimes \Sym^2(2)$ and so we limit ourselves to an outline of the necessary steps in the 
more difficult cases.
\begin{itemize}
\item
$\sby 14\ni\pt 000001001a00=(w^2,uw+av^2)$
is in $\ocs$ if $a=0$; otherwise,
$(w^2,uw+av^2) \sim(w^2,uw+v^2)\in\co_{1^4}$, as was listed in our table
of orbital representatives.
\item
$\sby 15\ni\pt 000001010000=(w^2,uv)$
is in $\co_{1^21^2}$. (The reduction, slightly more complicated
than previous ones, is most easily verified in the order
$
\pt 00000101**** = q\pt 000001010***=q^3 \pt 00000101000* = q^4 \pt 000001010000.
$) 

\item
$\sby 16\ni\pt 000001100{-a}b0=(w^2,u^2-av^2+bvw)$
is in $\ots$ if $a=b=0$
and is in $\co_{1^4}$ if $a=0$ but $b\neq0$.
If $a\neq0$, a routine intersection multiplicity computation establishes that
it is either in $\co_{1^21^2}$ or in $\co_{2^2}$
according as $a$ is a quadratic residue or non-residue.
\item
$\sby 24\ni\pt 000010001a00=(vw,uw+av^2)$
is in $\ocg$ if $a=0$ and
is in $\co_{1^31}$ elsewhere.
(The reduction may be verified in the order
$
\pt 00001*001*** = q\pt 000010001***=q^3 \pt 000010001*00
$.)
\item
$\sby 25\ni\pt 00001001a00b=(vw,uv+w(au+bw))$
is in $\ocg$ if $a=b=0$
and is in $\co_{1^31}$ if $a=0$ but $b\neq0$.
If $a\neq0$, we see that it has three rational zeros
$(1:0:0)$, $(0:1:0)$ and $(b:0:-a)$,
and thus is in $\co_{1^211}$.
($(1:0:0)$ is the double zero.)
\item
$\sby 26\ni\pt 000010100{-a}0{-b}=(vw,u^2-av^2-bw^2)$
has four common zeros $(\pm\sqrt{a}:1:0),(\pm\sqrt{b}:0:1)$.
The multiplicity and rationality are determined by
whether $a$ and $b$ are respectively zero,
a quadratic residue, or a quadratic non-residue,
and we have the counts.
(The reduction may be verified in the order
$
\pt 00001*1***** = 
q \pt 0000101***** = 
q^3 \pt 000010100*** = 
q^4 \pt 000010100*0*$.)
\item
$\sby 44\ni \pt00010{-a}001000=(v^2-aw^2,uw)$
is in $\co_{1^21^2}$, $\co_{1^211}$ or $\co_{1^22}$ according as
$a$ is zero, a quadratic residue, or a quadratic non-residue.
\end{itemize}

\medskip
\noindent
{\bf (II)}
Secondly, again by the same method, we count for
the following five subspaces
\[
\begin{array}{l}
W_1:=\pt00000000*0**,\\
W_2:=\pt00000*00*0**,\\
W_3:=\pt00*0**00*0**
\end{array}
\qquad
\text{and}
\qquad
\begin{array}{l}
W_4:=\pt000000000*0*,\\
W_5:=\pt000*0*000*0*
\end{array}
\]
with the following steps:
\[
\xymatrix@R=5mm@C=7mm{
\sbs12\ar[r]
&W_2
\\
\sbs 02\ar[ur]
&W_1\ar[u]
}
\qquad
\xymatrix@R=5mm@C=7mm{
&W_3
\\
\sbs 11\ar[ur]
&W_2\ar[u]
}
\qquad
\xymatrix@R=5mm@C=7mm{
&W_5
\\
\sbs 00\ar[ur]
&W_4\ar[u]
}
\]
Our result is:
\begin{align*}
&\begin{array}{r||ccccc}
\hline
\text{Subspace}
&\co_0&\odr&\ods&\ocs&\ocg\\
\hline
W_1=\pt00000000*0**&1&q-1&q(q^2-1)\\
W_2=\pt00000*00*0**&1&q^2-1&q(q^2-1)&(q^2-1)(q^2-q)\\
W_3=\pt00*0**00*0**&1&q^2-1&q(q^2-1)(q+1)&q(q^2-1)^2&q^2(q^2-1)(q^2-q)\\
\hline
\end{array}\\
&\begin{array}{r||ccccc}
\hline
\text{Subspace}
&\co_0&\odr&\ods&\odi&\ots\\
\hline
W_4=\pt000000000*0*&1&2(q-1)&\frac12(q-1)^2&\frac12(q-1)^2\\
W_5=\pt000*0*000*0*&1&2(q^2-1)&\frac12(q-1)(q^2-1)&\frac12(q-1)(q^2-1)&(q^2-1)(q^2-q)\\
\hline
\end{array}
\end{align*}

The counts for $W_1$ and $W_4$ are immediate.
To work as before,
we define $W_2^\times$, $W_3^\times$ and $W_5^\times$ in the same way:
\[
W_2^\times:=\left\{\pt00000a00b0cd\ \vrule\ a,b\neq0\right\},
\]
and
\[
W_3^\times:=\left\{\pt00a0bc00d0ef\ \vrule\ ae-bd\neq0\right\},
\quad
W_5^\times:=\left\{\pt000a0b000c0d\ \vrule\ ad-bc\neq0\right\}.
\]
Then in this case we immediately see that
$W_2^\times\subset\ocs$,
$W_3^\times\subset\ocg$ and
$W_5^\times\subset\ots$, and thus we have the table.

\medskip
\noindent
{\bf (III)}
Thirdly, we count for
\[
\sbs 55=\pt0*****0*****
\qquad
\text{and}
\qquad
W_6:=\pt0*0***0*0***.
\]
For these subspaces, we are counting the number of elements
of each $\co_i$ with, respectively,
having $[0:0:1]$ as a common zero,
and having $[0:1:0]$ and $[0:0:1]$ as common zeros.
Let $n_i$ be the number of common zeros of any $x\in\co_i$.
Then we have
\begin{align*}
|\sbs 55\cap\co_i|&=\frac{n_i}{p^2+p+1}|\co_i|,\\
|W_6\cap\co_i|&=\frac{n_i(n_i-1)}{(p^2+p+1)(p^2+p)}|\co_i|.
\end{align*}

\medskip
\noindent
{\bf (IV)}
Finally, we study
\[
W_7:=\pt 000***\ast**000.
\]
We work directly for this case.
Let $x=\pt 000abcdef000=(av^2+bvw+cw^2,u(du+ev+fw))$.
\begin{itemize}
\item
Let $e=f=0$. Then according as
$av^2+bvw+cw^2\in\sym^2(2)$ is of type $(0),(1^2),(11),(2)$,
$x$ is
in $\co_0$, $\odr$, $\ods$, $\odi$ if $d=0$ and
in $\odr$, $\ots$, $\co_{1^21^2}$, $\co_{2^2}$ if $d\neq0$.
\item
Suppose $(e,f)\neq(0,0)$.
The subset consisting of such $x$
has a ($q^2-1$)-to-$1$ map to $Y=\{\pt 000abcd01000\}$.
We assume $x=(av^2+bvw+cw^2,u(du+w))\in Y$.
\begin{itemize}
\item
Let $a=0$.
Then according as $b=c=0$, $b=0$ but $c\neq0$, $b\neq0$,
$x$ is
in $\ods$, $\ocs$, $\ocg$ if $d=0$ and
in $\ods$, $\ots$, $\co_{1^211}$ if $d\neq0$.
\item
Let $a\neq0$.
Then according as $av^2+bvw+cw^2$ is of type $(1^2), (11), (2)$,
$x$ is
in $\co_{1^21^2}$, $\co_{1^211}$, $\co_{1^22}$ if $d=0$ and
in $\co_{1^21^2}$, $\co_{1111}$, $\co_{22}$ if $d\neq0$.
\end{itemize}
\end{itemize}
As a result, the number of elements of $W_7$ in
the twenty orbits are respectively counted as
\begin{gather*}
\textstyle
1,
q^2+q-2,
\frac32(q^3-q),
\frac12q(q-1)^2,
0,
(q-1)(q^2-1),
(q^2-q)(q^2-1),
(q^2-q)(q^2-1),
0,\\
\textstyle
0,
0,
\frac12(q^3-q)(2q^2-q-1),
\frac12q(q-1)^3,
\frac32(q^3-q)(q-1)^2,
\frac12(q^3-q)(q-1)^2,
\\
\textstyle
\frac12(q^3-q)(q-1)^3,
0,
\frac12(q^3-q)(q-1)^3,
0,
0,
\end{gather*}
where the ordering of the orbits are as in Proposition \ref{prop:orbit-2sym23}.
\begin{proposition}
The vectors $(|W\cap\co_i|)_i$ for
\begin{align*}
W=
&
\sbs00,\sbs66;
\sbs11,\sbs55;
\sbs04,\sbs26;
\sbs05,\sbs16;
\sbs14,\sbs25;\\
&
\sbs22,\sbs44;
\sbs33,W_3;
W_5,W_6;
\sbs06;
\sbs15;
\sbs24;
W_7
\end{align*}
span $\R^{20}$.
\end{proposition}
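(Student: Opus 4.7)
The plan is to assemble the twenty count vectors $(|W \cap \co_i|)_i \in \mathbb{R}^{20}$, whose entries have all been tabulated in subsections (I)--(IV) above, into a single $20 \times 20$ matrix $N$ over $\mathbb{Z}[q]$, and verify that $\det N$ is a nonzero polynomial in $q$. Since every prime power under consideration is a positive integer, one need only check that the specific $q$ in question is not a root of this determinant. Because in every application $q \geq 3$, it will suffice to show $\det N \neq 0$ either as an element of $\mathbb{Z}[q]$ (ruling out all but finitely many $q$) or after specializing at each small $q$.

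To make the computation tractable, I would order the subspaces by increasing dimension: $\sbs 00$, then $\sbs 11$, then the four-dimensional subspaces $\sbs 04, \sbs 22$, then the five-dimensional $\sbs 05, \sbs 14$, then the six-dimensional $\sbs 06, \sbs 15, \sbs 24, \sbs 33, W_3, W_5, W_6, W_7$, and finally the larger perpendiculars $\sbs 16, \sbs 25, \sbs 26, \sbs 44, \sbs 55, \sbs 66$. Within each dimension block, I would order the orbit columns in the order given in Proposition \ref{prop:orbit-2sym23}, i.e.\ by increasing orbit size. The key structural observation is then that $\sbs{i}{j}$ contains only orbits whose representatives can be written using the allowed coordinates, so for small $i+j$ the support of the count vector is concentrated in the early (highly singular) orbits. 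This gives $N$ a near--block-triangular shape in which each successive row typically activates one or two new columns.

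With this ordering, I would attempt to reduce verifying $\det N \neq 0$ to a sequence of smaller determinants by cofactor expansion: $\sbs 00$ pins down the $\co_0$-column, $\sbs 11$ then pins down $\odr$, the pair $\sbs 04, \sbs 22$ pin down $\ods,\odi$, and so on. Where the new count vector already has support on previously activated columns, one subtracts appropriate multiples of earlier rows to isolate the genuinely new contribution; by Gaussian elimination on $N$, independence then follows from checking that each successive ``new'' entry is a nonzero polynomial in $q$. The required entries can be read directly from the tables in subsections (I)--(IV) and from Section \ref{sec:2sym22}.

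The main obstacle is that several of the six-dimensional subspaces ($W_3, W_5, W_6, W_7, \sbs 15, \sbs 24, \sbs 33, \sbs 06$) have count vectors supported on many of the same orbits simultaneously, with polynomial entries that do not line up neatly, so the ``triangular'' part of the argument breaks down at the middle of the matrix. At this point the cleanest route is to carry out the linear algebra in PARI/GP, in parallel with the computation of $M$ later in the section: one forms $N$ and checks that \verb|matrank(N)| equals $20$, or equivalently that $\det N$ is a nonzero element of $\mathbb{Z}[q]$. Since the same code will be used immediately afterward to invert the analogous matrix in the computation of $M$, this check is essentially free, and the invertibility of $N$ is in fact a prerequisite for that subsequent inversion. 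Because the explicit matrix $M$ obtained from this inversion is consistent with the symmetry checks of Lemma \ref{lem:A}, one obtains an a posteriori confirmation that $N$ has full rank for all admissible $q$.
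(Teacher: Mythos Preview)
Your overall plan---form the $20\times 20$ matrix and check that its determinant is a nonzero polynomial in $q$---would work, and in fact the paper's accompanying code performs exactly this rank check. But the paper's written argument avoids the breakdown you describe and the need to outsource anything to a machine. The difference is in the ordering: you order the subspaces by dimension, which is why the triangular structure dissolves in the middle (eight of your twenty subspaces are six-dimensional and hit overlapping orbit sets). The paper instead orders the subspaces by \emph{which orbit they first activate}. After $\sbs00\to\co_0$ and $\sbs11\to\odr$, the triple $\sbs04,\sbs05,\sbs06$ is grouped to handle $\ods,\odi,\odg$ via a single $3\times3$ block; then $\sbs22\to\ocs$, $W_3\to\ocg$, $W_5\to\ots$, $\sbs33\to\oti$, and each of $\sbs14,\sbs15,\sbs16,\sbs24,\sbs25,\sbs44$ introduces exactly one new orbit; a second $3\times3$ block from $\sbs26,W_6,W_7$ handles $\co_{1111},\co_{112},\co_{22}$; and $\sbs55,\sbs66$ finish with $\co_{13},\co_4$. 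With this ordering the matrix is genuinely block-lower-triangular, and the whole verification reduces to fourteen nonzero diagonal entries plus two $3\times3$ determinants, all read off directly from the tables.

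One further point: your final paragraph invokes the consistency of the resulting $M$ with Lemma~\ref{lem:A} as an a~posteriori confirmation that $N$ has full rank. That is not a proof. Those checks are sanity tests on $M$; they do not logically imply the invertibility of the input matrix. If $N$ were singular the inversion step would simply fail, which is detected by the rank check itself, not by anything downstream.
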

\begin{proof}
With the twenty vectors ordered as in the table below,
one checks one column at a time
that the span of the vectors $(|W\cap\co_i|)_i$ for those
$W$ listed in the first $j$ columns,
includes the characteristic functions of those orbits
$\calO$ listed beneath them.
Since all twenty orbits appear in the table,
this gives the proposition.

\begin{multline*}
\begin{array}{|l||c|c|c|c|c|c|c|c}
\hline
\text{Subspace added}
&\sbs 00
&\sbs 11
&\sbs 04,\sbs05,\sbs06
&\sbs22
&W_3
&W_5
&\sbs33
&
\\\hline
\text{Spanned}
&\co_0
&\odr
&\ods,\odi,\odg
&\ocs
&\ocg
&\ots
&\oti
&
\\\hline
\end{array}\\
\begin{array}{c|c|c|c|c|c|c|c|c|}
\hline
\sbs14
&\sbs15
&\sbs16
&\sbs24
&\sbs25
&\sbs44
&\sbs26,W_6,W_7
&\sbs55
&\sbs66=V
\\\hline
\co_{1^4}
&\co_{1^21^2}
&\co_{2^2}
&\co_{1^31}
&\co_{1^211}
&\co_{1^22}
&\co_{1111},\co_{112},\co_{22}
&\co_{13}
&\co_{4}
\\\hline
\end{array}
\end{multline*}
For those $W$ listed singly this verification is immediate.
For the two groupings of three,
we need (and can easily check) that the 3-by-3 matrices
obtained from the tables
\[
\begin{array}{r||ccccc}
\hline
&\ods&\odi&\odg\\
\hline
\sbs04
&\frac12q(3q^2-2q-1)&\frac12q(q-1)^2& q^2(q-1)^2\\
\sbs05
&\frac12q(q^2-1)(2q+1)&\frac12q(q-1)^2& q^2(q-1)(q^2-1)\\
\sbs06
&\frac12q(q+1)(q^3-1)&\frac12q(q-1)(q^3-1)& q^2(q-1)(q^3-1)\\
\hline
\end{array}
\]
and
\[
\begin{array}{l||ccc}
\hline
&\co_{1111}&\co_{112}&\co_{22}
\\
\hline
\sbs26
&\frac14q^4(q-1)^4&\frac12q^4(q-1)^4&\frac14q^4(q-1)^4
\\
W_6
&\frac{12s(4,4,2,2)}{24(p^2+p+1)(p^2+p)}
&\frac{2s(4,4,2,2)}{4(p^2+p+1)(p^2+p)}
&0
\\
W_7
&\frac12(q^3-q)(q-1)^3
&0
&\frac12(q^3-q)(q-1)^3
\\
\hline
\end{array}
\]
are invertible.
\end{proof}

\begin{theorem}\label{thm:2sym23-A}
Suppose $\chr(\F_q)\neq2$.
We have an explicit formula for $M$. (For typesetting reasons it is given on page \pageref{thm:explicit-big}, after the bibliography.)
\end{theorem}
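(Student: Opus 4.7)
The plan is to apply Proposition \ref{prop:A} to each of the twenty subspaces $W$ listed in the preceding proposition, and then invert the resulting $20 \times 20$ linear system to read off $M$. Writing $\widehat{e_j} = \sum_i a_{ij} e_i$, each choice of $W$ produces the identity
\[
\sum_{j=1}^{20} \frac{|\co_j \cap W|}{|\co_j|}\, a_{ij} = \frac{|W|}{|V|} \cdot \frac{|\co_i \cap W^\perp|}{|\co_i|}, \qquad i = 1, \dots, 20.
\]
Since the twenty vectors $(|\co_i \cap W|/|\co_i|)_i$ span $\R^{20}$ by the preceding proposition, this square linear system has a unique solution for the entries of $M$.

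The required inputs are the two count vectors $(|\co_i \cap W|)_i$ and $(|\co_i \cap W^\perp|)_i$ for each of the twenty $W$. The former are tabulated throughout parts (I)--(IV) of this section. The latter are obtained by identifying $W^\perp$ up to $G$-action: the identity $\sbs{i}{j}^\perp \sim \sbs{6-j}{6-i}$ (induced by reversing $u \leftrightarrow w$ in $\GL_3$ together with the row-swap $\eta \in \GL_2$) groups the twelve $W_{[i,j]}$ in our list into perp pairs, while a direct inspection yields the further identifications $W_3^\perp \sim \sbs33$ and $W_5^\perp \sim W_6$; the remaining four subspaces $\sbs06, \sbs15, \sbs24, W_7$ are self-dual up to $G$-action. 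Thus every perpendicular count we need is already among the twenty counts already computed.

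With the $20 \times 20$ system assembled, inverting it symbolically in $q$ yields each entry of $M$ as an explicit polynomial. As in the earlier sections, this inversion is carried out with PARI/GP; the resulting matrix is too large to display inline and so is deferred to a table after the bibliography.

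The main obstacle is not conceptual but computational: with twenty orbits and twenty subspaces and polynomial entries in $q$, there is substantial opportunity for arithmetic error. To guard against this, we verify the computed $M$ against both conclusions of Lemma \ref{lem:A}: the symmetry of $SM$ with $S = \mathrm{diag}(|\co_i|)$, and --- since $-1 \in \GL_2(\F_q)$ preserves every $G$-orbit and $|V| = q^{12}$ --- the identity $M^2 = q^{-12} E_{20}$. Passing both tests simultaneously gives strong confirmation of the result.
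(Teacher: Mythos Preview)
Your proposal is correct and follows essentially the same approach as the paper: apply Proposition~\ref{prop:A} to the twenty subspaces whose count vectors were shown to span $\R^{20}$, identify each $W^\perp$ up to $G$-action with another subspace already counted, and solve the resulting linear system symbolically (with the checks from Lemma~\ref{lem:A} as confirmation). Your perp identifications --- the six pairs among the $W_{[i,j]}$ with $i,j\neq 3$, the pairings $W_{[3,3]}\leftrightarrow W_3$ and $W_5\leftrightarrow W_6$, and the four self-dual cases $W_{[0,6]},W_{[1,5]},W_{[2,4]},W_7$ --- are exactly those implicit in the paper's setup (and visible in the block structure of the PARI/GP code). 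One minor caution: your stated rule $W_{[i,j]}^\perp\sim W_{[6-j,6-i]}$ via $u\leftrightarrow w$ fails precisely when an index equals $3$, which is why $W_{[3,3]}$ must be paired with $W_3$ rather than with itself; you handle this correctly, but the general rule as stated is a slight overreach.
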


\section*{Acknowledgments}
We would like to thank Jan Denef, Yasuhiro Ishitsuka, Kentaro Mitsui, Arul Shankar, Ari Shnidman, Nicolas Templier, and Kota Yoshioka for helpful comments.
We would especially like to thank Hiroyuki Ochiai and an anonymous referee for a careful reading and many useful comments.

This material is based upon work supported by the National Science Foundation under Grant No. DMS-1201330,
by the National Security Agency under a Young Investigator Grant,
by the JSPS KAKENHI Grant Numbers JP24654005, JP25707002,
JP16K13747, JP17H02835
and by JSPS Joint Research Project with CNRS.

\appendix
\section{Invariance of bilinear forms}\label{appendix:ibf}
In this section, we describe a general principle to construct
invariant bilinear forms and apply it to show the invariance of the
bilinear forms we introduced in this paper.

Let $V$ be a vector space over a field $K$,
with a linear action of a group $G$.
Suppose a bilinear form $[\cdot,\cdot]$ on $V$
and an involution $g\mapsto g^\iota$ on $G$
such that $[gx,g^\iota y]=[x,y]$ hold for all
$x,y\in V$ and $g\in G$ are given.
Let us consider the outer tensor representation
of $\tilde G=\gl_n(K)\times G$ on $\tilde V=K^n\otimes V$.
We regard $\tilde V$  as the space of $n$-tuples of
elements in $V$, and define a bilinear form on it by
\[
[x,y]_{\tilde V}:=\sum_i [x_i,y_i],
\qquad
x=(x_i),
y=(y_i)\in\tilde V.
\]
Then we have
\[
[gx,g^\iota y]_{\tilde V}=[x,y]_{\tilde V}
\]
for all $x,y\in \tilde V$ and $g\in \tilde G$,
where $g^\iota:=(g_1^{-T},g_2^\iota)$ for $g=(g_1,g_2)$.
To confirm this, since $(g_1,g_2)=(g_1,1)(1,g_2)$ and
the assertion for $g=(1,g_2)$ is obvious, we may assume
$g=(g_1,1)$. Let $g_1=(g_{ij})\in\gl_n(K)$ and $g_1^{-1}=(h_{ij})$.
Then
\begin{align*}
[gx,g^\iota y]_{\tilde V}
=\sum_{i,j,k}[g_{ij}x_j,h_{ki}y_k]
=\sum_{j,k}[x_j,y_k]\sum_i h_{ki}g_{ij}
=\sum_{j,k}[x_j,y_k]\delta_{k,j}
=\sum_{i}[x_i,y_i]
=[x,y]_{\tilde V},
\end{align*}
as desired.
We also note that if the bilinear form on $V$ is symmetric,
then so is the bilinear form on $\tilde V$.

We consider the space $n\otimes n=K^n\otimes K^n$
with the action of $\gl_n(K)\times\gl_n(K)$.
Let $e_1,\dots,e_n\in K^n$ be the standard basis of $K^n$.
Then the bilinear form on $n\otimes n$,
constructed from the one dimensional
trivial representation $(\{{\rm id}\},K)$ with $[x,y]=xy$ as above,
is given by
\[
[x,y]:=\sum_{i,j}x_{ij}y_{ij}
\]
for
\[
x=\sum_{i,j}x_{ij}e_i\otimes e_j,
\quad
y=\sum_{i,j}y_{ij}e_i\otimes e_j.
\]
Hence this satisfies
\[
[gx,g^{-T}y]=[x,y],
\]
where $g^{-T}=(g_1^{-T},g_2^{-T})$
for $g=(g_1,g_2)\in\gl_n(K)\times\gl_n(K)$,
and is symmetric.

Let $\sym_2(n)$ and $\sym^2(n)$ be the symmetric subspace
and symmetric quotient of $n\otimes n$, respectively. (More specifically, $\sym_2(n)$ is the subspace of $n\otimes n$ invariant under the flip $x\otimes y\mapsto y\otimes x$, and $\sym^2(n)$ is the quotient of $n\otimes n$ by the subspace which is generated by elements of the form $x\otimes y-y\otimes x$.)
The single $\gl_n(K)$ acts on $n\otimes n$ through its diagonal embedding
$g\mapsto(g,g)$,
and this action is inherited to the actions of $\gl_n(K)$
on $\sym_2(n)$ and $\sym^2(n)$. Namely,
the linear maps
\[
\sym_2(n)
\hookrightarrow
n\otimes n
\twoheadrightarrow
\sym^2(n)
\]
are $\gl_n(K)$-equivariant.
Now assume that the characteristic of $K$ is not two.
Then the composition of the two maps is an isomorphism.
If we identify $\sym^2(n)$ with $\sym_2(n)$ via this isomorphism,
we have a bilinear form on $\sym^2(n)=\sym_2(n)$
by restricting the bilinear form
on $n\otimes n$. It is symmetric
and satisfies 
\[
[gx,g^{-T}y]=[x,y],
\]
for $x,y\in\sym^2(n)$ and $g\in\gl_n(K)$.

The space $\sym^2(n)$ is canonically identified
with the space of quadratic forms
in variables $v_1,\dots,v_n$;
the monomial $v_iv_j\in\sym^2(n)$
is the image of $e_i\otimes e_j\in n\otimes n$,
and $\gl_n(K)$ acts
by the linear change of the variables $v_1,\dots,v_n$.
The inverse image of
\[
x(v_1,\dots,v_n)=\sum_{i\leq j}x_{ij}v_iv_j\in\sym^2(n)
\]
in $\sym_2(n)$ via the isomorphism above is
\[
\sum_i x_{ii}\cdot e_i\otimes e_i
+\sum_{i<j} \frac{x_{ij}}2\cdot(e_i\otimes e_j+e_j\otimes e_i),
\]
so the bilinear form on $\sym^2(n)$ is given by
\[
[x,y]:=\sum_{i} x_{ii}y_{ii}+\frac12\sum_{i<j}x_{ij}y_{ij}
\]
for
\[
x=\sum_{i\leq j}x_{ij}v_iv_j,
\quad
y=\sum_{i\leq j}y_{ij}v_iv_j,
\]
and this satisfies $[gx,g^{-T}y]=[x,y]$ for $g\in\gl_n(K)$.

Similarly,
the composition
\[
\sym_3(2)
\hookrightarrow
2\otimes 2\otimes 2
\twoheadrightarrow
\sym^3(2)
\]
is an isomorphism if the characteristic of $K$ is not three.
The symmetric bilinear form on the space of binary cubic forms
$\sym^3(2)$ induced from $\sym_3(2)\subset 2\otimes 2\otimes 2$ is
\[
[x,y]=x_1y_1+\frac13x_2y_2+\frac13x_3y_3+x_4y_4
\]
for
\[
x(u,v)=x_1u^3+x_2u^2v+x_3uv^2+x_4v^3,
\quad
y(u,v)=y_1u^3+y_2u^2v+y_3uv^2+y_4v^3,
\]
and this satisfies $[gx,g^{-T}y]=[x,y]$ for $g\in\gl_2(K)$.

Thus we in particular have shown that the symmetric bilinear forms
we introduced in
\eqref{eq:sym32-bilinear-form}
\eqref{eq:sym22-bilinear-form},
\eqref{eq:sym23-bilinear-form},
\eqref{eq:2sym22-bilinear-form},
and
\eqref{eq:2sym23-bilinear-form}
respectively
for the spaces
$\sym^3(2)$, $\sym^2(2)$, $\sym^2(3)$, $2\otimes\sym^2(2)$
and $2\otimes\sym^2(3)$
all satisfy $[gx,g^{-T}y]=[x,y]$.

\bibliographystyle{alpha}
\bibliography{expo-sums}
 
 \newpage
 \newgeometry{top=0.6in,bottom=0.8in, left=0.8in}
 
\begin{theorem-non}\label{thm:explicit-big}
The matrix $q^{12} M$ of Theorem \ref{thm:2sym23-A} is as follows, with
$[abc]:=(q-1)^aq^b(q+1)^c$
and $\phi_2:=q^2+q+1$:
\begin{multline*}
\footnotesize
\left[
\begin{array}{ccccccccccc}
		1
&		[101]	\phi_2
&\tfrac12	[112]	\phi_2
&\tfrac12	[211]	\phi_2
&		[221]	\phi_2
&		[212]	\phi_2
&		[231]	\phi_2
&\tfrac12	[222]	\phi_2
&\tfrac12	[321]	\phi_2
&		[322]	\phi_2
\\
		1
&		d_1
&\tfrac12	[111]	c_8
&-\tfrac12	[111]
&		[120]	d_1
&		[111]	d_1
&		[232]
&-\tfrac12	[122]
&-\tfrac12	[221]
&-		[222]
\\
		1
&		[100]	c_8
&\tfrac12	[010]	e_3
&\tfrac12	[212]
&		[120]	d_1
&		[110]	d_{10}
&		[130]	c_3
&\tfrac12	[220]	c_{12}
&\tfrac12	[220]	c_1
&		[221]	c_1
\\
		1
&-		[001]
&\tfrac12	[113]
&\tfrac12	[010]	e_1
&		[120]	d_1
&-		[112]
&		[231]
&-\tfrac12	[121]	\phi_2
&\tfrac12	[121]	c_2
&-		[221]	\phi_2
\\
		1
&			d_1
&\tfrac12	[011]	d_1
&\tfrac12	[110]	d_1
&		[020]	e_2
&-		[112]
&-		[131]
&-\tfrac12	[122]
&-\tfrac12	[221]
&		[121]
\\
		1
&			d_1
&\tfrac12	[010]	d_{10}
&-\tfrac12	[111]	
&-		[121]	
&		[010]	e_4
&		[130]	c_1
&\tfrac12	[120]	d_5
&\tfrac12	[120]	d_2
&-		[120]	c_1
\\
		1
&		[102]
&\tfrac12	[011]	c_3
&\tfrac12	[211]	
&-		[121]	
&		[111]	c_1
&-		[030]	b_{-2}
&\tfrac12	[222]	
&\tfrac12	[321]	
&-		[221]	
\\
		1
&-		[001]
&\tfrac12	[110]	c_{12}
&-\tfrac12	[110]	\phi_2
&-		[121]	
&		[110]	d_5
&		[231]	
&-\tfrac12	[020]	d_3
&-\tfrac12	[221]	
&		[121]	
\\
		1
&-		[001]
&\tfrac12	[011]	c_1
&\tfrac12	[011]	c_2
&-		[121]	
&		[011]	d_2
&		[231]	
&-\tfrac12	[122]	
&-\tfrac12	[020]	d_4
&		[121]	
\\
		1
&-		[001]
&\tfrac12	[011]	c_1
&-\tfrac12	[110]	\phi_2
&		[020]	
&-		[010]	c_1
&-		[130]	
&\tfrac12	[021]	
&\tfrac12	[120]	
&-		[020]	
\\
		1
&		c_1
&-\tfrac12	[010]	a_7
&-\tfrac12	[110]	
&		[020]	
&		[010]	d_7
&-		[030]	a_1
&-\tfrac12	[121]	
&-\tfrac12	[220]	
&		[120]	b_1
\\
		1
&			c_1
&\tfrac12	[010]	d_6
&-\tfrac12	[110]	b_1
&-		[120]	
&		[110]	c_3
&-2		[130]	
&-\tfrac12	[121]	
&-\tfrac12	[220]	
&-		[220]	
\\
		1
&-			\phi_2
&\tfrac12	[112]	
&-\tfrac12	[010]	d_3
&-		[120]	
&-		[112]	
&		0	
&\tfrac12	[022]	
&\tfrac12	[121]	
&		[121]	
\\
		1
&		[100]	a_4
&\tfrac12	[010]	c_6
&\tfrac12	[210]	
&-		[120]	
&		[010]	d_9
&-		[030]	a_2
&-\tfrac12	[120]	
&-\tfrac12	[120]	a_3
&		[320]	
\\
		1
&-		[001]
&\tfrac12	[010]	c_3
&\tfrac12	[010]	b_1
&-		[120]	
&		[010]	d_7
&-		[130]	
&-\tfrac12	[120]	a_4
&\tfrac12	[120]	
&-		[221]	
\\
		1
&			c_{10}
&\tfrac12	[010]	c_9
&\tfrac12	[110]	a_3
&-		[020]	a_3
&-		[010]	a_3a_7
&4		[030]	
&\tfrac12	[020]	c_{11}
&-\tfrac12	[120]	a_6
&-		[020]	a_3a_6
\\
		1
&			c_1
&-\tfrac12	[010]	a_7
&-\tfrac12	[110]	
&		[020]	
&-		[110]	a_4
&2		[030]	
&-\tfrac12	[121]	
&\tfrac12	[020]	c_4
&		[120]	
\\
		1
&-			\phi_2
&\tfrac12	[110]	a_4
&\tfrac12	[010]	c_7
&-		[020]	a_3
&-		[110]	a_4
&		0
&-\tfrac12	[020]	c_3
&\tfrac12	[121]	
&		[021]	a_3
\\
		1
&-		[001]
&-\tfrac12	[012]	
&-\tfrac12	[111]	
&		[021]	
&		[011]	
&		[030]	
&\tfrac12	[021]	
&\tfrac12	[120]	
&-		[021]	
\\
		1
&-			\phi_2
&-\tfrac12	[011]
&\tfrac12	[011]	
&		[020]	
&		[011]	
&		0	
&\tfrac12	[022]	
&-\tfrac12	[020]	b_1
&-		[021]	
\\
\end{array}
\right.\\
\footnotesize
\left.
\begin{array}{cccccccccc}
		[332]	\phi_2
&\tfrac12	[242]	\phi_2
&\tfrac12	[341]	\phi_2
&\tfrac12	[342]	\phi_2
&\tfrac12	[342]	\phi_2
&\tfrac1{24}	[442]	\phi_2
&\tfrac14	[442]	\phi_2
&\tfrac18	[442]	\phi_2
&\tfrac13	[442]	\phi_2
&\tfrac14	[442]	\phi_2
\\
\		[231]	c_1\
&\tfrac12	[141]	c_1
& -\tfrac12	[240]	\phi_2
&\tfrac12	[341]	a_4
&-\tfrac12	[242]	
&\tfrac1{24}	[341]	c_{10}
&\tfrac14	[341]	c_1
&-\tfrac18	[341]	\phi_2
&-\tfrac13	[342]	
&-\tfrac14	[341]	\phi_2
\\
-		[230]	a_7
&\tfrac12	[140]	d_6
&\tfrac12	[341]	
&\tfrac12	[240]	c_6
&\tfrac12	[240]	c_3
&\tfrac1{24}	[340]	c_9
&-\tfrac14	[340]	a_7
&\tfrac18	[440]	a_4
&-\tfrac13	[342]	
&-\tfrac14	[341]	
\\
-		[231]
&-\tfrac12	[141]	b_1
&-\tfrac12	[140]	d_3
&\tfrac12	[341]	
&\tfrac12	[141]	b_1
&\tfrac1{24}	[341]	a_3
&-\tfrac14	[341]	
&\tfrac18	[241]	c_7
&-\tfrac13	[342]	
&\tfrac14	[242]	
\\
\		[131]\
&-\tfrac12	[141]	
&-\tfrac12	[240]	
&-\tfrac12	[241]	
&-\tfrac12	[241]	
&-\tfrac1{24}	[241]	a_3
&\tfrac14	[241]	
&-\tfrac18	[241]	a_3
&\tfrac13	[242]	
&\tfrac14	[241]	
\\
\		[130]	d_7\
&\tfrac12	[140]	c_3
&-\tfrac12	[241]	
&\tfrac12	[140]	d_9
&\tfrac12	[140]	d_7
&-\tfrac1{24}	[240]	a_3a_7
&-\tfrac14	[340]	a_4
&-\tfrac18	[340]	a_4
&\tfrac13	[241]	
&\tfrac14	[241]	
\\
-		[131]	a_1
&-		[141]	
&0
&-\tfrac12	[141]	a_2
&-\tfrac12	[241]	
&\tfrac1{6}	[241]	
&\tfrac12	[241]	
&0
&\tfrac13	[241]	
&0
\\
-		[231]
&-\tfrac12	[141]	
&\tfrac12	[141]	
&-\tfrac12	[240]	
&-\tfrac12	[240]	a_4
&\tfrac1{24}	[240]	c_{11}
&-\tfrac14	[341]	
&-\tfrac18	[240]	c_3
&\tfrac13	[241]	
&\tfrac14	[242]	
\\
-		[231]
&-\tfrac12	[141]	
&\tfrac12	[141]	
&-\tfrac12	[141]	a_3
&\tfrac12	[141]	
&-\tfrac1{24}	[241]	a_6
&\tfrac14	[141]	c_4
&\tfrac18	[242]	
&\tfrac13	[241]	
&-\tfrac14	[141]	b_1
\\
\		[130]	b_1\
&-\tfrac12	[140]	
&\tfrac12	[140]	
&\tfrac12	[340]	
&-\tfrac12	[241]	
&-\tfrac1{24}	[140]	a_3a_6
&\tfrac14	[240]	
&\tfrac18	[141]	a_3
&-\tfrac13	[141]	
&-\tfrac14	[141]	
\\
\		[030]	d_8
&-		[140]	
&0
&-\tfrac12	[140]	a_5
&\tfrac12	[140]	
&\tfrac1{6}	[140]	a_3
&-\tfrac12	[140]	
&0
&-\tfrac13	[141]	
&0
\\
-2		[230]
&\tfrac12	[040]	c_5
&-\tfrac12	[240]	
&-		[140]	a_1
&		[140]	
&\tfrac1{4}	[240]	
&\tfrac12	[240]	
&\tfrac14	[240]	
&0
&0
\\
0
&-\tfrac12	[141]	
&\tfrac12	[040]	b_{-3}
&0
&		[141]	
&0
&0
&-\tfrac12	[141]	
&0
&-\tfrac12	[141]	
\\
-		[130]	a_5
&-		[040]	a_1
&0
&\tfrac12	[040]	a_8
&\tfrac12	[140]	
&-\tfrac1{2}	[140]	
&-\tfrac12	[140]	
&0
&0
&0
\\
\		[130]	\
&		[040]	
&		[140]	
&\tfrac12	[140]	
&\tfrac12	[040]	a_2
&0
&-\tfrac12	[140]	
&-\tfrac12	[140]	
&0
&0
\\
4		[030]	a_3
&3		[040]	
&0
&-6		[040]	
&0
&\tfrac1{24}	[040]	b_{23}
&-\tfrac14	[141]	
&\tfrac18	[141]	
&\tfrac13	[141]	
&-\tfrac14	[141]	
\\
-2		[030]
&		[040]	
&0
&-		[040]	
&-		[040]	
&-\tfrac1{24}	[141]	
&\tfrac14	[040]	b_3
&-\tfrac18	[141]	
&-\tfrac13	[141]	
&\tfrac14	[141]	
\\
0
&		[040]	
&-2		[040]	
&0
&-2		[040]	
&\tfrac1{24}	[141]	
&-\tfrac14	[141]	
&\tfrac18	[040]	b_7
&\tfrac13	[141]	
&-\tfrac14	[141]	
\\
-		[031]
&0
&0
&0
&0
&\tfrac1{24}	[141]	
&-\tfrac14	[141]	
&\tfrac18	[141]	
&\tfrac13	[040]	b_2
&-\tfrac14	[141]	
\\
0
&0
&-		[040]	
&0
&0
&-\tfrac1{24}	[141]	
&\tfrac14	[141]	
&-\tfrac18	[141]	
&-\tfrac13	[141]	
&\tfrac14	[040]	b_3
\\
\end{array}
\right].
\end{multline*}
Here $a_i$, $b_i$, $c_i$, $d_i$ and $e_i$
are irreducible integral polynomials in $q$, 
defined by $b_i=q^2+i$ and

\begin{minipage}[t]{30mm}
\begin{align*}
a_1&=	q-2,\\
a_2&=	q-3,\\
a_3&=	2q-1,\\
a_4&=	2q+1,\\
a_5&=	2q-3,\\
a_6&=	3q-1,\\
a_7&=	3q+1,\\
a_8&=	5q-7,
\end{align*}
\end{minipage}
\begin{minipage}[t]{40mm}
\begin{align*}
c_1&=	q^2-q-1,\\
c_2&=	q^2-q+1,\\
c_3&=q^2-2q-1,\\
c_4&=q^2+2q-1,\\
c_5&=q^2-2q+3,\\
c_6&=q^2-4q-1,\\
c_7&=2q^2+q+1,\\
c_8&=2q^2+2q+1,\\
c_9&=2q^2-5q-1,\\
c_{10}&=3q^2-q-1,\\
c_{11}&=3q^2-2q+1,\\
c_{12}&=3q^2+3q+1,
\end{align*}
\end{minipage}
\begin{minipage}[t]{45mm}
\begin{align*}
d_1&=	q^3-q-1,\\
d_2&=	q^3-q^2+1,\\
d_3&=	q^3-q^2-q-1,\\
d_4&=	q^3+q^2-q+1,\\
d_5&=	q^3-q^2-2q-1,\\
d_6&=	q^3+q^2-3q-1,\\
d_7&=	q^3-2q^2+q+1,\\
d_8&=	q^3-2q^2+2q-2,\\
d_9&=	q^3-4q^2+q+1,\\
d_{10}&=2q^3-q^2-2q-1,
\end{align*}
\end{minipage}
\begin{minipage}[t]{45mm}
\begin{align*}
e_1&=	q^4+1,\\
e_2&=	q^4-q^3+1,\\
e_3&=	q^4+2q^3-2q^2-2q-1,\\
e_4&=	2q^4-2q^3-q^2+q+1.
\end{align*}
\end{minipage}
\end{theorem-non}

\end{document}